\newtheorem{prop}{Proposition}[section]
\newtheorem{thm}[prop]{Theorem}
\newtheorem{cor}[prop]{Corollary}
\newtheorem{defn}[prop]{Definition}
\newtheorem{rem}[prop]{Remark}
\newtheorem{lem}[prop]{Lemma}
\newcommand{\N}{\mathbb{N}}
\numberwithin{equation}{section}
\begin{document}

\title{Artinianness and Finiteness  of Formal Local Cohomology Modules with Respect to a Pair of Ideals}
\author{T.H. Freitas $^{1,}\,$\thanks{Work partially supported by FAPESP-Brazil - Grant
2012/01084-0 and 2013/20723-7. }\,\,\,\,and\,\,\, V. H. Jorge P\'erez $^{2}$
\thanks{Work
partially supported by CNPq-Brazil - Grant
245872/2012-4 and FAPESP-Brazil - Grant
2012/20304-1.  2000 Mathematics Subject
Classification: 13D45. {\it Key words}: Local Cohomology, Formal Local Cohomology, Artinian. }}

\date{}
\maketitle

\noindent $^1$ Universidade de S{\~a}o Paulo -
ICMC, Caixa Postal 668, 13560-970, S{\~a}o Carlos-SP, Brazil ({\it
e-mail: tfreitas@icmc.usp.br }).

\vspace{0.3cm}
\noindent $^2$ Universidade de S{\~a}o Paulo -
ICMC, Caixa Postal 668, 13560-970, S{\~a}o Carlos-SP, Brazil ({\it
e-mail: vhjperez@icmc.usp.br}).

\vspace{0.3cm}
\begin{abstract}
Let $(R,\mathfrak{m})$ be a commutative Noetherian local ring, $M$ be a finitely generated $R$-module and  $\mathfrak{a}$, $I$ and $J$  be ideals of $R$. We investigate the structure of formal local cohomology modules of  $\mathfrak{F}^i_{\mathfrak{a},I,J}(M)$ and $\check{\mathfrak{F}}^i_{\mathfrak{a},I,J}(M)$  with respect to a pair of ideals, for all $i\geq 0$. The main subject of the paper is to study the finiteness properties and Artinianness of $\mathfrak{F}^i_{\mathfrak{a},I,J}(M)$ and $\check{\mathfrak{F}}^i_{\mathfrak{a},\mathfrak{m},J}(M)$. We  study the maximum and minimum integer $i\in \N$ such that $\mathfrak{F}^i_{\mathfrak{a},\mathfrak{m},J}(M)$ and $\check{\mathfrak{F}}^i_{\mathfrak{a},\mathfrak{m},J}(M)$ are not Artinian.  We obtain some results involving cossuport, coassociated and attached primes for formal local cohomology modules with respect to a pair of ideals. Also, we  give an criterion involving the concepts of finiteness and vanishing of formal local cohomology modules and \v{C}ech-formal local cohomology modules with respect to a pair of ideals.
\end{abstract}
\newpage
\maketitle
\section{Introduction}
\hspace{0.5cm}Throughout this paper $R$ is a commutative Noetherian  ring (non-zero identity),  $\mathfrak{a},\mathfrak{b},I$ and $J$ are ideals of $R$, and $M$ be a non-zero finitely generated $R$-module. For $i \in \mathbb{N}$, $H^i_{\mathfrak{a}}(M)$ denote the $i$th  local cohomology module of $M$ with respect to $\mathfrak{a}$ (see \cite{grot}, \cite{Hart}, \cite{24h}). This concept is an important tool in algebraic geometry and commutative algebra, and has been studied by several authors.

When $(R, \mathfrak{m})$ is a local ring, Schenzel \cite{art0}  defined an object of study as follows. Let $\underline{x}= x_1,\ldots,x_r$  be a system of elements of $R$ with $\mathfrak{b}={\rm Rad}(\underline{x}R)$, and $\check{C}_{\underline{x}}$ denote the \v{C}ech complex of $R$ with respect to $\underline{x}$. The projective system of $R$-modules $\{M / \mathfrak{a}^n M  \}_{n\in \mathbb{N}}$ induces a projective system of $R$-complexes $\{\check{C}_{\underline{x}}\otimes  M / \mathfrak{a}^n M  \}$.
Consider the projective limit $\displaystyle \varprojlim(\check{C}_{\underline{x}}\otimes  M / \mathfrak{a}^n M)$.

For an integer $i\in \mathbb{Z}$, the cohomology module $H^{i}(\displaystyle \varprojlim(\check{C}_{\underline{x}}\otimes  M / \mathfrak{a}^n M))$ is called the $i$th $\mathfrak{a}$-formal local cohomology with respect to $\mathfrak{b}$, denoted by $\check{\mathfrak{F}}^i_{\mathfrak{a,b}}(M)$. In the case of $\mathfrak{b}= \mathfrak{m}$, we speak simply  the $i$th $\mathfrak{a}$-formal local cohomology.

Now, consider the family of local cohomology modules $\{H^i_{\mathfrak{b}}(M / \mathfrak{a}^n M)\}_{n\in \mathbb{N}} $. For every integer $n$, there is a natural homomorphism $ H^i_{\mathfrak{b}}(M / \mathfrak{a}^{n+1}M)\rightarrow H^i_{\mathfrak{b}}(M / \mathfrak{a}^n M)$ such that the family forms a projective system.
 Let the projective limit, $\mathfrak{F}^i_{\mathfrak{a,b}}(M):=\displaystyle \varprojlim H^i_{\mathfrak{b}}(M / \mathfrak{a}^n M)$. When $\mathfrak{b}=\mathfrak{m}$, Schenzel \cite{art0} has shown the following isomorphism $ \mathfrak{\check{F}}^i_{\mathfrak{a},\mathfrak{m}}(M)\cong \mathfrak{F}^i_{\mathfrak{a},\mathfrak{m}}(M)$, showing  the relation between formal local cohomology and projective limits of some local cohomology modules. An approach, but not least, was studied by  Peskine and Szpiro \cite[Chapter III]{18} and
Faltings \cite{faltings}.

Through the concept introduced in \cite{art1} of local cohomology defined by a pair of ideals, the authors \cite{paper} introduced the  notion of formal local cohomology defined by  a pair of ideals.  More explicity, for an integer $i\in \mathbb{Z}$, the cohomology module $H^i(\displaystyle \varprojlim(\check{C}_{\underline{x}, J}\otimes  M / \mathfrak{a}^n M))$ called the $i$th \v{C}ech  $\mathfrak{a}$-formal cohomology with respect to $(I,J)$, denoted by $\check{\mathfrak{F}}^i_{\mathfrak{a}, I, J}(M)$. After this, considerthe projective limit  $\displaystyle \varprojlim H^i_{I,J}(M / \mathfrak{a}^n M)$,  denoted by $\mathfrak{F}^i_{\mathfrak{a},I,J}(M)$.

Note that if $J=0$, $\check{C}_{\underline{x}, J}$ coincides with the ordinary \v{C}ech complex $\check{C}_{\underline{x}}$ with respect to $\underline{x}= x_1,\ldots,x_s$. Therefore $ \check{\mathfrak{F}}^i_{\mathfrak{a},I,0}(M)\cong  \check{\mathfrak{F}}^i_{\mathfrak{a},I}(M)$. If $J=0$ and $I=\mathfrak{m}$,  we have $ \check{\mathfrak{F}}^i_{\mathfrak{a},\mathfrak{m},0}(M)\cong  \check{\mathfrak{F}}^i_{\mathfrak{a},\mathfrak{m}}(M)$. The same conclusion we have for $\mathfrak{F}^i_{\mathfrak{a},I,J}(M)$ when $J=0$. \noindent

These new definitions are a natural generalization of $\mathfrak{a}$-formal local cohomology  with respect to $\mathfrak{b}$ and $\mathfrak{a}$-formal local cohomology, both introduced by Schenzel \cite{art0} and discussed by Mafi  \cite{131}, Asgharzadeh and Divaaani-Aazar \cite{finiteness}, Gu \cite{yan}, Bijan-Zadeh and Rezaei \cite{ffdepth1}, and Eghbali \cite{majidp}. However the isomorphism between $\mathfrak{\check{F}}^i_{\mathfrak{a},I,J}(M)$ and $\mathfrak{F}^i_{\mathfrak{a},I,J}(M)$ does not happen always, differently as the ordinary formal local cohomology.

About artinianness of the ordinary local cohomology, Huneke \cite {huneke} ask when $H^i_I(M)$ is artinian for an integer $i$. In general this question is not true (see \cite {14} and \cite{10}). In this way, some results have been achieved such as, (1) if ${\rm dim}M=d$, $H^d_I(M)$ is artinian for any finitely generated $R$-module $M$; (2) the characterization of the least integer $s$ such that $H^s_I(M)$ is not artinian, (3) $H^i_{\mathfrak{m}}(M)$ is always artinian when $(R,\mathfrak{m})$ is a local ring and $M$ finitely generated.
For local cohomology defined by a pair of ideals, Chu and Wang \cite {art3} generalized (1) and (2) and gave  other results in this context. About the generalization  of (3) in this sense, Tehranian and Talemi \cite[Theorem 2.10]{nonart} have shown that $H^i_{\mathfrak{m},J}(M)$ is not artinian for some $i\in \mathbb{N}_0$, when $J$ is a non-nilpotent ideal of a local ring $(R,\mathfrak{m})$ and $M$ a finite $R$-module. We cite \cite{parsa} for more interesting results involving artinianness of local cohomology defined by a pair of ideals.

The artinianness of formal local cohomology modules has been studied by  Gu \cite{yan}, Asgharzadeh-Divaani-Aazar \cite{art3}, Mafi \cite{131}, Bijan-Zadeh and Rezaei \cite{ffdepth1}, and Eghbali \cite{majidp}. In this context was shown that, among other results, (1) For an local ring $(R,\mathfrak{m})$ and $M$ finitely generated $R$-module of dimension $d$, $\mathfrak{\check{F}}^d_{\mathfrak{a},\mathfrak{m}}(M)$ is artinian; (2) $\mathfrak{\check{F}}^i_{\mathfrak{a},\mathfrak{m}}(M)$ is artinian for all $i<t$ if, and only if, $\mathfrak{a}\subseteq {\rm Rad}(0: \mathfrak{\check{F}}^i_{\mathfrak{a},\mathfrak{m}}(M))$ for all $i<t$; (3) If, for some integer $t$, $\check{\mathfrak{F}}^i_{\mathfrak{a}, \mathfrak{m}}(M)$ is artinian for all $i<t$, then $\check{\mathfrak{F}}^t_{\mathfrak{a}, \mathfrak{m}}(M)/ \mathfrak{a}\check{\mathfrak{F}}^t_{\mathfrak{a}, \mathfrak{m}}(M)$ is artinian; (4) some relations about ${\rm inf}\{ i \  | \  \mathfrak{F}^i_{\mathfrak{a},\mathfrak{m}}(M) \  {\rm is \  not \  artinian} \}$ .
The purpose of this paper is, using the concept of formal local cohomology with respect to a pair of ideals introduced in \cite{paper}, generalize (1), (2), (3) and give some results involving the ${\rm inf}\{ i \  | \  \mathfrak{F}^i_{\mathfrak{a},\mathfrak{m},J}(M) \  {\rm is \  not \  artinian} \}$.
Moreover, we  give a characterization of prime attached of the $\mathfrak{F}^i_{\mathfrak{a},I,J}(M)$ and some results involving the concept of cosupport and coassociated primes.

The organization of the paper is as follows. In the next section, we will set the formal filter depth and formal \v{C}ech-depth and study some elementary properties about this new concept.

After discussing basic properties about this new concept, in the Section 3 we discuss and give a clue about the non-artinianness for formal local cohomology modules with respect to a pair of ideals.

In the Section 4, we investigate the question of artinianness of formal local cohomology and \v{C}ech-formal local cohomology modules with respect to a pair of ideals. The main result of this section generalizes (1) previously cited.

In Section 5, we study some results involving the concept of cosupport and coassociated primes. Also, we give a characterization of coassociated primes of top formal local cohomology modules with respect to a pair of ideals.

In the last section, we obtain some results on finiteness of $\mathfrak{F}^i_{\mathfrak{a},\mathfrak{m},J}(M)$ and $\mathfrak{\check{F}}^i_{\mathfrak{a},\mathfrak{m},J}(M)$.  We  give an important criterion involving the concepts of finiteness and vanishing of formal local cohomology modules and \v{C}ech-formal local cohomology modules with respect to a pair of ideals.

For the other sections, assume always that $R$ is a local ring with maximal ideal $\mathfrak{m}$, denoted by $(R,\mathfrak{m})$.

\section{The Formal Filter Depth  of Formal Local Cohomology Modules }

\hspace{0.5cm} Let $M$ be a finitely generated $R$-module. Remember that a sequence $x_1,\dots,x_n$ of elements in $R$ is said to be an $M$-filter regular
sequence if,  for all  ${\mathfrak p}\in {\rm Supp}(M)\setminus{\mathfrak m}$, the sequence
$x_1/1,\dots,x_n/1$ of elements of $R_{\mathfrak p}$
is an $M$- filter regular sequence $[9]$. For an ideal
$I$ of $R$, the $f$-depth of $I$ on
$M$ is defined as the length of any maximal
$M$-filter regular sequence in $I$, denoted by
$f$-depth$(I, M)$. When a maximal
$M$-filter regular sequence in $I$ does not
exist, we understand that the length is $\infty$.

\begin{prop}\label{infimo} Let $M$ be a finite $R$-module. Let $t$ be a non-negative integer such that $\mathfrak{F}^i_{\mathfrak{a},\mathfrak{m},J}(M)$ is Artinian for all $i<t$. Then $\mathfrak{F}^i_{\mathfrak{a},\mathfrak{m},K}(M)$ is Artinian for all $K\subset J$ and all $i<t$. In particular $${\rm inf}\{ i \  | \  \mathfrak{F}^i_{\mathfrak{a},\mathfrak{m},J}(M) \  {\rm is \  not \  artinian} \}\leq f\!-\!{\rm depth}(I,M).$$
\end{prop}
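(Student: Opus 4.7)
The plan is to bootstrap from the torsion-functor level up to the derived functor and projective limit level by exploiting the containment $W(\mathfrak{m},K)\subseteq W(\mathfrak{m},J)$ whenever $K\subseteq J$, which yields a natural inclusion $\Gamma_{\mathfrak{m},K}(N)\hookrightarrow \Gamma_{\mathfrak{m},J}(N)$ for every $R$-module $N$. The argument then proceeds by induction on $i<t$, always applying the comparison termwise to the projective system $\{M/\mathfrak{a}^nM\}_n$ before passing to $\varprojlim_n$.

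In the base case $i=0$ the inclusions are compatible with the transition maps, so left exactness of $\varprojlim_n$ realizes $\mathfrak{F}^0_{\mathfrak{a},\mathfrak{m},K}(M)$ as a submodule of the Artinian module $\mathfrak{F}^0_{\mathfrak{a},\mathfrak{m},J}(M)$, hence itself Artinian. For $0<i<t$ I would reduce to the single-generator step $J=K+(x)$ and, for that case, invoke a Mayer--Vietoris-type long exact sequence for local cohomology defined by a pair of ideals (in the style of Takahashi--Yoshino--Yoshizawa and Chu--Wang \cite{art3}) relating $H^i_{\mathfrak{m},K}(-)$ and $H^i_{\mathfrak{m},J}(-)$ with an auxiliary pair-cohomology term whose projective limit is Artinian by the inductive hypothesis together with the standing Artinianness of $\mathfrak{F}^j_{\mathfrak{a},\mathfrak{m},J}(M)$ for $j\le i$. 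Since Artinian modules form a subcategory closed under subobjects, quotients, and extensions, exactness at the limit delivers Artinianness of $\mathfrak{F}^i_{\mathfrak{a},\mathfrak{m},K}(M)$, and iterating the one-generator step along a chain from $K$ to $J$ completes the main claim.

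For the ``in particular'' assertion, specialize to $K=0$. Then $\mathfrak{F}^i_{\mathfrak{a},\mathfrak{m},0}(M)\cong \mathfrak{F}^i_{\mathfrak{a},\mathfrak{m}}(M)$ is the ordinary formal local cohomology, so setting $t_J:=\inf\{i : \mathfrak{F}^i_{\mathfrak{a},\mathfrak{m},J}(M)\text{ is not artinian}\}$ the main part of the proposition forces $\mathfrak{F}^i_{\mathfrak{a},\mathfrak{m}}(M)$ to be Artinian for all $i<t_J$. The classical bound $\inf\{i : \mathfrak{F}^i_{\mathfrak{a},\mathfrak{m}}(M)\text{ is not artinian}\}\leq f\!-\!{\rm depth}(\mathfrak{a},M)$ for ordinary formal local cohomology (Asgharzadeh--Divaani-Aazar) then yields $t_J\leq f\!-\!{\rm depth}(\mathfrak{a},M)$, which is the asserted inequality.

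The main obstacle is passing the one-generator long exact sequence through $\varprojlim_n$: since inverse limits are only left exact, a $\varprojlim{}^1$ obstruction could in principle block the transfer of exactness. I expect that Artinianness of $\mathfrak{F}^j_{\mathfrak{a},\mathfrak{m},J}(M)$ for $j\leq i$ (available from the hypothesis and induction) supplies exactly the Mittag--Leffler condition required to kill the relevant $\varprojlim{}^1$ on the tower $\{H^j_{\mathfrak{m},J}(M/\mathfrak{a}^nM)\}_n$, so that the one-generator reduction survives at the level of formal local cohomology.
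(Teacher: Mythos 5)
Your base case ($i=0$) and the ``in particular'' specialization to $K=0$ both agree in substance with what the paper does (the paper cites Melkerson's criterion; the Asgharzadeh--Divaani-Aazar bound you invoke rests on the same result). The divergence — and the gap — is in the inductive step for $0<i<t$.

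The paper never compares $H^i_{\mathfrak{m},K}$ and $H^i_{\mathfrak{m},J}$ in positive degree. Instead it dimension-shifts: after reducing to the case $\Gamma_{\mathfrak{m},J}(M/\mathfrak{a}^nM)=0$, it embeds each $M/\mathfrak{a}^nM$ in an injective hull $E_n$, sets $N_n=E_n/(M/\mathfrak{a}^nM)$, uses $H^{i+1}_{\mathfrak{m},L}(M/\mathfrak{a}^nM)\cong H^i_{\mathfrak{m},L}(N_n)$ for both $L=J$ and $L=K$, and runs the induction on $t$ against the new tower $\{N_n\}$. This way the only comparison between $K$ and $J$ that is ever needed is the $\Gamma$-level inclusion, which is where the argument is actually robust.

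Your plan instead rests on a ``Mayer--Vietoris-type long exact sequence'' connecting $H^i_{\mathfrak{m},K}(-)$ and $H^i_{\mathfrak{m},J}(-)$ when $J=K+(x)$. No such sequence is established in Takahashi--Yoshino--Yoshizawa or Chu--Wang: their Mayer--Vietoris sequence varies the \emph{first} ideal of the pair at fixed $J$. Varying the second ideal only gives a morphism of $\delta$-functors $H^i_{\mathfrak{m},K}(-)\to H^i_{\mathfrak{m},J}(-)$ induced by $\widetilde{W}(\mathfrak{m},K)\subseteq\widetilde{W}(\mathfrak{m},J)$, but there is no third term making this into a long exact sequence; in degree $0$ it is a monomorphism, and in higher degrees it is neither injective nor surjective in general. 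So the one-generator reduction has nothing to pass through. Moreover, even granting such a sequence, your proposed resolution of the $\varprojlim^1$ obstruction does not work: Artinianness of the limit $\mathfrak{F}^j_{\mathfrak{a},\mathfrak{m},J}(M)$ gives no control on the tower $\{H^j_{\mathfrak{m},J}(M/\mathfrak{a}^nM)\}_n$ itself (the individual terms need not satisfy DCC and the Mittag--Leffler condition is about images of transition maps in the tower, not about the limit), so the claim that the hypothesis ``supplies exactly the Mittag--Leffler condition'' is unjustified. To repair the argument you would either need to supply a genuine comparison exact sequence between the two second ideals, or abandon the one-generator route and adopt a degree-shift along the tower, as the paper does.
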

\begin{proof} We will use induction on $t$. If $t=0$, there is nothing to prove. Let $t=1$. Since $\Gamma_{{\mathfrak m},K}(M/{\mathfrak a}^nM)\subset \Gamma_{{\mathfrak m},J}(M/{\mathfrak a}^nM)$ \cite[Proposition 1.4 (3)]{art1}, we have $\displaystyle \lim_{ ^{\longleftarrow} }\Gamma_{{\mathfrak m},K}(M/{\mathfrak a}^nM)\subset \displaystyle \lim_{ ^{\longleftarrow} }\Gamma_{{\mathfrak m},J}(M/{\mathfrak a}^nM)$  for all $K\subset J,$ the assertion holds.

Suppose $t>1$. Since $H_{{\mathfrak m},J}^i(M/{\mathfrak a}^nM)\cong H_{{\mathfrak m},J}^i\left(\frac{M/{\mathfrak a}^nM}{\Gamma_{{\mathfrak m},J}(M/{\mathfrak a}^nM)}\right)$ for all $i>0$ \cite[Corollary 1.13 (4)]{art1}, so we have

$$\displaystyle \varprojlim H_{{\mathfrak m},J}^i(M/{\mathfrak a}^nM)\cong \displaystyle \varprojlim H_{{\mathfrak m},J}^i\left(\frac{M/{\mathfrak a}^nM}{\Gamma_{{\mathfrak m},J}(M/{\mathfrak a}^nM)}\right)$$
\noindent for all $i\geq 0$. Thus we may assume that $\Gamma_{{\mathfrak m},J}(M/{\mathfrak a}^nM)=0$. Let $E$ be an injective hull of $\overline{M}:=M/{\mathfrak a}^nM$ for all $n$,  and  $N_n=E/\overline{M}$. Then, follows that
$H_{{\mathfrak m},J}^{i+1}(M/{\mathfrak a}^nM)\cong H_{{\mathfrak m},J}^i\left(N_n\right)$ and $H_{{\mathfrak m},K}^{i+1}(M/{\mathfrak a}^nM)\cong H_{{\mathfrak m},K}^i\left(N_n\right)$ for all $i\geq 0$. So
$\mathfrak{F}^{i+1}_{\mathfrak{a},\mathfrak{m},J}(M)\cong \displaystyle \varprojlim H_{{\mathfrak m},J}^i\left(N_n\right)$ and $\mathfrak{F}^{i+1}_{\mathfrak{a},\mathfrak{m},K}(M)\cong \displaystyle \varprojlim H_{{\mathfrak m},K}^i\left(N_n\right)$ for all $i\geq 0$. Thus $N_n$ satisfies our induction hypothesis. Therefore, $\mathfrak{F}^i_{\mathfrak{a},\mathfrak{m},K}(M)$
 is Artinian for all $K\subset J$ and all $i<t.$
 If $K=0$, so the last statement follows by \cite[Theorem 3.1]{16}.
 \end{proof}

The previous result motivates to consider the following definition.

\begin{defn} \label{definition} Let $\mathfrak{a}$ an ideal of the ring $(R,\mathfrak{m})$. The least integer such that formal local cohomology and \v{C}ech-formal local cohomology are not artinian are called respectively by the formal filter depth and formal \v{C}ech- filter depth with respect to a pair of ideals $(\mathfrak{m},J)$. More specifically

$$ {\rm ff}\!-\!{\rm depth}(\mathfrak{a},\mathfrak{m},J,M):= {\rm inf}\{ i \  | \  \mathfrak{F}^i_{\mathfrak{a},\mathfrak{m},J}(M) \  {\rm is \  not \  artinian} \},$$
$${\rm f{\check f}}\!-\!{\rm depth}(\mathfrak{a},\mathfrak{m},J,M):= {\rm inf}\{ i \  | \  \check{\mathfrak{F}}^i_{\mathfrak{a},\mathfrak{m},J}(M) \  {\rm is \  not \  artinian} \}.$$

\noindent Analogously, we define the greatest integer such that de formal local cohomology and \v{C}ech-formal local cohomology are not artinian by

$$ {\rm gg}\!-\!{\rm depth}(\mathfrak{a},\mathfrak{m},J,M):= {\rm sup}\{ i \  | \  \mathfrak{F}^i_{\mathfrak{a},\mathfrak{m},J}(M) \  {\rm is \  not \  artinian} \},$$
$$ {\rm g{\check g}}\!-\!{\rm depth}(\mathfrak{a},\mathfrak{m},J,M):= {\rm sup}\{ i \  | \  \check{\mathfrak{F}}^i_{\mathfrak{a},\mathfrak{m},J}(M) \  {\rm is \  not \  artinian} \}.$$

\end{defn}

This definitions are  natural generalizations of formal filter depth given in \cite[Definition 2.1]{yan} and \cite[Definition 2.11]{ffdepth1}.
By \cite[Proposition 2.4]{paper} is easy to see that
$${\rm f{\check f}}\!-\!{\rm depth}(\mathfrak{a},\mathfrak{m},J,M)\leq {\rm ff}\!-\!{\rm depth}(\mathfrak{a},\mathfrak{m},J,M),$$
$$  {\rm gg}\!-\!{\rm depth}(\mathfrak{a},\mathfrak{m},J,M)\leq  {\rm g{\check g}}\!-\!{\rm depth}(\mathfrak{a},\mathfrak{m},J,M).$$

The next result shows that the formal filter depth is invariant by radicals.
\begin{prop}\label{inv} Let $\mathfrak{a}$ and $\mathfrak{b}$ be two ideals of $(R,\mathfrak{m})$ such that $Rad(\mathfrak{a})= Rad(\mathfrak{b}).$ Then we have that ${\rm ff}\!-\!{\rm depth}(\mathfrak{a},\mathfrak{m},J,M)= {\rm ff}\!-\!{\rm depth}(\mathfrak{b},\mathfrak{m},J,M)$.
\end{prop}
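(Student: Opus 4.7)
The plan is to show the stronger statement that $\mathfrak{F}^i_{\mathfrak{a},\mathfrak{m},J}(M) \cong \mathfrak{F}^i_{\mathfrak{b},\mathfrak{m},J}(M)$ for every $i \geq 0$, from which the equality of the infima is immediate. The equivalence of the two formal local cohomology modules follows from a cofinality argument on the defining inverse systems.

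First, I would exploit the hypothesis $\mathrm{Rad}(\mathfrak{a}) = \mathrm{Rad}(\mathfrak{b})$ to produce integers $s, t \geq 1$ with $\mathfrak{a}^s \subseteq \mathfrak{b}$ and $\mathfrak{b}^t \subseteq \mathfrak{a}$. Iterating gives $\mathfrak{a}^{sn} \subseteq \mathfrak{b}^n$ and $\mathfrak{b}^{tn} \subseteq \mathfrak{a}^n$ for every $n \in \N$, and hence the canonical surjections
\[
M/\mathfrak{a}^{sn}M \twoheadrightarrow M/\mathfrak{b}^n M, \qquad M/\mathfrak{b}^{tn}M \twoheadrightarrow M/\mathfrak{a}^n M.
\]
These maps, together with the structural surjections of each inverse system, show that the projective systems $\{M/\mathfrak{a}^n M\}_{n \in \N}$ and $\{M/\mathfrak{b}^n M\}_{n \in \N}$ are mutually cofinal (in fact each is a cofinal subsystem of the other up to a re-indexing by the strictly increasing functions $n \mapsto sn$ and $n \mapsto tn$).

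Next, I would apply the functor $H^i_{\mathfrak{m},J}(-)$ to the above maps. This yields $R$-module homomorphisms between the projective systems $\{H^i_{\mathfrak{m},J}(M/\mathfrak{a}^n M)\}$ and $\{H^i_{\mathfrak{m},J}(M/\mathfrak{b}^n M)\}$ that are again compatible with the transition morphisms, and these two systems remain mutually cofinal because cofinality is preserved by any functor. Since the inverse limit of a projective system agrees with the inverse limit of any cofinal subsystem, passing to $\varprojlim$ yields
\[
\mathfrak{F}^i_{\mathfrak{a},\mathfrak{m},J}(M) \;=\; \varprojlim H^i_{\mathfrak{m},J}(M/\mathfrak{a}^n M) \;\cong\; \varprojlim H^i_{\mathfrak{m},J}(M/\mathfrak{b}^n M) \;=\; \mathfrak{F}^i_{\mathfrak{b},\mathfrak{m},J}(M)
\]
for each $i \geq 0$. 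In particular, for a given $i$, the module $\mathfrak{F}^i_{\mathfrak{a},\mathfrak{m},J}(M)$ is Artinian if and only if $\mathfrak{F}^i_{\mathfrak{b},\mathfrak{m},J}(M)$ is, so the two infima defining the formal filter depths coincide.

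The only delicate point is the cofinality step: one must check that the natural maps induced between the two cohomology systems are genuinely compatible morphisms of inverse systems, so that the cofinal-subsystem lemma for inverse limits applies. Everything else is routine once the radical condition is translated into an inclusion of powers.
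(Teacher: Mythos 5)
Your proof is correct, but it takes a genuinely different route from the paper's. The paper invokes a duality isomorphism (\cite[Theorem 6.7]{paper}) that identifies $\mathfrak{F}^i_{\mathfrak{a},\mathfrak{m},J}(M)$ with the Matlis dual $\mathrm{Hom}_R(H^{-i}_{\mathfrak{a}}(\mathrm{Hom}_R(M,D_{\underline{x},J})),E_R(\mathbb{K}))$ and then reduces everything to the well-known radical-invariance of ordinary local cohomology $H^\bullet_{\mathfrak{a}}$. Your argument instead works directly at the level of the defining inverse systems, showing $\mathfrak{F}^i_{\mathfrak{a},\mathfrak{m},J}(M)\cong\mathfrak{F}^i_{\mathfrak{b},\mathfrak{m},J}(M)$ by an interleaving (``zigzag'') argument. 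The paper's route is short because it off-loads the entire content to a nontrivial duality theorem from the companion paper; your route is self-contained and more elementary, requiring only functoriality of $H^i_{\mathfrak{m},J}(-)$ and the standard lemma on reindexing inverse limits.

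One terminological caution: the two systems $\{M/\mathfrak{a}^n M\}$ and $\{M/\mathfrak{b}^n M\}$ are not literally cofinal subsystems of each other, since neither is a subsystem of the other in the poset-theoretic sense; what you really have are compatible morphisms $M/\mathfrak{a}^{sn}M\to M/\mathfrak{b}^n M$ and $M/\mathfrak{b}^{tn}M\to M/\mathfrak{a}^n M$ whose two composites coincide with the respective transition maps. This is exactly what is needed to produce mutually inverse maps between the limits (the induced endomorphism of $\varprojlim$ coming from a strictly-increasing reindexing composed with transition maps is the identity), and after applying $H^i_{\mathfrak{m},J}(-)$ the same compatibilities persist by functoriality. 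So the substance of the argument is sound; just phrase the step as an interleaving of systems rather than cofinality of one in the other.
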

\begin{proof} By \cite[Theorem 6.7]{paper} we have that
$$\mathfrak{F}^i_{\mathfrak{a},\mathfrak{m},J}(M)\cong {\rm Hom}_R(H^{-i}_{\mathfrak{a}}({\rm Hom}_R(M,D_{\underline{x},J})), E_R(\mathbb{K})) \ \  {\rm and}$$
$$\mathfrak{F}^i_{\mathfrak{b},\mathfrak{m},J}(M)\cong {\rm Hom}_R(H^{-i}_{\mathfrak{b}}({\rm Hom}_R(M,D_{\underline{x},J})), E_R(\mathbb{K})).$$
Since the usual local cohomology is invariant by radicals, we have the statement.
\end{proof}

\begin{prop} ${\rm ff\!-\!depth}(\mathfrak{a},\mathfrak{m},J,M)= {\rm ff}\!-\!{\rm depth}(\mathfrak{a}\widehat{R},\mathfrak{m}\widehat{R},J\widehat{R},\widehat{M})$.
\end{prop}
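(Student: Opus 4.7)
The plan is to prove the stronger statement that there is an isomorphism
$$\mathfrak{F}^i_{\mathfrak{a},\mathfrak{m},J}(M)\cong \mathfrak{F}^i_{\mathfrak{a}\widehat{R},\mathfrak{m}\widehat{R},J\widehat{R}}(\widehat{M})$$
for every $i\geq 0$, and then to observe that Artinianness on one side is equivalent to Artinianness on the other. Once the sets of indices $i$ for which the two modules fail to be Artinian coincide, the two infima entering the definition of $\mathrm{ff}\text{-depth}$ must be equal.

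For the isomorphism, I would proceed as follows. Since $\widehat{R}$ is faithfully flat over $R$ and each $M/\mathfrak{a}^nM$ is a finitely generated $R$-module, the flat base change property for local cohomology with respect to a pair of ideals gives
$$H^i_{\mathfrak{m},J}(M/\mathfrak{a}^nM)\otimes_R\widehat{R}\cong H^i_{\mathfrak{m}\widehat{R},J\widehat{R}}(\widehat{M}/\mathfrak{a}^n\widehat{M})$$
for every $n$, compatibly with the transition maps. Taking $\varprojlim_n$ on both sides and interchanging the limit with the base change (either via a Mittag--Leffler argument on the inverse system, or more cleanly by passing to the Matlis-dual description $\mathfrak{F}^i_{\mathfrak{a},\mathfrak{m},J}(M)\cong\mathrm{Hom}_R(H^{-i}_{\mathfrak{a}}(\mathrm{Hom}_R(M,D_{\underline{x},J})),E_R(\mathbb{K}))$ used in Proposition~\ref{inv}, which reduces everything to ordinary local cohomology together with the identification $E_R(\mathbb{K})=E_{\widehat{R}}(\mathbb{K})$) produces the desired isomorphism.

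For the transfer of Artinianness, I would use the standard fact that any Artinian module over a Noetherian local ring is supported only at the maximal ideal, hence $\mathfrak{m}$-torsion, and that for an $\mathfrak{m}$-torsion module the $R$-submodule lattice coincides with the $\widehat{R}$-submodule lattice via the identification $R/\mathfrak{m}^k\cong\widehat{R}/\mathfrak{m}^k\widehat{R}$. Consequently $\mathfrak{F}^i_{\mathfrak{a},\mathfrak{m},J}(M)$ is Artinian as an $R$-module if and only if the isomorphic module $\mathfrak{F}^i_{\mathfrak{a}\widehat{R},\mathfrak{m}\widehat{R},J\widehat{R}}(\widehat{M})$ is Artinian as an $\widehat{R}$-module. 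Combining both steps, the two infima agree, which is exactly the claim.

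The main obstacle is the first step, specifically justifying the commutation of $\varprojlim_n$ with $(-)\otimes_R\widehat{R}$. The most robust way is to bypass the interchange entirely by working through the Matlis-dual formula from Proposition~\ref{inv}, because base change and Matlis duality in the local setting interact transparently, and the classical flat base change for $H^{-i}_{\mathfrak{a}}$ takes care of the remaining piece.
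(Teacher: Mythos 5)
Your proposal is correct and follows essentially the same route as the paper: the entire content of the paper's argument is the single isomorphism $\mathfrak{F}^i_{\mathfrak{a},\mathfrak{m},J}(M)\cong \mathfrak{F}^i_{\mathfrak{a}\widehat{R},\mathfrak{m}\widehat{R},J\widehat{R}}(\widehat{M})$, which the authors simply cite as Theorem 2.3 of their earlier paper and then take the conclusion as immediate. Your version re-derives that isomorphism (via flat base change together with the Matlis-dual description) and makes explicit the last transfer step -- that for a module carrying an $\widehat{R}$-structure, $R$-Artinianness and $\widehat{R}$-Artinianness agree because an Artinian module is $\mathfrak{m}$-torsion and hence has the same $R$- and $\widehat{R}$-submodule lattices -- which the paper leaves tacit; both of those elaborations are sound, so the proposal is just a more detailed rendering of the same proof.
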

\begin{proof}The statement follows because $\mathfrak{F}^i_{\mathfrak{a},\mathfrak{m},J}(M)\cong \mathfrak{F}^i_{\mathfrak{a}\widehat{R},\mathfrak{m}\widehat{R},J\widehat{R}}(\widehat{M}) $ by \cite[Theorem 2.3]{paper}.
\end{proof}
The next result extends \cite[Proposition 2.4]{yan}

\begin{prop} Let $\mathfrak{a}\subseteq \mathfrak{b}$ be two ideals of $(R,\mathfrak{m})$. We have the following inequality
$${\rm ff\!-\!depth}(\mathfrak{a},\mathfrak{m},J,M)\leq {\rm ff\!-\!depth}(\mathfrak{b},\mathfrak{m},J,M)+ {\rm ara}(\mathfrak{b}/\mathfrak{a}).$$
\end{prop}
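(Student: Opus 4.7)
The plan is to induct on $r := {\rm ara}(\mathfrak{b}/\mathfrak{a})$, in parallel with the proof of \cite[Proposition 2.4]{yan}. The base case $r=0$ means $\mathfrak{b}/\mathfrak{a}$ is nilpotent in $R/\mathfrak{a}$; together with $\mathfrak{a}\subseteq \mathfrak{b}$ this forces ${\rm Rad}(\mathfrak{a})={\rm Rad}(\mathfrak{b})$, and Proposition \ref{inv} immediately yields equality of the two formal filter depths.

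For the inductive step, I would choose $x_1,\dots,x_r\in \mathfrak{b}$ with $\sqrt{\mathfrak{a}+(x_1,\dots,x_r)R}=\sqrt{\mathfrak{b}}$ and set $\mathfrak{c}:=\mathfrak{a}+x_1 R$. Then $\mathfrak{a}\subseteq \mathfrak{c}\subseteq \mathfrak{b}$ with ${\rm ara}(\mathfrak{b}/\mathfrak{c})\leq r-1$, so the induction hypothesis (applied to the pair $\mathfrak{c}\subseteq \mathfrak{b}$) gives
$${\rm ff}\!-\!{\rm depth}(\mathfrak{c},\mathfrak{m},J,M) \leq {\rm ff}\!-\!{\rm depth}(\mathfrak{b},\mathfrak{m},J,M)+(r-1).$$
Hence it is enough to handle the single-element case $r=1$, namely to prove
$${\rm ff}\!-\!{\rm depth}(\mathfrak{a},\mathfrak{m},J,M) \leq {\rm ff}\!-\!{\rm depth}(\mathfrak{a}+xR,\mathfrak{m},J,M)+1$$
for an arbitrary $x\in R$, and then chain the two inequalities.

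For the critical case $r=1$ I would build a Mayer--Vietoris style long exact sequence linking $\mathfrak{F}^i_{\mathfrak{a},\mathfrak{m},J}(M)$ and $\mathfrak{F}^i_{\mathfrak{a}+xR,\mathfrak{m},J}(M)$. Concretely, start from the short exact sequence of \v{C}ech complexes $0 \to R_x[-1] \to \check{C}_{x} \to R \to 0$, tensor with the complex computing $H^i_{\mathfrak{m},J}(M/\mathfrak{a}^n M)$, and pass to the inverse limit over $n$; after identifying the limits with the relevant $\mathfrak{F}^i$, one reads off that if Artinianness of $\mathfrak{F}^i_{\mathfrak{a}+xR,\mathfrak{m},J}(M)$ holds in the range $i<s$, then it propagates to $\mathfrak{F}^i_{\mathfrak{a},\mathfrak{m},J}(M)$ in the range $i<s+1$. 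An equivalent route is to invoke the Matlis duality isomorphism used in the proof of Proposition \ref{inv}, reducing the statement to the classical change-of-ideal inequality for the filter-regular depth of the complex ${\rm Hom}_R(M,D_{\underline{x},J})$ under the inclusion $\mathfrak{a}\subseteq \mathfrak{a}+xR$.

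The main obstacle I anticipate is the inverse-limit step in the Mayer--Vietoris approach: one needs a Mittag--Leffler condition (or a comparable exactness input) so that the long exact sequence at the level of $H^i_{\mathfrak{m},J}(M/\mathfrak{a}^n M)$ survives passage to the limit, and one must verify that the connecting terms arising from localization at $x$ behave well enough to transfer Artinianness with the promised shift by one. The Matlis-duality alternative side-steps this, but in exchange requires checking that the standard f-depth/ara inequality applies to the hyper-local cohomology of the complex ${\rm Hom}_R(M,D_{\underline{x},J})$ rather than to a single finitely generated module.
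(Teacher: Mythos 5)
Your high-level plan matches the paper's: by Proposition \ref{inv} pass to $\mathfrak{b}=\mathfrak{a}+(x_1,\dots,x_r)$, induct on $r$, and reduce to the principal case $r=1$ via a Mayer--Vietoris--type long exact sequence for formal local cohomology. Two remarks, one of them substantive.

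First, the direction of your propagation claim is reversed, and with it the inequality you would obtain. You assert that if $\mathfrak{F}^i_{\mathfrak{a}+xR,\mathfrak{m},J}(M)$ is Artinian for $i<s$ then $\mathfrak{F}^i_{\mathfrak{a},\mathfrak{m},J}(M)$ is Artinian for $i<s+1$; taking $s={\rm ff}$-${\rm depth}(\mathfrak{b},\mathfrak{m},J,M)$ this would yield ${\rm ff}$-${\rm depth}(\mathfrak{a},\mathfrak{m},J,M)\geq {\rm ff}$-${\rm depth}(\mathfrak{b},\mathfrak{m},J,M)+1$, which is the opposite of what is wanted. What must be shown, and what the paper actually extracts from its long exact sequence
$$\cdots\rightarrow \mathfrak{F}^i_{\mathfrak{a},\mathfrak{m},J}(M)\rightarrow \mathfrak{F}^i_{\mathfrak{b},\mathfrak{m},J}(M)\rightarrow {\rm Hom}_R\bigl(R_{x,J},\mathfrak{F}^{i+1}_{\mathfrak{a},\mathfrak{m},J}(M)\bigr)\rightarrow\cdots,$$
is that Artinianness of $\mathfrak{F}^i_{\mathfrak{a},\mathfrak{m},J}(M)$ and of $\mathfrak{F}^{i+1}_{\mathfrak{a},\mathfrak{m},J}(M)$ forces Artinianness of $\mathfrak{F}^i_{\mathfrak{b},\mathfrak{m},J}(M)$: the Artinianness propagates from the smaller ideal $\mathfrak{a}$ to the larger $\mathfrak{b}$ at the cost of one degree, giving ${\rm ff}$-${\rm depth}(\mathfrak{b},\mathfrak{m},J,M)\geq {\rm ff}$-${\rm depth}(\mathfrak{a},\mathfrak{m},J,M)-1$.

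Second, you do not need to manufacture that long exact sequence by hand; it is exactly \cite[Theorem 6.11]{paper}, and the paper simply cites it. Your proposed route of tensoring $0\to R_x[-1]\to \check{C}_x\to R\to 0$ against the complex computing $H^i_{\mathfrak{m},J}(M/\mathfrak{a}^nM)$ and passing to $\varprojlim_n$ acts on the wrong parameter: tensoring with $\check{C}_x$ alters the support pair $(\mathfrak{m},J)$, not the completion ideal $\mathfrak{a}$, so it cannot compare $\mathfrak{F}^i_{\mathfrak{a},\mathfrak{m},J}(M)$ with $\mathfrak{F}^i_{\mathfrak{a}+xR,\mathfrak{m},J}(M)$. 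The Mittag--Leffler worry you flag is therefore a symptom of aiming the construction at the wrong knob. Your alternative route via the Matlis-duality isomorphism from the proof of Proposition \ref{inv} is the one that actually works: the ordinary Mayer--Vietoris sequence for $H^{-i}_{\mathfrak{a}}$ versus $H^{-i}_{\mathfrak{a}+xR}$ applied to ${\rm Hom}_R(M,D_{\underline{x},J})$ dualizes to precisely the sequence above, and this is what underlies \cite[Theorem 6.11]{paper}.
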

\begin{proof} By Proposition \ref{inv}, we may assume that exists $x_1,\ldots,x_n \in R$ such that $\mathfrak{b}=\mathfrak{a}+(x_1,\ldots,x_n)$. We proceed by induction on $n$. However, it is sufficient to show only the case $n=1$. By \cite[Theorem 6.11]{paper}, there is the long exact sequence
\begin{equation}\label{equation 1}
\cdots\rightarrow \mathfrak{F}^i_{\mathfrak{a},\mathfrak{m},J}(M)\rightarrow \mathfrak{F}^i_{\mathfrak{b},\mathfrak{m},J}(M)\rightarrow {\rm Hom}_R(R_{x,J},\mathfrak{F}^{i+1}_{\mathfrak{a},\mathfrak{m},J}(M))\rightarrow\cdots.
\end{equation}

\noindent For all $i< {\rm ff\!-\!depth}(\mathfrak{a},\mathfrak{m},J,M)-1$,  $ \mathfrak{F}^i_{\mathfrak{a},\mathfrak{m},J}(M)$ and $\mathfrak{F}^{i+1}_{\mathfrak{a},\mathfrak{m},J}(M)$ are artinian by Definition \ref{definition}. Then,  ${\rm Hom}_R(R_{x,J},\mathfrak{F}^{i+1}_{\mathfrak{a},\mathfrak{m},J}(M))$ is artinian  by the exact sequence \ref{equation 1}, and therefore ${\rm ff\!-\!depth}(\mathfrak{a},\mathfrak{m},J,M)\leq {\rm ff\!-\!depth}(\mathfrak{b},\mathfrak{m},J,M)+1.$
\end{proof}

Our next result shows the relation between ${\rm f\!-\!depth}M$  and the greatest integer such that the formal local cohomology defined by a pair of ideals is non-zero. This result improves \cite[Proposition 2.7 (1)]{yan}.

\begin{prop}Let $\mathfrak{a}, J$ be two ideals of $(R,\mathfrak{m})$, and let  ${\rm f\!-\!depth}M <\infty$. Then $${\rm ff\!-\!depth}(\mathfrak{a},\mathfrak{m},J,M)\leq {\rm min}\{ {\rm f\!-\!depth}(I,M), {\rm dim} M/(\mathfrak{a}+J)M\}.$$
\end{prop}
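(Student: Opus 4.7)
The plan is to prove the asserted minimum bound by establishing each of the two inequalities separately. The bound ${\rm ff}\!-\!{\rm depth}(\mathfrak{a},\mathfrak{m},J,M) \le f\!-\!{\rm depth}(\mathfrak{a},M)$ is already contained in the ``in particular'' conclusion of Proposition \ref{infimo} (reading the $I$ there as $\mathfrak{a}$), so I would simply quote it. Combined with the hypothesis ${\rm f}\!-\!{\rm depth}\,M < \infty$, this already guarantees ${\rm ff}\!-\!{\rm depth}(\mathfrak{a},\mathfrak{m},J,M) < \infty$; in other words, at least one index $i_0$ exists at which $\mathfrak{F}^{i_0}_{\mathfrak{a},\mathfrak{m},J}(M)$ fails to be Artinian.

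For the remaining inequality, I would set $d := \dim M/(\mathfrak{a}+J)M$ and aim for the stronger vanishing $\mathfrak{F}^i_{\mathfrak{a},\mathfrak{m},J}(M) = 0$ for every $i > d$. Since $\dim (M/\mathfrak{a}^n M)/J(M/\mathfrak{a}^n M) = d$ for every $n$, the Grothendieck-type vanishing theorem for local cohomology with respect to a pair of ideals (see \cite{art1,art3}) yields $H^i_{\mathfrak{m},J}(M/\mathfrak{a}^n M) = 0$ whenever $i > d$, and passing through the isomorphism $\mathfrak{F}^i_{\mathfrak{a},\mathfrak{m},J}(M) \cong \varprojlim_n H^i_{\mathfrak{m},J}(M/\mathfrak{a}^n M)$ delivers the desired vanishing. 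Consequently the non-Artinian index $i_0$ identified in the first paragraph must satisfy $i_0 \le d$, and taking the infimum gives ${\rm ff}\!-\!{\rm depth}(\mathfrak{a},\mathfrak{m},J,M) \le d$; combined with the first inequality this is the claimed bound.

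The main obstacle I anticipate is the vanishing $H^i_{\mathfrak{m},J}(N) = 0$ for $i > \dim N/JN$, since this is precisely the step where the pair-of-ideals machinery enters in an essential way. If the required bound is not directly available in the cited references with the needed sharpness, I would derive it either from the filtered-colimit description $H^i_{\mathfrak{m},J}(N) \cong \varinjlim_{\mathfrak{b} \in \tilde{W}(\mathfrak{m},J)} H^i_{\mathfrak{b}}(N)$ combined with a dimension reduction through $N/JN$, or by induction on $\dim N/JN$ using the short exact sequence $0 \to \Gamma_{\mathfrak{m},J}(N) \to N \to N/\Gamma_{\mathfrak{m},J}(N) \to 0$ together with the classical Grothendieck vanishing for ordinary local cohomology.
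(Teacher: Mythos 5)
Your overall structure matches the paper's: one inequality from Proposition~\ref{infimo}, the other from the vanishing $\mathfrak{F}^i_{\mathfrak{a},\mathfrak{m},J}(M)=0$ for $i>\dim M/(\mathfrak{a}+J)M$. You are also right that the hypothesis ${\rm f}\!-\!{\rm depth}\,M<\infty$ is what guarantees a non-Artinian index exists at all; the paper passes from $\inf\{i:\text{not Artinian}\}$ to $\sup\{i:\text{not Artinian}\}\le\sup\{i:\mathfrak{F}^i\neq 0\}$, a chain that is only valid once that set is nonempty, and you make this explicit where the paper leaves it implicit.

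The one place where your route genuinely diverges is the dimension bound. The paper simply invokes \cite[Theorem 4.2]{paper}, which asserts $\sup\{i:\mathfrak{F}^i_{\mathfrak{a},\mathfrak{m},J}(M)\neq 0\}=\dim M/(\mathfrak{a}+J)M$ outright; you instead try to rebuild this from a vanishing $H^i_{\mathfrak{m},J}(N)=0$ for $i>\dim N/JN$ applied to $N=M/\mathfrak{a}^nM$. Be careful: the Grothendieck-type vanishing available in \cite{art1} (Theorem 4.7) gives $H^i_{I,J}(N)=0$ only for $i>\dim N$, and $\dim(M/\mathfrak{a}^nM)$ can well exceed $\dim M/(\mathfrak{a}+J)M$. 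So the sharper cutoff $\dim N/JN$ is not a direct citation of the Grothendieck bound, and the induction you sketch via $\Gamma_{\mathfrak{m},J}$ or the filtered colimit would need to be carried out in full. You correctly identify this as the crux. Since the needed statement is exactly \cite[Theorem 4.2]{paper}, quoting it is the cleaner and safer move; your rederivation is a potentially more self-contained alternative, but as written it leaves the key sharpened vanishing as an unresolved step.
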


\begin{proof}
 Firstly note that
$$
\begin{array}{lll}
{\rm ff\!-\!depth}(\mathfrak{a},\mathfrak{m},J,M)&\leq& {\rm sup}\{ i \  | \  \mathfrak{F}^i_{\mathfrak{a},\mathfrak{m},J}(M) \  {\rm is \  not \  artinian} \}\\
&\leq& {\rm sup}\{ i \  | \  \mathfrak{F}^i_{\mathfrak{a},\mathfrak{m},J}(M)\neq 0 \}= {\rm dim}M/(\mathfrak{a}+J)M.  \end{array}
$$

The last equality follows by \cite[Theorem 4.2]{paper}. Now, by Proposition \ref{infimo} we have that ${\rm ff\!-\!depth}(\mathfrak{a},\mathfrak{m},J,M) \leq {\rm inf}\{ i \  | \  H^i_{\mathfrak{m}}(M) \  {\rm is \  not \  artinian} \}$.

\end{proof}

\section{Non-Artinian of $ \mathfrak{F}^i_{\mathfrak{a},\mathfrak{m},J}(M)$ }\label{section3}

\hspace{0.5cm} A natural question that arises when one is dealing on artinianness of formal local cohomology  is to know if, for an  finite $R$-module, the formal local cohomology is not Artinian for some integer $i$. The
objective of this section is give some answer about this question.

For this purpose, we give an important and essential construction for our next result. In \cite[Theorem 5.1]{art1}, Takahashi and {\it etal} showed a generalization of the local duality theorem. More explicit, let $(R,\mathfrak{m})$ be a Cohen Macaulay complete local ring of dimension $d$ and $J$ be a perfect ideal of $R$, i.e, ${\rm grade}(J,R)= {\rm pd}_R R/J=t$. Then, if $M$ is a finitely generated $R$-module, then there is the isomorphism
$$H_{\mathfrak{m},J}^i(M)\cong {\rm Ext}^{d-t-i}_R(M,S)^\vee,$$ for all integer $i$ and $S= H^{d-t}_{\mathfrak{m},J}(R)^\vee$.

With this, in the context of formal local cohomology defined by a pair of ideals we have that
$$ \mathfrak{F}^i_{\mathfrak{a},\mathfrak{m},J}(M)\cong {\rm Hom}_R(\displaystyle \varinjlim{\rm Ext}_R^{d-t-i}(M/\mathfrak{a}^nM,S),E_R(\mathbb{K})).$$
Note that, for all $i\in \mathbb{Z}$, $\displaystyle \varinjlim{\rm Ext}_R^{d-t-i}(M/\mathfrak{a}^nM,S)$ is exactly the generalized local cohomology with respect to $\mathfrak{a}$  (denoted by $H^{d-t-i}_{\mathfrak{a}}(M,S)$), introduced by Herzog  \cite{herzog}. Therefore
$$ \mathfrak{F}^i_{\mathfrak{a},\mathfrak{m},J}(M)\cong H^{d-t-i}_{\mathfrak{a}}(M,S)^\vee$$ where $(-)^\vee= {\rm Hom}_R(-,E_R(\mathbb{K}))$, $i \in \mathbb{Z}$. This shows the relation between the formal local cohomology defined by a pair of ideals and the Matlis' dual of certain generalized local cohomology with respect to $\mathfrak{a}$.
Now we are able to show the next result, that generalizes \cite[Theorem 2.16]{yan}.

\begin{thm} Let $\mathfrak{a}, \mathfrak{b}$ and $J$ be ideals of $(R,\mathfrak{m})$ such that $\mathfrak{a}\subseteq\mathfrak{b}$, and $J$ a perfect ideal of $R$ of grade $t$. If $M$ is a finitely generated $R$ module of dimension $d$, then there is a surjective homomorphism $\mathfrak{F}^{d-t}_{\mathfrak{a},\mathfrak{m},J}(M)\rightarrow \mathfrak{F}^{d-t}_{\mathfrak{b},\mathfrak{m},J}(M)$. In particular $\mathfrak{F}^{d-t}_{\mathfrak{b},\mathfrak{m},J}(M)$ is a quocient of $H^{d-t}_{\mathfrak{m},J}(M)$.

\end{thm}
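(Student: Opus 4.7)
The plan is to exploit the Matlis-dual identification
$\mathfrak{F}^{i}_{\mathfrak{a},\mathfrak{m},J}(M)\cong H^{d-t-i}_{\mathfrak{a}}(M,S)^{\vee}$
just established in the paragraph preceding the theorem (with $S=H^{d-t}_{\mathfrak{m},J}(R)^{\vee}$), specialized at $i=d-t$. This reduces the problem of producing a surjection $\mathfrak{F}^{d-t}_{\mathfrak{a},\mathfrak{m},J}(M)\rightarrow \mathfrak{F}^{d-t}_{\mathfrak{b},\mathfrak{m},J}(M)$ to producing an injection
$H^{0}_{\mathfrak{b}}(M,S)\rightarrow H^{0}_{\mathfrak{a}}(M,S)$
of the Herzog generalized local cohomology modules in degree zero, which can then be Matlis-dualized.

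To construct this injection, I would start from the inclusion $\mathfrak{a}\subseteq\mathfrak{b}$, which forces $\mathfrak{a}^{n}\subseteq\mathfrak{b}^{n}$ for every $n$, so that the canonical surjections $M/\mathfrak{a}^{n}M\rightarrow M/\mathfrak{b}^{n}M$ assemble into a map of direct systems. Applying the left-exact contravariant functor $\operatorname{Hom}_{R}(-,S)$ level by level yields a compatible family of injections $\operatorname{Hom}_{R}(M/\mathfrak{b}^{n}M,S)\rightarrow \operatorname{Hom}_{R}(M/\mathfrak{a}^{n}M,S)$, and passing to the direct limit (which preserves monomorphisms) gives the desired embedding $H^{0}_{\mathfrak{b}}(M,S)\rightarrow H^{0}_{\mathfrak{a}}(M,S)$.

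Dualizing with $(-)^{\vee}=\operatorname{Hom}_{R}(-,E_{R}(\mathbb{K}))$, which is exact because $E_{R}(\mathbb{K})$ is injective, reverses this into a surjection $H^{0}_{\mathfrak{a}}(M,S)^{\vee}\rightarrow H^{0}_{\mathfrak{b}}(M,S)^{\vee}$, and translating through the duality isomorphism recalled in the first paragraph produces the required surjection $\mathfrak{F}^{d-t}_{\mathfrak{a},\mathfrak{m},J}(M)\rightarrow \mathfrak{F}^{d-t}_{\mathfrak{b},\mathfrak{m},J}(M)$. For the ``in particular'' clause, I would specialize $\mathfrak{a}$ to the zero ideal: since $0^{n}M=0$ for every $n\geq 1$, the defining projective system of $\mathfrak{F}^{d-t}_{0,\mathfrak{m},J}(M)=\varprojlim H^{d-t}_{\mathfrak{m},J}(M/0^{n}M)$ is constant and equal to $H^{d-t}_{\mathfrak{m},J}(M)$, so its limit is $H^{d-t}_{\mathfrak{m},J}(M)$ itself, and composing with the surjection already constructed exhibits $\mathfrak{F}^{d-t}_{\mathfrak{b},\mathfrak{m},J}(M)$ as a quotient of $H^{d-t}_{\mathfrak{m},J}(M)$.

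No step presents a serious obstacle; the only item deserving care is a naturality check, namely that the level-wise dualized maps assemble into precisely the transition map hidden inside the isomorphism $\mathfrak{F}^{i}_{\bullet,\mathfrak{m},J}(M)\cong H^{d-t-i}_{\bullet}(M,S)^{\vee}$, so that the final map one writes down really is the surjection induced by the containment $\mathfrak{a}\subseteq \mathfrak{b}$. Everything else is a formal consequence of the exactness of Matlis duality and of the fact that direct limits preserve monomorphisms.
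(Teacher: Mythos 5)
Your core mechanism --- construct the natural injection $H^{0}_{\mathfrak{b}}(M,S)\hookrightarrow H^{0}_{\mathfrak{a}}(M,S)$ from $\mathfrak{a}\subseteq\mathfrak{b}$, then apply the exact functor $(-)^{\vee}$ and the generalized-local-duality identification --- is exactly what the paper does in its last step, and the treatment of the ``in particular'' clause by taking $\mathfrak{a}=0$ is also correct.

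However, there is a genuine gap. The identification
$\mathfrak{F}^{i}_{\mathfrak{a},\mathfrak{m},J}(M)\cong H^{d-t-i}_{\mathfrak{a}}(M,S)^{\vee}$
that you invoke at the outset was established in the preceding paragraph only under the hypotheses that $(R,\mathfrak{m})$ is Cohen--Macaulay and complete, and --- crucially --- with $d=\dim R$, not $d=\dim M$. The theorem you are proving assumes none of this: $R$ is an arbitrary local ring and $d$ is $\dim M$. Applying the duality directly, as you do, is therefore unjustified, and the exponent $d-t-i$ is not even the right one unless $\dim R=\dim M$. The paper spends the bulk of its proof precisely on the reductions needed to make this duality legitimate: first replace $R$ by $R/\operatorname{Ann}_R M$ so that $\dim R=\dim M=d$ (using the invariance $\mathfrak{F}^{i}_{\mathfrak{a},\mathfrak{m},J}(M)\cong\mathfrak{F}^{i}_{\mathfrak{a}\overline R,\mathfrak{m}\overline R,J\overline R}(M)$), then pass to the $\mathfrak{m}$-adic completion, and finally invoke Cohen's Structure Theorem to replace $R$ by a complete regular (hence Cohen--Macaulay) local ring $T$, adjusting the ideals $\mathfrak{a},\mathfrak{b}$ accordingly while preserving all the formal local cohomologies. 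Only after these three reductions does the Matlis-dual description in terms of generalized local cohomology in degree zero become available, and your subsequent argument goes through. Without them, your first line quotes a result under hypotheses your theorem does not grant, so the proof as written does not stand.
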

\begin{proof}Firstly, let $\overline{R}=R/{\rm Ann}_R M$. By \cite[Theorem 2.3]{paper}, we have that $ \mathfrak{F}^i_{\mathfrak{a},\mathfrak{m},J}(M)\cong \mathfrak{F}^i_{\mathfrak{a}\overline{R},\mathfrak{m}\overline{R},J\overline{R}}(M)$ and $ \mathfrak{F}^i_{\mathfrak{b},\mathfrak{m},J}(M)\cong \mathfrak{F}^i_{\mathfrak{b}\overline{R},\mathfrak{m}\overline{R},J\overline{R}}(M)$. Thus, we can assume that ${\rm Ann}_R M=0$ and then ${\rm dim}R=d$. Now, since $\mathfrak{F}^i_{\mathfrak{a},\mathfrak{m},J}(M)\cong \mathfrak{F}^i_{\mathfrak{a}\widehat{R},\mathfrak{m}\widehat{R},J\widehat{R}}(\widehat{M})$ \cite[Theorem 2.3]{paper} we can consider that $R$ is complete.

By Cohen's Structure Theorem, there exists a complete regular local ring $(T,\mathfrak{n})$ such that $R\cong T/I$ for some ideal $I$ of $T$. Set $\mathfrak{a}_1=\mathfrak{a}\cap I$ and $\mathfrak{b}_1=\mathfrak{b}\cap I$. By \cite[Theorem 2.3]{paper} we have that $\mathfrak{F}^i_{\mathfrak{a},\mathfrak{m},J}(M)\cong \mathfrak{F}^i_{\mathfrak{a}_1,\mathfrak{m},J}(M)$ and $\mathfrak{F}^i_{\mathfrak{b},\mathfrak{m},J}(M)\cong \mathfrak{F}^i_{\mathfrak{b}_1,\mathfrak{m},J}(M)$ for all $i\geq 0$. Since ${\rm dim}_R M={\rm dim}_T M$, we may assume that $R=T$.
Using the previous comment we have that
$$ \mathfrak{F}^{d-t}_{\mathfrak{a},\mathfrak{m},J}(M)\cong  \mathfrak{F}^{d-t}_{\mathfrak{a}_1,\mathfrak{m},J}(M)\cong {\rm Hom}_T(H^{0}_{\mathfrak{a}_1}(M,S),E_T(T/ \mathfrak{n}))  \ \ {\rm and} $$
$$ \mathfrak{F}^{d-t}_{\mathfrak{b},\mathfrak{m},J}(M)\cong  \mathfrak{F}^{d-t}_{\mathfrak{b}_1,\mathfrak{m},J}(M)\cong {\rm Hom}_T(H^{0}_{\mathfrak{b}_1}(M,S),E_T(T/ \mathfrak{n})). $$

This finishes the proof because $H^{0}_{\mathfrak{b}_1}(M,S)$ is a submodule of $H^{0}_{\mathfrak{a}_1}(M,S)$.
\end{proof}

\begin{rem}By previous result follows that $\mathfrak{F}^{d-t}_{\mathfrak{b},\mathfrak{m},J}(M)$ is Artinian if $H^{d-t}_{\mathfrak {m},J}(M)$ is Artinian. The proof of the \cite[Theorem 5.1]{art1} showed that  $H^{i}_{\mathfrak {m},J}(M)=0$ for all $i\neq d-t$. Moreover, if $J$ is non-nilpotent ideal of $R$,  $H^{d-t}_{\mathfrak {m},J}(M)$ is not Artinian \cite[Theorem 2.10]{nonart}.
Therefore, if $J$ is non-nilpotent ideal and with the same hypotheses of  Theorem 2.6, $\mathfrak{F}^{d-t}_{\mathfrak{b},\mathfrak{m},J}(M)$ is not Artinian if the kernel of the surjective map above is artinian.


\end{rem}

\begin{rem}\label{rem3}  With the same hypotheses of \cite[Theorem 2.9, (a)]{nonart}, is possible to show other version of Local Duality theorem generalized.  By previous comment the Theorem 2.6, more explicit, if $(R,\mathfrak{m})$ be a local ring, $M$ be a finite Cohen-Macaulay $R$-module such that ${\rm dim}M={\rm dim}R=d$ and $J$ be an ideal generated by a regular sequence $x_1,\ldots,x_t$ in $R$ we have that
$$ \mathfrak{F}^i_{\mathfrak{a},\mathfrak{m},J}(M)\cong H^{d-t-i}_{\mathfrak{a}}(M,S)^\vee$$ where $(-)^\vee= {\rm Hom}_R(-,E_R(\mathbb{K}))$, $i \in \mathbb{Z}$.
Therefore we have other version of the Theorem 2.6 for $R$, i.e, there is a surjective homomorphism $\mathfrak{F}^{d-t}_{\mathfrak{a},\mathfrak{m},J}(R)\rightarrow \mathfrak{F}^{d-t}_{\mathfrak{b},\mathfrak{m},J}(R)$.
With this, is possible to obtain the same of previous remark.
\end{rem}

\section{Artinianness of Formal Local Cohomology Modules}

\hspace{0.5cm} One of the important problems in local cohomology is to investigate artinianness properties. The main purpose of this section is give some results about the artinianness of formal local cohomology and \v{C}ech-formal local cohomology modules with respect to a pair of ideals.

Let $t$ be an integer. It is well known that the local cohomology module $H_I^i(M)$ is finitely generated for all $i<t$ if, and only if, there is some integer $r>0$ such that  $I^rH_I^i(M)=0$ for all $i<t$. Similar results were obtained by Yan Gu \cite{yan}, Bijan-Zadeh and Rezaei \cite{ffdepth1}, and  Mafi \cite{13} in the context of ordinary formal local cohomology. More specifically, $\mathfrak{\check{F}}^i_{I}(M)$ is Artinian for all $i<t$ (respectively for $i>t$) if and only if there is some integer $r>0$ such that  $I^r\mathfrak{\check{F}}^i_{I}(M)=0$ for all $i<t$ (respectively for $i>t$). The following theorem extends this previous result commented.

\begin{thm}  Let $\mathfrak{a},J$ ideals of  $(R, \mathfrak{m})$ and $M$ a finitely generated $R$-module. Let $t$ be a non-negative integer. If $\Gamma_{\mathfrak{a}}(M)$ is an $J$-torsion $R$-module the following statements are equivalent:
\begin{itemize}
\item[(a)] $\mathfrak{\check{F}}^i_{\mathfrak{a},\mathfrak{m},J}(M)$ is Artinian for all $i>t$;
\item[(b)] $\mathfrak{a}\subseteq {\rm Rad}(0: \mathfrak{\check{F}}^i_{\mathfrak{a},\mathfrak{m},J}(M))$ for all $i>t$.
\end{itemize}
\end{thm}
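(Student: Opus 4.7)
The theorem establishes an equivalence between Artinianness of $\mathfrak{\check{F}}^i_{\mathfrak{a},\mathfrak{m},J}(M)$ in a high-degree range and an annihilator condition, generalizing results of Mafi \cite{131}, Yan Gu \cite{yan} and Bijan-Zadeh--Rezaei \cite{ffdepth1} for the case $J=0$. I plan to prove the two implications separately, in each case exploiting the hypothesis that $\Gamma_\mathfrak{a}(M)$ is $J$-torsion to reduce to the classical formal local cohomology situation.

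For $(b) \Rightarrow (a)$, my plan is descending induction on $i$, anchored at $i > {\rm dim}\, M/(\mathfrak{a}+J)M$, where $\mathfrak{\check{F}}^i$ vanishes by \cite[Theorem 4.2]{paper} and is trivially Artinian. For the inductive step I would first reduce to the case $\mathfrak{a}=(x)$ principal, by iterating on a system of generators and invoking the long exact sequence relating $\mathfrak{\check{F}}^i$ for $\mathfrak{a} \subseteq \mathfrak{a}+(x)$ (analogous to the one used in Section 2 for $\mathfrak{F}^i$). Once $\mathfrak{a}=(x)$, the hypothesis $x^r \mathfrak{\check{F}}^i_{\mathfrak{a},\mathfrak{m},J}(M)=0$ for some $r$, combined with the long exact sequence obtained from $0 \to M \xrightarrow{x^r} M \to M/x^r M \to 0$ (adjusted for the $x$-torsion submodule, which sits inside $\Gamma_\mathfrak{a}(M)$ and is $J$-torsion by assumption), would display $\mathfrak{\check{F}}^i_{\mathfrak{a},\mathfrak{m},J}(M)$ as a subquotient of terms $\mathfrak{\check{F}}^\bullet_{\mathfrak{a},\mathfrak{m},J}(M/x^r M)$ that are Artinian by descending induction together with an auxiliary induction on ${\rm dim}\, M$.

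For $(a) \Rightarrow (b)$, the $J$-torsion hypothesis on $\Gamma_\mathfrak{a}(M)$ is essential: it ensures that, when the short exact sequence $0 \to \Gamma_\mathfrak{a}(M) \to M \to M/\Gamma_\mathfrak{a}(M) \to 0$ is fed through $\varprojlim H^\bullet_{\mathfrak{m},J}(-/\mathfrak{a}^n-)$, the contribution of $\Gamma_\mathfrak{a}(M)$ reduces to ordinary formal local cohomology, so that Artinianness of $\mathfrak{\check{F}}^i_{\mathfrak{a},\mathfrak{m},J}(M)$ transfers to the $\mathfrak{a}$-torsion-free quotient $M/\Gamma_\mathfrak{a}(M)$. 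On this quotient, one chooses an $\mathfrak{a}$-filter regular element $x \in \mathfrak{a}$. Since each $H^i_{\mathfrak{m},J}(M/\mathfrak{a}^n M)$ is annihilated by $\mathfrak{a}^n$, multiplication by any $a \in \mathfrak{a}$ acts ``formally nilpotently'' on the inverse limit; when that inverse limit is Artinian, the descending chain $a^n\mathfrak{\check{F}}^i \supseteq a^{n+1}\mathfrak{\check{F}}^i \supseteq \cdots$ stabilizes, and a Nakayama-type argument forces $a^N \mathfrak{\check{F}}^i = 0$ for some $N$, giving $\mathfrak{a} \subseteq {\rm Rad}(0:\mathfrak{\check{F}}^i)$.

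The main obstacle I anticipate is the passage to inverse limits: the long exact sequences present at the level of each $M/\mathfrak{a}^n M$ do not automatically remain exact after applying $\varprojlim$, and the equivalence between Artinianness of $\mathfrak{\check{F}}^i$ and of $\varprojlim H^i_{\mathfrak{m},J}(M/\mathfrak{a}^n M)$ requires $\varprojlim^1$-vanishing. I would control these issues by verifying the Mittag-Leffler condition on the relevant projective systems, using the fact that in the range under consideration each $H^i_{\mathfrak{m},J}(M/\mathfrak{a}^n M)$ is an Artinian module killed by $\mathfrak{a}^n$, so that inverse limits of short exact sequences of such systems preserve both exactness and Artinianness.
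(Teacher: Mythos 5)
Your plan for $(b)\Rightarrow(a)$ contains a genuine gap. The intended reduction to $\mathfrak{a}=(x)$ principal does not go through: the ideal $\mathfrak{a}$ appears in the definition of $\check{\mathfrak{F}}^i_{\mathfrak{a},\mathfrak{m},J}(M)$ itself, so replacing $\mathfrak{a}$ by a smaller or larger ideal changes the modules under discussion. The long exact sequence relating $\mathfrak{F}^\bullet_{\mathfrak{a}}$ and $\mathfrak{F}^\bullet_{\mathfrak{a}+(x)}$ (the analogue of equation (2.1)) compares these two \emph{different} formal cohomologies, and the annihilator hypothesis (b) for one does not transfer to the other, nor does Artinianness pass cleanly through the $\operatorname{Hom}_R(R_{x,J},-)$ term. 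So an induction ``on the number of generators of $\mathfrak{a}$'' has no viable base or step here. The paper avoids this entirely: after reducing to $\Gamma_{\mathfrak{a}}(M)=0$, it simply picks \emph{one} $M$-regular element $x\in\mathfrak{a}$ (no assumption that $\mathfrak{a}$ is principal), notes that hypothesis (b) directly yields $x^{j_i}\check{\mathfrak{F}}^i_{\mathfrak{a},\mathfrak{m},J}(M)=0$, and deduces from $0\to M\xrightarrow{x^{j_i}}M\to M/x^{j_i}M\to0$ an injection $\check{\mathfrak{F}}^i_{\mathfrak{a},\mathfrak{m},J}(M)\hookrightarrow\check{\mathfrak{F}}^i_{\mathfrak{a},\mathfrak{m},J}(M/x^{j_i}M)$. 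The outer induction is on $\dim M$ (anchored at $\dim M=0$), not a descending induction on $i$; since $\dim M/x^{j_i}M<\dim M$ and the same exact sequence shows the annihilator condition passes to $M/x^{j_i}M$, the inductive hypothesis finishes. You should replace the ``reduce to principal'' step by this single-element argument, and re-anchor the induction on $\dim M$.

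A smaller mismatch: you describe the $J$-torsion hypothesis on $\Gamma_{\mathfrak{a}}(M)$ as ``essential'' for $(a)\Rightarrow(b)$, but it is in fact used only in $(b)\Rightarrow(a)$, precisely to identify $\check{\mathfrak{F}}^i_{\mathfrak{a},\mathfrak{m},J}(\Gamma_{\mathfrak{a}}(M))\cong H^i_{\mathfrak{m}}(\Gamma_{\mathfrak{a}}(M))$ and thereby pass to the $\mathfrak{a}$-torsion-free quotient. The direction $(a)\Rightarrow(b)$ in the paper is the short one: Artinianness forces the descending chain $\{\mathfrak{a}^j\check{\mathfrak{F}}^i\}_j$ to stabilize, and since the stable value is contained in the intersection $\bigcap_j\mathfrak{a}^j\check{\mathfrak{F}}^i$, which vanishes (cf.\ Proposition 5.1 and its proof, using that $\mathfrak{a}^n$ kills $H^i_{\mathfrak m,J}(M/\mathfrak{a}^n M)$), one gets $\mathfrak{a}^j\check{\mathfrak{F}}^i=0$. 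Your choice of a filter regular element there is unnecessary. The Mittag--Leffler concerns you raise are legitimate in principle, but the paper sidesteps them by always working with the long exact sequences of $\check{\mathfrak{F}}^\bullet$ furnished by \cite[Theorem 3.4]{paper} for short exact sequences of modules, rather than trying to pass long exact sequences of ordinary local cohomology through $\varprojlim$.
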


\begin{proof} $(a)\Rightarrow (b)$ Let $i>t$. Since $\mathfrak{\check{F}}^i_{\mathfrak{a},\mathfrak{m},J}(M)$ is Artinian and $R$ local ring, we have that $\mathfrak{a}^j\mathfrak{F}^i_{\mathfrak{a},\mathfrak{m},J}(M)=0$  for some positive integer $j$.  Then $\mathfrak{a}\subseteq {\rm Rad}(0: \mathfrak{\check{F}}^i_{\mathfrak{a},\mathfrak{m},J}(M))$ for all $i>t$.

$(b)\Rightarrow (a)$ We prove it by induction on ${\rm dim}M= d$. If $d=0$, we have  $\mathfrak{\check{F}}^i_{\mathfrak{a},\mathfrak{m},J}(M))=\mathfrak{F}^i_{\mathfrak{a},\mathfrak{m},J}(M))=0$ for all $i>0$ \cite[Proposition 4.1]{paper}. Therefore the result is clearly true.
Next, we assume that $d>0$ and that the claim is true for all values less than $d$. Firstly, since $\Gamma_{\mathfrak{a}}(M)$ is ${\mathfrak a}$-torsion and  $J$-torsion we have that $\mathfrak{\check{F}}^i_{\mathfrak{a},\mathfrak{m},J}(\Gamma_{\mathfrak{a}}(M))\cong H^i_{\mathfrak{m}}(\Gamma_{\mathfrak{a}}(M))$. Then from the exact sequence
$$0\rightarrow  \Gamma_{\mathfrak{a}}(M)\rightarrow M \rightarrow M/\Gamma_{\mathfrak{a}}(M)\rightarrow 0,$$  we have by \cite[Theorem 3.4]{paper} the long exact sequence
$$\ldots \rightarrow H^i_{\mathfrak{m}}(\Gamma_{\mathfrak{a}}(M))\rightarrow \mathfrak{\check{F}}^i_{\mathfrak{a},\mathfrak{m},J}(M)\rightarrow \mathfrak{\check{F}}^i_{\mathfrak{a},\mathfrak{m},J}(M/\Gamma_{\mathfrak{a}}(M))\rightarrow H^{i+1}_{\mathfrak{m}}(\Gamma_{\mathfrak{a}}(M))\rightarrow \ldots   (\sharp).$$
So, if $\mathfrak{\check{F}}^i_{\mathfrak{a},\mathfrak{m},J}(M/\Gamma_{\mathfrak{a}}(M))$ is artinian for all $i>t$, then we have the statement. From $(\sharp)$, we can see that $\mathfrak{a}\subseteq {\rm Rad}(0: \mathfrak{\check{F}}^i_{\mathfrak{a},\mathfrak{m},J}(M/\Gamma_{\mathfrak{a}}(M)))$ for all $i>t$. Then, we can assume that $\Gamma_{\mathfrak{a}}(M)=0$. Let $x\in \mathfrak{a}$ an $M$-regular element. By hypothesis, for all $i>t$, there is $j_i$ a positive integer  such that $x^{j_i}\mathfrak{\check{F}}^i_{\mathfrak{a},\mathfrak{m},J}(M)=0$. The exact sequence
$$0\rightarrow M\stackrel{x^{j_i}}{\rightarrow} M\rightarrow M/x^{j_i}M\rightarrow 0$$  induces the exact sequence
$$0\rightarrow \mathfrak{\check{F}}^i_{\mathfrak{a},\mathfrak{m},J}(M) \rightarrow \mathfrak{\check{F}}^i_{\mathfrak{a},\mathfrak{m},J}(M/x^{j_i}M)\rightarrow \mathfrak{\check{F}}^{i+1}_{\mathfrak{a},\mathfrak{m},J}(M)$$
for all $i>t$. This sequence provides  that  $\mathfrak{a}\subseteq {\rm Rad}(0: \mathfrak{\check{F}}^i_{\mathfrak{a},\mathfrak{m},J}(M/x^{j_i}M))$, and   $\mathfrak{\check{F}}^i_{\mathfrak{a},\mathfrak{m},J}(M/x^{j_i}M))$ is artinian for all $i>t$ by inductive hypothesis. Therefore $\mathfrak{\check{F}}^i_{\mathfrak{a},\mathfrak{m},J}(M)$ is artinian for all $i>t$.
\end{proof}


\begin{cor}  Let $\mathfrak{a},J$ be two ideals of $(R, \mathfrak{m})$. Let $M$ be a finitely generated $R$-module and $t$ be a non-negative integer. If $\Gamma_{\mathfrak{a}}(M)$ is an $J$-torsion $R$-module, then
$$ g{\check g}\!-\!{\rm depth}(\mathfrak{a},\mathfrak{m},J,M)={\rm sup}\{i:\mathfrak{a} \not\subseteq {\rm Rad}(0: \mathfrak{\check{F}}^i_{\mathfrak{a},\mathfrak{m},J}(M))\}.$$

\end{cor}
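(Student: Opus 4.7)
The plan is to derive the corollary as an immediate consequence of the preceding theorem, without re-running its inductive argument. Both quantities to be compared are suprema of sets of integers, and the theorem already provides, for every non-negative integer $t$, the equivalence
$$\mathfrak{\check{F}}^i_{\mathfrak{a},\mathfrak{m},J}(M) \text{ Artinian for all } i>t \iff \mathfrak{a}\subseteq {\rm Rad}(0:\mathfrak{\check{F}}^i_{\mathfrak{a},\mathfrak{m},J}(M)) \text{ for all } i>t,$$
under the hypothesis that $\Gamma_{\mathfrak{a}}(M)$ is $J$-torsion, which is exactly the hypothesis of the corollary.

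The key step is to translate the supremum statements into ``for all $i>t$'' statements. I would denote $A := g{\check g}\!-\!{\rm depth}(\mathfrak{a},\mathfrak{m},J,M)$ and $B := \sup\{i : \mathfrak{a} \not\subseteq {\rm Rad}(0:\mathfrak{\check{F}}^i_{\mathfrak{a},\mathfrak{m},J}(M))\}$. For any integer $t$, directly from the definition of supremum, $A\leq t$ holds precisely when condition (a) of the preceding theorem holds at $t$, and $B\leq t$ holds precisely when condition (b) holds at $t$: indeed, $A\leq t$ means that no $i>t$ belongs to $\{i : \mathfrak{\check{F}}^i_{\mathfrak{a},\mathfrak{m},J}(M)\text{ not Artinian}\}$, i.e.\ that every such $i$ gives an Artinian module, and analogously for $B$. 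Applying the equivalence (a)$\iff$(b) of the theorem for every $t$, I obtain $A\leq t \iff B\leq t$ for all $t\in\N$, whence $A=B$.

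Because the corollary is essentially just a restatement of the preceding theorem in terms of suprema, I do not anticipate any substantive obstacle; the only minor point to handle is the degenerate situation in which one of the two sets is empty (both suprema then collapse to the same conventional value) or unbounded above (both must then be unbounded, since the equivalence runs uniformly over all $t$). Either way, the same equivalence closes the argument.
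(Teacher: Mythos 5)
Your proposal is correct and coincides with what the paper intends: the corollary is stated without a separate proof precisely because it is the direct ``supremum-form'' translation of the equivalence (a) $\Leftrightarrow$ (b) in the preceding theorem, which is exactly what you carry out. One small point worth flagging: the theorem's equivalence is stated only for non-negative $t$, so the implication $A\leq t \iff B\leq t$ is literally available only for $t\geq 0$; this pins down $A=B$ whenever one of them is $\geq 0$ or $=+\infty$, but in the degenerate case where one set is empty you would additionally want the pointwise inclusion $\{i:\mathfrak{a}\not\subseteq {\rm Rad}(0:\check{\mathfrak{F}}^i_{\mathfrak{a},\mathfrak{m},J}(M))\}\subseteq\{i:\check{\mathfrak{F}}^i_{\mathfrak{a},\mathfrak{m},J}(M)\ \text{not Artinian}\}$, which follows from the paper's later observation that Artinian (hence representable) formal local cohomology modules satisfy $\mathfrak{a}\subseteq{\rm Rad}(0:\check{\mathfrak{F}}^i)$; you already signal awareness of this boundary case, so the argument is sound.
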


\begin{rem}\label{remark5.}

\begin{itemize}{\rm
\item[1)] With previous result and \cite[Proposition 2.4]{paper} is possible to show that: If  $\mathfrak{a}\subseteq {\rm Rad}(0: \mathfrak{\check{F}}^i_{\mathfrak{a},\mathfrak{m},J}(M))$ for all $i>t$, then $\mathfrak{F}^i_{\mathfrak{a},\mathfrak{m},J}(M)$ is artinian for all $i>t$.

On the other hand, for formal local cohomology $\mathfrak{F}^i_{\mathfrak{a},\mathfrak{m},J}(M)$ the statement $(a)\Rightarrow (b)$  is true too.

Note that, for all the cases in \cite[Corollary 3.5]{paper} the equivalence between (a) and (b) happen for $\mathfrak{F}^i_{\mathfrak{a},\mathfrak{m},J}(M)$.

\item[2)]  Using the same idea of the previous Theorem, is possible to show  that $\mathfrak{\check{F}}^i_{\mathfrak{a},\mathfrak{m},J}(M)$ is Artinian for all $i<t$ if, and only if, $\mathfrak{a}\subseteq {\rm Rad}(0: \mathfrak{\check{F}}^i_{\mathfrak{a},\mathfrak{m},J}(M))$ for all $i<t$.}
\end{itemize}
\end{rem}

\begin{cor}  Let $\mathfrak{a},J$ be two ideals of  $(R, \mathfrak{m})$. Let $M$ be a finitely generated $R$-module and $t$ be a non-negative integer. If $\Gamma_{\mathfrak{a}}(M)$ is an $J$-torsion $R$-module, then
$$ {\rm f{\check f}\!-\!depth}(\mathfrak{a},\mathfrak{m},J,M)={\rm inf}\{i:\mathfrak{a} \not\subseteq {\rm Rad}(0: \mathfrak{\check{F}}^i_{\mathfrak{a},\mathfrak{m},J}(M))\}.$$

\end{cor}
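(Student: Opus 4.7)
The plan is to derive this Corollary as a straightforward consequence of the ``$i<t$'' version of Theorem~4.1 stated in Remark~\ref{remark5.}(2). Concretely, denote
\[
s := \inf\{\,i \mid \mathfrak{a}\not\subseteq \operatorname{Rad}(0:\check{\mathfrak{F}}^i_{\mathfrak{a},\mathfrak{m},J}(M))\,\},\qquad t_0 := {\rm f\check{f}}\!-\!{\rm depth}(\mathfrak{a},\mathfrak{m},J,M),
\]
and show the two inequalities $s\le t_0$ and $t_0\le s$ separately using the equivalence
\[
\check{\mathfrak{F}}^i_{\mathfrak{a},\mathfrak{m},J}(M) \text{ is Artinian for all } i<t \iff \mathfrak{a}\subseteq \operatorname{Rad}(0:\check{\mathfrak{F}}^i_{\mathfrak{a},\mathfrak{m},J}(M)) \text{ for all } i<t,
\]
which is the content of Remark~\ref{remark5.}(2) (valid under the hypothesis that $\Gamma_{\mathfrak{a}}(M)$ is $J$-torsion).

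For $s\le t_0$: if $t_0=0$ the inequality is automatic, so assume $t_0\ge 1$. For every $i<t_0$, by Definition~\ref{definition} the module $\check{\mathfrak{F}}^i_{\mathfrak{a},\mathfrak{m},J}(M)$ is Artinian. Exactly as in the implication $(a)\Rightarrow(b)$ of Theorem~4.1, an Artinian module over the local ring $R$ carrying the natural $\mathfrak{a}$-adic structure coming from the inverse limit satisfies $\mathfrak{a}^j \check{\mathfrak{F}}^i_{\mathfrak{a},\mathfrak{m},J}(M)=0$ for some $j$, hence $\mathfrak{a}\subseteq \operatorname{Rad}(0:\check{\mathfrak{F}}^i_{\mathfrak{a},\mathfrak{m},J}(M))$. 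This shows that no index $i<t_0$ lies in the set defining $s$, so $s\ge t_0$.

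For $t_0\le s$: for every $i<s$ we have $\mathfrak{a}\subseteq \operatorname{Rad}(0:\check{\mathfrak{F}}^i_{\mathfrak{a},\mathfrak{m},J}(M))$ by definition of $s$. Applying Remark~\ref{remark5.}(2) with $t=s$, we conclude $\check{\mathfrak{F}}^i_{\mathfrak{a},\mathfrak{m},J}(M)$ is Artinian for every $i<s$. Consequently the infimum defining $t_0$ is at least $s$, giving $t_0\ge s$. Combining both inequalities yields the desired equality.

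There is no real obstacle: the whole argument is a packaging of Remark~\ref{remark5.}(2) in terms of the ``depth'' invariant. The only subtlety is the observation, already used in the proof of Theorem~4.1, that Artinianness of $\check{\mathfrak{F}}^i_{\mathfrak{a},\mathfrak{m},J}(M)$ in the local setting upgrades to annihilation by a power of $\mathfrak{a}$; this is what supplies the direction $s\le t_0$ and is the technical ingredient one must not skip when invoking the corresponding half of Remark~\ref{remark5.}(2).
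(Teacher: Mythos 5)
Your proposal is correct and matches what the paper intends: the corollary is placed immediately after Remark~4.3(2) with no written proof, precisely because it is the direct packaging of that remark as an equality of infima, which is what you do. One cosmetic note: the headers of your two paragraphs are swapped relative to what each paragraph actually establishes (the first proves $s\ge t_0$, the second proves $t_0\ge s$), but the mathematical content inside each paragraph is sound and the two halves combine correctly.
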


In the following we give an interesting result that improves \cite[Proposition 2.1]{majidp}, \cite[Theorem 3.1]{ffdepth1} and \cite[Theorem 2.1, Theorem 2.2]{artop}. Before that, remember that a prime ideal ${\mathfrak p}$ of $R$ is an {\it atteched prime ideal of} $M$ if, for every finitely genereated ideal $I\subseteq {\mathfrak p}$, there exists $x\in M$ such that $I\subseteq (0:_Rx)\subset {\mathfrak p}$.  Denote  by ${\rm Att}_R(M)$ the set of attached prime ideals of $R$-module $M$.

\begin{thm}\label{top}
Assume $\mathfrak{a}$ be an ideal of  $(R, \mathfrak{m})$. Let $M$ be a finitely generated $R$-module with $\rm{dim} M=d $. Then $\mathfrak{F}^d_{\mathfrak{a},I,J}(M)$ is Artinian for all ideals $I,J$ of $R$. Furthermore
$${\rm Att}_R(\mathfrak{F}^d_{\mathfrak{a},I,J}(M))= \{\mathfrak{p}\in {\rm Supp}_R M\cap V(J) \mid {\rm cd}(I,R/\mathfrak{p})=d\}\cap V(\mathfrak{a}),$$
where ${\rm cd}(I,R/\mathfrak{p})$ is the cohomological dimension of the $R$-module $R/\mathfrak{p}$ with respect to $I$.

\noindent In particular ${\rm Att}_R(\mathfrak{F}^d_{\mathfrak{a},I,J}(M))\subset {\rm Att}_R(\mathfrak{F}^d_{\mathfrak{a},I}(M)).$

\end{thm}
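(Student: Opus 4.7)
The plan is to view $\mathfrak{F}^d_{\mathfrak{a},I,J}(M)$ as the inverse limit of $A_n := H^d_{I,J}(M/\mathfrak{a}^n M)$ and push both assertions through Matlis duality. First I would reduce to the case where $R$ is $\mathfrak{m}$-adically complete via \cite[Theorem 2.3]{paper}, so that Matlis duality is a faithful exact contravariant equivalence between Artinian and Noetherian modules.

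Since $\dim(M/\mathfrak{a}^n M) \leq d$, Chu--Wang's theorem \cite{art3} on top local cohomology with respect to a pair of ideals gives that each $A_n$ is Artinian with
\begin{equation*}
{\rm Att}_R(A_n) \;=\; \{\mathfrak{p} \in {\rm Supp}_R(M/\mathfrak{a}^n M) \cap V(J) \mid {\rm cd}(I, R/\mathfrak{p}) = d\}.
\end{equation*}
Using ${\rm Supp}_R(M/\mathfrak{a}^n M) = {\rm Supp}_R M \cap V(\mathfrak{a})$, this attached prime set is the \emph{fixed} finite set
\begin{equation*}
V \;:=\; \{\mathfrak{p} \in {\rm Supp}_R M \cap V(J) \mid {\rm cd}(I, R/\mathfrak{p}) = d\} \cap V(\mathfrak{a}),
\end{equation*}
independent of $n$. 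To see that the transitions are surjective, I would apply $H^\bullet_{I,J}$ to $0 \to \mathfrak{a}^n M/\mathfrak{a}^{n+1} M \to M/\mathfrak{a}^{n+1} M \to M/\mathfrak{a}^n M \to 0$; the vanishing $H^{d+1}_{I,J}(\mathfrak{a}^n M/\mathfrak{a}^{n+1} M) = 0$, forced by the dimension bound, makes $A_{n+1} \twoheadrightarrow A_n$ onto.

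Next I would dualize with $(-)^\vee = {\rm Hom}_R(-, E_R(\mathbb{K}))$ to obtain a direct system of Noetherian modules $\{A_n^\vee\}$ with injective transitions, $\mathfrak{F}^d_{\mathfrak{a},I,J}(M)^\vee \cong B := \varinjlim_n A_n^\vee$, and ${\rm Ass}_R(A_n^\vee) = {\rm Att}_R(A_n) = V$ for every $n$. Since ${\rm Ass}_R$ commutes with direct limits of monomorphisms, ${\rm Ass}_R(B) = V$. Once $B$ is shown to be finitely generated, Matlis reflexivity forces $\mathfrak{F}^d_{\mathfrak{a},I,J}(M) \cong B^\vee$ to be Artinian and yields ${\rm Att}_R(\mathfrak{F}^d_{\mathfrak{a},I,J}(M)) = {\rm Ass}_R(B) = V$, which is the stated formula.

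The main obstacle is the finiteness of $B$. I plan to handle it by producing a single finitely generated $R$-module $T$ -- a dualizing object tailored to the pair $(I,J)$, in the spirit of the Ext--description recorded in Section 3 of the excerpt -- together with compatible identifications $A_n^\vee \cong {\rm Hom}_R(M/\mathfrak{a}^n M, T)$. Under such identifications the transitions become the inclusions $ {\rm Hom}_R(M/\mathfrak{a}^n M, T) \hookrightarrow {\rm Hom}_R(M/\mathfrak{a}^{n+1} M, T)$ inside the Noetherian module ${\rm Hom}_R(M, T)$, and $B = \Gamma_{\mathfrak{a}}({\rm Hom}_R(M, T))$ is automatically Noetherian. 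The "in particular" assertion is then immediate: setting $J = 0$ in the displayed formula drops the constraint $\mathfrak{p} \in V(J)$, so the attached prime set for the pair case is always contained in the $J=0$ set ${\rm Att}_R(\mathfrak{F}^d_{\mathfrak{a},I}(M))$.
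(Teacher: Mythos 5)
Your argument is genuinely different from the paper's, but it has a gap at exactly the point you flag yourself.

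The paper's proof is more direct and sidesteps the finiteness problem entirely. After reducing to ${\rm Ann}_R M = 0$ (so $\dim R = d$), it uses the vanishing $H^i_{I,J}(-) = 0$ for $i > d$ and the right-exactness of the top functor (Watt's theorem, via [Lemma 4.8] of Takahashi--Yoshino--Yoshizawa) to obtain the key isomorphism $H^d_{I,J}(M/\mathfrak{a}^n M) \cong H^d_{I,J}(M)/\mathfrak{a}^n H^d_{I,J}(M)$. Since $H^d_{I,J}(M)$ is Artinian by Chu--Wang, the descending chain $\{\mathfrak{a}^n H^d_{I,J}(M)\}$ stabilizes at some $n_0$, so the inverse system is eventually constant and $\mathfrak{F}^d_{\mathfrak{a},I,J}(M) \cong H^d_{I,J}(M)/\mathfrak{a}^{n_0} H^d_{I,J}(M)$ is Artinian. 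The attached-prime formula then falls out of Chu's description of ${\rm Att}_R H^d_{I,J}(M)$ together with ${\rm Att}_R(A/\mathfrak{a}^{n_0} A) = {\rm Att}_R(A) \cap V(\mathfrak{a})$ (Melkersson--Schenzel). That stabilization step is the heart of the matter, and you have no analogue of it.

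Concretely, your proposed construction of a ``single finitely generated $R$-module $T$ -- a dualizing object tailored to the pair $(I,J)$'' with $A_n^\vee \cong {\rm Hom}_R(M/\mathfrak{a}^n M, T)$ is not available in the generality of the theorem. The Ext/Hom description you allude to from Section 3 requires $(R,\mathfrak{m})$ to be Cohen--Macaulay complete and $J$ perfect; the theorem has no such hypotheses on $R$, $I$, or $J$. Without such a $T$, the finiteness of $B = \varinjlim_n A_n^\vee$ is exactly what needs proving, and you have not proved it. (Everything preceding it -- each $A_n$ Artinian with the fixed attached-prime set $V$, surjectivity of the transitions from the vanishing of $H^{d+1}_{I,J}$, the passage to $B$ with injective transitions and ${\rm Ass}_R(B) = V$, Matlis reflexivity once $B$ is known to be Noetherian, and the ``in particular'' inclusion -- is fine, but it all hangs on the unproved step.) If you want to salvage this route, the cleanest fix is to import the paper's isomorphism $A_n \cong H^d_{I,J}(M)/\mathfrak{a}^n H^d_{I,J}(M)$: then $A_n^\vee$ is the submodule $(0 :_{H^d_{I,J}(M)^\vee} \mathfrak{a}^n)$ of the Noetherian $\hat R$-module $H^d_{I,J}(M)^\vee$, so $B = \Gamma_{\mathfrak{a}}(H^d_{I,J}(M)^\vee)$ is automatically finitely generated, and your dualization argument closes. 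But at that point you have essentially reproduced the paper's computation, so the detour through Matlis duality buys nothing beyond repackaging the attached-prime identification.
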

\begin{proof} Let $\overline{R}=R/{\rm Ann}_R M$. Since $H^d_{I,J}(M)\cong H^d_{I\overline{R},J\overline{R}}(M)$ \cite[Lemma 2.1]{artop},  we can consider that $\rm{Ann}M=0$ and so $d=\rm{dim} R$.
Since $H^i_{I,J}(R)=0$ for all $i>d$  \cite[Theorem 4.7]{art1}, and by \cite[Lemma 4.8]{art1} follows that

$$H^d_{I,J}(M/\mathfrak{a}^nM)\cong H^d_{I,J}(R)\otimes_R M/\mathfrak{a}^nM\cong H^d_{I,J}(M)\otimes_R R/\mathfrak{a}^n$$
$$\cong H^d_{I,J}(M)/\mathfrak{a}^n H^d_{I,J}(M).$$

By \cite[Theorem 2.1]{art3}, $H^d_{I,J}(M)$ is Artinian $R$-module. Then, there exist an integer $n_0$ such that for all integer $t\geq n_0$; we have
$ \mathfrak{a}^t H^d_{I,J}(M)= \mathfrak{a}^{n_0} H^d_{I,J}(M).$ Therefore, since
$$\mathfrak{F}^d_{\mathfrak{a},I,J}(M)\cong H^d_{I,J}(M)/\mathfrak{a}^{n_0} H^d_{I,J}(M)$$ follows that is an Artinian $R$-module.
For the second claim use \cite[Proposition 5.2]{melschen}, \cite[Theorem 2.1]{artop} and  the previous isomorphism.

\end{proof}

\begin{thm}\label{top1}
Assume that $\mathfrak{a} $ and $J$ are ideals of  $(R, \mathfrak{m})$. Let $M$ be a finitely generated $R$-module with $\rm{dim} M=d $. Then
$${\rm Att}_R(\mathfrak{F}^d_{\mathfrak{a},\mathfrak{m},J}(M))= \{\mathfrak{p}\in {\rm Supp}_R M\cap V(J) \mid \dim R/{\mathfrak p}=d\}\cap V(\mathfrak{a}).$$
\end{thm}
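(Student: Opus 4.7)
The plan is to deduce this as an essentially immediate corollary of the preceding Theorem \ref{top}. Specializing that theorem to the case $I = \mathfrak{m}$ gives
$${\rm Att}_R(\mathfrak{F}^d_{\mathfrak{a},\mathfrak{m},J}(M)) = \{\mathfrak{p} \in {\rm Supp}_R M \cap V(J) \mid {\rm cd}(\mathfrak{m}, R/\mathfrak{p}) = d\} \cap V(\mathfrak{a}),$$
so the whole content of Theorem \ref{top1} reduces to verifying, for each prime $\mathfrak{p} \in {\rm Supp}_R M$, the equality
$${\rm cd}(\mathfrak{m}, R/\mathfrak{p}) \;=\; \dim R/\mathfrak{p}.$$

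This is a classical fact. Since $(R,\mathfrak{m})$ is local and $R/\mathfrak{p}$ is a finitely generated $R$-module, Grothendieck's vanishing theorem gives $H^i_{\mathfrak{m}}(R/\mathfrak{p}) = 0$ for all $i > \dim R/\mathfrak{p}$, while Grothendieck's non-vanishing theorem for finite modules over a local ring yields $H^{\dim R/\mathfrak{p}}_{\mathfrak{m}}(R/\mathfrak{p}) \neq 0$. Combining these two, the cohomological dimension of $R/\mathfrak{p}$ with respect to $\mathfrak{m}$ is exactly $\dim R/\mathfrak{p}$.

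Substituting this equality into the formula coming from Theorem \ref{top} produces exactly the description claimed in the statement. The condition $\mathfrak{p} \in V(\mathfrak{a})$ is already part of the output of Theorem \ref{top} and carries over unchanged. I do not anticipate any genuine obstacle in this argument: the only subtlety, which is harmless, is to make sure one is quoting the non-vanishing theorem for the correct module ($R/\mathfrak{p}$, which is a finite $R$-module with $\dim R/\mathfrak{p} \leq \dim R < \infty$), so both of Grothendieck's classical theorems apply directly.
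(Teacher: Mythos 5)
Your argument is correct and is essentially the route the paper takes: the paper cites Chu's Theorem~2.2 in \cite{artop} together with Theorem~\ref{top}, whereas you simply specialize Theorem~\ref{top} to $I=\mathfrak{m}$ and re-derive the needed identity ${\rm cd}(\mathfrak{m},R/\mathfrak{p})=\dim R/\mathfrak{p}$ directly from Grothendieck's vanishing and non-vanishing theorems, which is exactly the content underlying the cited result. Your version is marginally more self-contained but logically identical.
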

\begin{proof} The proof follows by \cite[Theorem 2.2]{artop} and Theorem \ref{top}.

\end{proof}

Now an immediate consequence of the Theorem \ref{top}.
\begin{cor}\label{mesmosuporte}Let $\mathfrak{a}, I, J$ be ideals of  $(R, \mathfrak{m})$. Let $M$ and $N$ be two finitely generated $R$-modules of dimension $d$ such that ${\rm Supp}_R M= {\rm Supp}_R N$. Then ${\rm Att}_R(\mathfrak{F}^d_{\mathfrak{a},I,J}(M))= {\rm Att}_R(\mathfrak{F}^d_{\mathfrak{a},I,J}(N))$.

\end{cor}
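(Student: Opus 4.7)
The plan is to reduce this immediately to Theorem \ref{top} and exploit the fact that the formula given there for the attached primes of $\mathfrak{F}^d_{\mathfrak{a},I,J}(-)$ depends on the module only through its support. Applying that theorem to $M$ we obtain
$${\rm Att}_R(\mathfrak{F}^d_{\mathfrak{a},I,J}(M)) = \{\mathfrak{p}\in {\rm Supp}_R M\cap V(J) \mid {\rm cd}(I,R/\mathfrak{p})=d\}\cap V(\mathfrak{a}),$$
and likewise for $N$. Both top indices are the same because $\dim M = \dim N = d$ is part of the hypothesis (and is in any case forced by ${\rm Supp}_R M = {\rm Supp}_R N$, since $\dim M = \sup\{\dim R/\mathfrak{p} \mid \mathfrak{p}\in {\rm Supp}_R M\}$).

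The next step is to observe that the right hand side of the formula involves $M$ only through the set ${\rm Supp}_R M$; the conditions ${\rm cd}(I,R/\mathfrak{p})=d$, $\mathfrak{p}\in V(J)$ and $\mathfrak{p}\in V(\mathfrak{a})$ are all intrinsic to the prime $\mathfrak{p}$ and independent of the module. Hence substituting ${\rm Supp}_R N$ for ${\rm Supp}_R M$ in the formula changes nothing, and the two sets of attached primes coincide.

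I do not foresee any real obstacle: once Theorem \ref{top} is established, the corollary is a direct bookkeeping consequence. The only thing worth spelling out in the write-up is the explicit chain of equalities
$${\rm Att}_R(\mathfrak{F}^d_{\mathfrak{a},I,J}(M)) = \{\mathfrak{p}\in {\rm Supp}_R M\cap V(J)\cap V(\mathfrak{a}) \mid {\rm cd}(I,R/\mathfrak{p})=d\}$$
$$= \{\mathfrak{p}\in {\rm Supp}_R N\cap V(J)\cap V(\mathfrak{a}) \mid {\rm cd}(I,R/\mathfrak{p})=d\} = {\rm Att}_R(\mathfrak{F}^d_{\mathfrak{a},I,J}(N)),$$
to make it transparent that the support hypothesis is doing all the work.
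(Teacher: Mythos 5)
Your argument is correct and matches the paper's intent exactly: the corollary is stated there as an immediate consequence of Theorem \ref{top}, and your observation that the formula for $\mathrm{Att}_R(\mathfrak{F}^d_{\mathfrak{a},I,J}(-))$ depends on the module only through its support is precisely the reason. Nothing further is needed.
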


 \noindent In some results, we will use the following elementary lemma.

\begin{lem}\label{lemma0} Let $R$ be a commutative ring, and let $M$ and $N$ be $R$-modules.
Then ${\rm Ann}_R(M) \cup {\rm Ann}_R(N)\subset {\rm Ann}_R{\rm Ext}^i_R(M,N)$ for all $i$.
\end{lem}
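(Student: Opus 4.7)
The plan is to exploit the functoriality of $\mathrm{Ext}$ in both variables together with the standard fact that, for any $r\in R$, the endomorphism of $\mathrm{Ext}^i_R(M,N)$ induced by multiplication-by-$r$ on $M$ (or on $N$) coincides with the $R$-module action by $r$ on $\mathrm{Ext}^i_R(M,N)$. Granted this, if $r\in\mathrm{Ann}_R(M)$ then $\mu_r^M\colon M\to M$ is the zero map, its functorial image on $\mathrm{Ext}^i_R(M,N)$ is the zero map, and simultaneously this image equals multiplication by $r$; so $r\in\mathrm{Ann}_R\mathrm{Ext}^i_R(M,N)$. The case $r\in\mathrm{Ann}_R(N)$ is symmetric.

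To verify the coincidence, I would fix a projective resolution $P_\bullet\to M$ and use $\mathrm{Ext}^i_R(M,N)=H^i(\mathrm{Hom}_R(P_\bullet,N))$. Multiplication by $r$ on $N$ induces, by post-composition, multiplication by $r$ on each $\mathrm{Hom}_R(P_j,N)$, hence on cohomology; this is the standard $R$-module structure on $\mathrm{Ext}^i_R(M,N)$. On the other hand, $\mu_r^M$ admits as a chain lift the map $(r\cdot\mathrm{id}_{P_j})_j$ on $P_\bullet$, and any two lifts are chain-homotopic, so the induced map on $\mathrm{Hom}_R(P_\bullet,N)$ (obtained by precomposition) agrees up to coboundaries with multiplication by $r$. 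Passing to cohomology, both procedures give the same scalar multiplication by $r$ on $\mathrm{Ext}^i_R(M,N)$.

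There is no real obstacle here: the lemma is a purely formal consequence of the functoriality of $\mathrm{Ext}$ and of the agreement of the two induced actions. The only point worth checking is that scalar multiplication by $r$ is itself a valid chain lift of $\mu_r^M$, which is immediate since the differentials in $P_\bullet$ are $R$-linear, whence uniqueness up to homotopy closes the argument with no computation.
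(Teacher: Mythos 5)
Your proof is correct and follows essentially the same approach as the paper: both use functoriality of $\mathrm{Ext}$ in each variable together with the fact that multiplication by $r$ on $M$ (or $N$) induces multiplication by $r$ on $\mathrm{Ext}^i_R(M,N)$, which vanishes when $r$ annihilates the relevant module. You simply supply more detail on why the induced map is scalar multiplication (via chain lifts on a projective resolution), which the paper takes as standard.
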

\begin{proof}
Let $x \in {\rm Ann}_R(M) \cup {\rm Ann}_R(N)$, and let $\chi^N_x: N \to N$ defined by
$n\mapsto xn$. This map induced a map
$${\rm Ext}^i_R(M,\chi^N_x):{\rm Ext}^i_R(M,N)\to {\rm Ext}^i_R(M,N)$$
is given by multiplication by $x$.
Assume now that $x\in {\rm Ann}_R(N)$. Then the map $\chi^N_x$
is the zero map, so  implies that the induced map ${\rm Ext}^i_R(M,\chi^N_x)$ is the zero-map. In other words, multiplication by $x$ on ${\rm Ext}^i_R(M,N)$ is zero, as desired.
The case where $x\in {\rm Ann}_R(M)$ is handled similarly, using the map $\chi^M_x$.
\end{proof}

For the next result, remember that a non-zero $R$-module $S$ is called secondary when $S\neq 0$, and for each $r\in R$, either $rS=S$ or there exist $n \in \mathbb{N}$ such that $r^nS=0$. A secondary representation for an $R$-module $M$ is an expression for $M$ as a finite sum of secondary submodules of $M$. If such a representation exists, we say that $M$ is {\it representable}. If $M$ admits a minimal secondary representation $M= S_1+\ldots+ S_n$, then the set ${\rm Att}_R(M)$ of attached primes of $M$ is the set $\{{\rm Rad}(0:_R S_i) \ : i=1,\ldots,n  \}$  \cite[Definition 7.2.2]{grot}. The next result generalizes \cite[Theorem 2.3]{ffdepth1}.

\begin{thm} Let $\mathfrak{a},I,J$ be ideals of $(R,\mathfrak{m})$. Let $M$ be a finitely generated $R$-module. If $\mathfrak{F}^i_{\mathfrak{a},I,J}(M)$ is non-zero and representable for an integer $i$, then $\mathfrak{a}\subset \mathfrak{p}$ for all $\mathfrak{p}\in {\rm Att}_R(\mathfrak{F}^i_{\mathfrak{a},I,J}(M))$.

\end{thm}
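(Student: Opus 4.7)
The plan is to argue by contradiction, using the inverse limit presentation $\mathfrak{F}^i_{\mathfrak{a},I,J}(M)=\varprojlim H^i_{I,J}(M/\mathfrak{a}^nM)$ together with the defining property of secondary modules. Write $\mathfrak{F}:=\mathfrak{F}^i_{\mathfrak{a},I,J}(M)$ and fix a minimal secondary representation $\mathfrak{F}=S_1+\cdots+S_k$, so that ${\rm Att}_R(\mathfrak{F})=\{\mathfrak{p}_j:={\rm Rad}(0:_R S_j):j=1,\ldots,k\}$. I will show that if $\mathfrak{a}\not\subseteq \mathfrak{p}_j$ for some $j$ then $S_j=0$, contradicting the fact that a secondary summand is by definition non-zero.

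The key input is the observation that $\mathfrak{a}^n$ annihilates $M/\mathfrak{a}^nM$ and therefore annihilates $H^i_{I,J}(M/\mathfrak{a}^nM)$. This is essentially Lemma \ref{lemma0} applied to a presentation of $H^i_{I,J}$ as a direct limit of Ext modules (in the spirit of \cite{art1}); more concretely, multiplication by $r\in R$ on an $R$-module induces multiplication by $r$ on its local cohomology, so $\mathfrak{a}^n\cdot (M/\mathfrak{a}^nM)=0$ forces $\mathfrak{a}^n\cdot H^i_{I,J}(M/\mathfrak{a}^nM)=0$. Let $\pi_n\colon \mathfrak{F}\to H^i_{I,J}(M/\mathfrak{a}^nM)$ denote the canonical map from the inverse limit; then $\pi_n(a^ns)=a^n\pi_n(s)=0$ for every $a\in\mathfrak{a}$ and every $s\in \mathfrak{F}$.

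Now pick $a\in \mathfrak{a}\setminus \mathfrak{p}_j$. Since $S_j$ is $\mathfrak{p}_j$-secondary, $aS_j=S_j$, and hence $a^nS_j=S_j$ for every $n\geq 1$. Consequently, any $s\in S_j$ can be written as $s=a^nt_n$ with $t_n\in S_j$, which yields $\pi_n(s)=a^n\pi_n(t_n)=0$ for every $n$. Because an element of $\varprojlim H^i_{I,J}(M/\mathfrak{a}^nM)$ is determined by its coordinates under the maps $\pi_n$, this forces $s=0$, so $S_j=0$, giving the desired contradiction and proving $\mathfrak{a}\subset \mathfrak{p}_j$ for every attached prime.

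The most delicate step is the annihilator assertion $\mathfrak{a}^nH^i_{I,J}(M/\mathfrak{a}^nM)=0$: once this is secured via \cite{art1} (or by direct application of Lemma \ref{lemma0} to the Ext-description of local cohomology with respect to a pair of ideals), the rest of the argument is completely formal, and no control over the transition maps in the inverse system beyond $R$-linearity is needed.
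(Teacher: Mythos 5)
Your proposal is correct and follows essentially the same strategy as the paper's proof: both fix a minimal secondary representation, pick $a\in\mathfrak{a}$ outside an attached prime, use $aS_j=S_j$ to push elements of $S_j$ into $a^n\mathfrak{F}^i_{\mathfrak{a},I,J}(M)$, and then exploit the fact (Lemma~\ref{lemma0}, or just $R$-linearity of $H^i_{I,J}(-)$) that $a^n$ annihilates $H^i_{I,J}(M/\mathfrak{a}^nM)$ to kill the $n$-th coordinate in the inverse limit. The only cosmetic difference is that you conclude $S_j=0$ directly (contradicting that a secondary module is non-zero), whereas the paper fixes a non-zero element of $S_j$ and contradicts the choice of its first non-vanishing component; the mathematical content is the same.
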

\begin{proof}
Let $\mathfrak{F}^i_{\mathfrak{a},I,J}(M)= S_1+\ldots+S_n$ be a minimal secondary representation of $\mathfrak{F}^i_{\mathfrak{a},I,J}(M)$, where $S_k$ is $\mathfrak{p}_k$-secondary for $k=1,\ldots,n$. Suppose by contradiction that $\mathfrak{a}\not\subseteq p_k$ for an integer $k\in \{1,\ldots,n\}$. Then, there is an element $x\in \mathfrak{a}\setminus \mathfrak{p}_j$. We may assume an element $0\neq y=(y_j)\in S_k \subset \mathfrak{F}^i_{\mathfrak{a},I,J}(M)$ with $y_j\in S_k$ be the first non-zero component of $y$.

Note that $xS_K=S_K$ because $x\not\in \mathfrak{p}$. Since $x^jS_k\subseteq x^j\mathfrak{F}^i_{\mathfrak{a},I,J}(M)$, we obtain that $S_k\subseteq x^j\mathfrak{F}^i_{\mathfrak{a},I,J}(M)$. Furthermore $x^jH^i_{I,J}(M/\mathfrak{a}^jM)=0$ by Lemma \ref{lemma0}, then we can conclude that the $j$-th component of each element of $x^j\mathfrak{F}^i_{\mathfrak{a},I,J}(M)$. Therefore, we have the contradiction because $y\in x^j\mathfrak{F}^i_{\mathfrak{a},I,J}(M)$ and $y_j\in S_k$ is non-zero.

\end{proof}
An immediate consequence of the previous results is to show that the formal local cohomology modules with respect to a pair of ideals, when are representable, are $\mathfrak{a}$-torsion modules.
\begin{cor}With the same hypothesis of the previous theorem, if $\mathfrak{F}^i_{\mathfrak{a},I,J}(M)$ is non-zero and representable, for an integer $i$, then $\mathfrak{a}\subseteq {\rm Rad}(0: \mathfrak{F}^i_{\mathfrak{a},I,J}(M)).$
\end{cor}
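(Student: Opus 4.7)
The plan is to combine the previous theorem with the standard description of the radical of the annihilator of a representable module in terms of its attached primes. Specifically, for any representable $R$-module $N$ with minimal secondary representation $N = S_1 + \cdots + S_n$, where $S_k$ is $\mathfrak{p}_k$-secondary, one has $\mathrm{Rad}(0:_R S_k) = \mathfrak{p}_k$, and therefore
\[
\mathrm{Rad}(0:_R N) \;=\; \bigcap_{k=1}^n \mathrm{Rad}(0:_R S_k) \;=\; \bigcap_{\mathfrak{p}\in \mathrm{Att}_R(N)} \mathfrak{p}.
\]
This is a purely formal consequence of the definition of secondary representation; see \cite[Definition 7.2.2]{grot}.

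Applying this identity to $N = \mathfrak{F}^i_{\mathfrak{a},I,J}(M)$, which is representable by hypothesis, I obtain
\[
\mathrm{Rad}\bigl(0:_R \mathfrak{F}^i_{\mathfrak{a},I,J}(M)\bigr) \;=\; \bigcap_{\mathfrak{p}\in \mathrm{Att}_R(\mathfrak{F}^i_{\mathfrak{a},I,J}(M))} \mathfrak{p}.
\]
By the previous theorem, every prime $\mathfrak{p} \in \mathrm{Att}_R(\mathfrak{F}^i_{\mathfrak{a},I,J}(M))$ satisfies $\mathfrak{a}\subseteq\mathfrak{p}$. Intersecting over all such primes yields $\mathfrak{a}\subseteq \mathrm{Rad}(0:_R\mathfrak{F}^i_{\mathfrak{a},I,J}(M))$, which is the desired conclusion.

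There is essentially no obstacle here: the statement is a direct corollary, with the only content being the general fact about radicals of annihilators of representable modules. I would simply cite this fact from \cite{grot} (or sketch it in one line from the definition of a $\mathfrak{p}_k$-secondary module) and then quote the previous theorem to finish.
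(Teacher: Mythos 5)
Your proof is correct and is essentially identical to the paper's: both invoke the standard identity $\mathrm{Rad}(0:_R N)=\bigcap_{\mathfrak{p}\in\mathrm{Att}_R(N)}\mathfrak{p}$ for a representable module $N$ (the paper cites \cite[Proposition 7.2.11]{grot}) and then apply the previous theorem to conclude $\mathfrak{a}$ is contained in every attached prime. There is nothing to add.
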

\begin{proof}Since ${\rm Rad}(0: \mathfrak{F}^i_{\mathfrak{a},I,J}(M))= \cap_{\mathfrak{p}\in {\rm Att}_R(\mathfrak{F}^i_{\mathfrak{a},I,J}(M)) }\mathfrak{p}$ by the same idea of \cite[Proposition 7.2.11]{grot} and by previous theorem
$\mathfrak{a}\subseteq \cap_{\mathfrak{p}\in{\rm Att}_R(\mathfrak{F}^i_{\mathfrak{a},I,J}(M))}\mathfrak{p}$ the statement is true.

\end{proof}

\begin{rem}By previous corollary we can conclude that, if $\mathfrak{F}^i_{\mathfrak{a},I,J}(M))$ is Artinian for all $i>t$ (respectively $i<t$), then $\mathfrak{a}\subseteq {\rm Rad}(0: \mathfrak{F}^i_{\mathfrak{a},I,J}(M))$ for all $i>t$ (respectively $i<t$), since any Artinian $R$-module is representable. Therefore we have that
$$ {\rm sup}\{i:\mathfrak{a} \not\subseteq {\rm Rad}(0: \mathfrak{\check{F}}^i_{\mathfrak{a},\mathfrak{m},J}(M))\}\leq {\rm g{\check g}\!-\!depth}(\mathfrak{a},\mathfrak{m},J,M)\leq {\rm dim} M/(J+\mathfrak{a})M$$
and
$$ 0\leq {\rm ff\!-\!depth}(\mathfrak{a},\mathfrak{m},J,M)\leq {\rm inf}\{i:\mathfrak{a} \not\subseteq {\rm Rad}(0: \mathfrak{\check{F}}^i_{\mathfrak{a},\mathfrak{m},J}(M))\}.$$

\noindent Remember that in this case we are not assuming that $\Gamma_{\mathfrak{a}}(M)$ is an $J$-torsion $R$-module.
The reader can compare this remark with Remark \ref{remark5.}.


\end{rem}



\noindent In the following we give one of our main results. This result is a generalization of \cite[Theorem 2.9]{131} and \cite[Threom 2.4]{finiteness}.

\begin{thm}\label{artquoc}Let  $\mathfrak{a},J$  be two ideals of $(R,\mathfrak{m})$. Let $M$ be a finitely generated $R$-module such that $\Gamma_{\mathfrak{a}}(M)$ is a $J$-torsion. If for some integer $t$, $\check{\mathfrak{F}}^i_{\mathfrak{a}, \mathfrak{m}, J}(M)$ is Artinian for all $i>t$, then $\check{\mathfrak{F}}^t_{\mathfrak{a}, \mathfrak{m}, J}(M)/ \mathfrak{a}\check{\mathfrak{F}}^t_{\mathfrak{a}, \mathfrak{m}, J}(M)$ is Artinian.
\end{thm}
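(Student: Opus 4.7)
The plan is to proceed by induction on $d = \dim M$. The base case $d = 0$ is immediate: $M$ is Artinian, so every $\check{\mathfrak{F}}^i_{\mathfrak{a}, \mathfrak{m}, J}(M)$ is Artinian, and in particular so is the desired quotient.

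For the inductive step with $d > 0$, I first reduce to the case $\Gamma_\mathfrak{a}(M) = 0$ by the same device used in the proof of Theorem 4.1 above. The short exact sequence $0 \to \Gamma_\mathfrak{a}(M) \to M \to M/\Gamma_\mathfrak{a}(M) \to 0$ produces a long exact sequence of \v{C}ech formal local cohomology, and since $\Gamma_\mathfrak{a}(M)$ is $\mathfrak{a}$-torsion and (by hypothesis) $J$-torsion, one has $\check{\mathfrak{F}}^i_{\mathfrak{a}, \mathfrak{m}, J}(\Gamma_\mathfrak{a}(M)) \cong H^i_\mathfrak{m}(\Gamma_\mathfrak{a}(M))$, which is Artinian for every $i$. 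Chasing the long exact sequence and tensoring with $R/\mathfrak{a}$ shows that it suffices to prove the statement after replacing $M$ by $M/\Gamma_\mathfrak{a}(M)$.

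Next, I choose an $M$-regular element $x \in \mathfrak{a}$ (available because $\Gamma_\mathfrak{a}(M) = 0$) and apply the long exact sequence attached to $0 \to M \xrightarrow{x} M \to M/xM \to 0$. For each $i > t$, the module $\check{\mathfrak{F}}^i_{\mathfrak{a}, \mathfrak{m}, J}(M/xM)$ is sandwiched between two Artinian modules and is therefore itself Artinian. Setting $B := \check{\mathfrak{F}}^t_{\mathfrak{a}, \mathfrak{m}, J}(M/xM)$, the same long exact sequence yields a short exact sequence
\begin{equation*}
0 \longrightarrow \check{\mathfrak{F}}^t_{\mathfrak{a}, \mathfrak{m}, J}(M)/x\check{\mathfrak{F}}^t_{\mathfrak{a}, \mathfrak{m}, J}(M) \longrightarrow B \longrightarrow K \longrightarrow 0,
\end{equation*}
where $K$ is the $x$-torsion submodule of the Artinian module $\check{\mathfrak{F}}^{t+1}_{\mathfrak{a}, \mathfrak{m}, J}(M)$; in particular $K$ is Artinian.

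Since $\dim M/xM = d-1$, the inductive hypothesis applied to $M/xM$ yields that $B/\mathfrak{a}B$ is Artinian. Tensoring the displayed short exact sequence with $R/\mathfrak{a}$ and invoking the $\operatorname{Tor}$ long exact sequence gives
\begin{equation*}
\operatorname{Tor}_1^R(K, R/\mathfrak{a}) \longrightarrow \check{\mathfrak{F}}^t_{\mathfrak{a}, \mathfrak{m}, J}(M) / \mathfrak{a} \check{\mathfrak{F}}^t_{\mathfrak{a}, \mathfrak{m}, J}(M) \longrightarrow B/\mathfrak{a}B,
\end{equation*}
where I have used the inclusion $x \in \mathfrak{a}$ to identify the middle term. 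The leftmost term is Artinian, because $K$ is and $R/\mathfrak{a}$ admits a projective resolution by finitely generated free $R$-modules; the rightmost is Artinian by induction. Hence the middle term is an extension of a submodule of an Artinian module by a quotient of an Artinian module, and therefore Artinian, completing the induction.

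The main obstacle is legitimately applying the inductive hypothesis to $M/xM$: this requires that $\Gamma_\mathfrak{a}(M/xM)$ be $J$-torsion, and the long exact sequence of $H^i_\mathfrak{a}$ only identifies $\Gamma_\mathfrak{a}(M/xM)$ with the $x$-torsion of $H^1_\mathfrak{a}(M)$, which is not obviously $J$-torsion from the ambient hypothesis. I would address this either by performing an additional inner reduction on $M/xM$ analogous to the initial reduction on $M$, or by strengthening the tracking of the $J$-torsion property through the filtration so that the induction closes cleanly.
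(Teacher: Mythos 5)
Your proposal follows essentially the same route as the paper: induction on $\dim M$, reduction to $\Gamma_{\mathfrak{a}}(M)=0$ via the sequence $0\to\Gamma_{\mathfrak{a}}(M)\to M\to M/\Gamma_{\mathfrak{a}}(M)\to 0$ and the identification $\check{\mathfrak{F}}^i_{\mathfrak{a},\mathfrak{m},J}(\Gamma_{\mathfrak{a}}(M))\cong H^i_{\mathfrak{m}}(\Gamma_{\mathfrak{a}}(M))$, passage to $M/xM$ for a regular element $x\in\mathfrak{a}$, and a Tor-chase through the resulting short exact sequences.

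The obstacle you flag at the end is a genuine one, and it is worth noting that the paper's own proof does not address it either: after reducing to $\Gamma_{\mathfrak{a}}(M)=0$ and choosing $x\in\mathfrak{a}\setminus\bigcup_{\mathfrak{p}\in\operatorname{Ass}M}\mathfrak{p}$, the paper invokes the induction hypothesis on $M/xM$ with no verification that $\Gamma_{\mathfrak{a}}(M/xM)\cong (0:_{H^1_{\mathfrak{a}}(M)}x)$ is $J$-torsion, which is exactly the hypothesis the theorem imposes and which is used in the reduction step (to pass from $H^i_{\mathfrak{m},J}(\Gamma_{\mathfrak{a}}(-))$ to the Artinian $H^i_{\mathfrak{m}}(\Gamma_{\mathfrak{a}}(-))$). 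So you have reproduced the paper's argument and, by explicitly identifying the place where the hypothesis is needed but not visibly preserved, you have actually been more careful than the source; if you want a clean statement that closes under the induction, one option is to strengthen the hypothesis to $M$ itself being $J$-torsion, since that property does pass to $M/xM$.
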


\begin{proof} We will prove by induction on ${\rm dim}M:=n$. When $n=0$, we have $\check{\mathfrak{F}}^0_{\mathfrak{a}, \mathfrak{m}, J}(M)\cong \mathfrak{F}^0_{\mathfrak{a}, \mathfrak{m}, J}(M)$ \cite[Proposition 4.1]{paper}. Then by Theorem \ref{top}, $\check{\mathfrak{F}}^0_{\mathfrak{a}, \mathfrak{m}, J}(M)$ is Artinian and the result is clear.
Now, we assume $n>0$ and that the claim is true for all values less than $n$. From the exact sequence
$$0\rightarrow \Gamma_{\mathfrak{a}}(M) \rightarrow M\rightarrow M/\Gamma_{\mathfrak{a}}(M) \rightarrow 0,$$
and \cite[Theorem 3.4]{paper} we have the long exact sequence
$$\cdots \rightarrow \mathfrak{\check{F}}^i_{\mathfrak{a},\mathfrak{m},J}(\Gamma_{\mathfrak{a}}(M))\rightarrow \mathfrak{\check{F}}^i_{\mathfrak{a},\mathfrak{m},J}(M)\rightarrow \mathfrak{\check{F}}^i_{\mathfrak{a},\mathfrak{m},J}( M/\Gamma_{\mathfrak{a}}(M))\rightarrow \mathfrak{\check{F}}^{i+1}_{\mathfrak{a},\mathfrak{m},J}(\Gamma_{\mathfrak{a}}(M))\rightarrow \cdots.$$
Since $\Gamma_{\mathfrak{a}}(M)$ is $\mathfrak{a}$-torsion, it  follows that
$$\mathfrak{\check{F}}^i_{\mathfrak{a},\mathfrak{m},J}(\Gamma_{\mathfrak{a}}(M))= H^i(\displaystyle \lim_{ ^{\longleftarrow} }(\check{C}_{\underline{x}, J}\otimes  \Gamma_{\mathfrak{a}}(M) / \mathfrak{a}^n \Gamma_{\mathfrak{a}}(M))\cong H^i_{\mathfrak{m},J}(\Gamma_{\mathfrak{a}}(M)).$$

\noindent By hypothesis, we have that $\mathfrak{\check{F}}^i_{\mathfrak{a},\mathfrak{m},J}( \Gamma_{\mathfrak{a}}(M))\cong H^i_{\mathfrak{m}}(\Gamma_{\mathfrak{a}}(M))$  is Artinian  for all $i$. So, $\mathfrak{\check{F}}^i_{\mathfrak{a},\mathfrak{m},J}( M/\Gamma_{\mathfrak{a}}(M))$ is Artinian for all $i>t$.

Now, we can consider the exact sequence
\begin{equation}\label{equation3}
H^t_{\mathfrak{m}}(\Gamma_{\mathfrak{a}}(M))\rightarrow \mathfrak{\check{F}}^t_{\mathfrak{a},\mathfrak{m},J}(M)\stackrel{\varphi}{\rightarrow} \mathfrak{\check{F}}^t_{\mathfrak{a},\mathfrak{m},J}( M/\Gamma_{\mathfrak{a}}(M))\stackrel{\psi}{\rightarrow} H^{t+1}_{\mathfrak{m}}(\Gamma_{\mathfrak{a}}(M))
\end{equation}

and split to the exact sequences

\begin{equation}\label{equation4}
\begin{array}{l}
0\rightarrow Ker \varphi \rightarrow \mathfrak{\check{F}}^t_{\mathfrak{a},\mathfrak{m},J}(M)\rightarrow Img \varphi \rightarrow 0 \ \ \ \ \ {\rm and}\\
{}\\
0\rightarrow Img \varphi \rightarrow \mathfrak{\check{F}}^i_{\mathfrak{a},\mathfrak{m},J}( M/\Gamma_{\mathfrak{a}}(M))\rightarrow Img \psi \rightarrow 0.
\end{array}
\end{equation}

From these sequences, we deduce the following exact sequences
$$ \frac{Ker \varphi}{\mathfrak{a}Ker \varphi} \rightarrow \frac{\mathfrak{\check{F}}^t_{\mathfrak{a},\mathfrak{m},J}(M)}{\mathfrak{a}\mathfrak{\check{F}}^t_{\mathfrak{a},\mathfrak{m},J}(M)}\rightarrow \frac{Img \varphi}{\mathfrak{a}Img \varphi}  \  \ \ \ \ \ \ \ (1) \ \ \ {\rm and}$$
$$ {\rm Tor}_1^R(R/\mathfrak{a},Img \psi) \rightarrow \frac{Img \varphi}{\mathfrak{a}Img \varphi}\rightarrow \frac{\mathfrak{\check{F}}^t_{\mathfrak{a},\mathfrak{m},J}( M/\Gamma_{\mathfrak{a}}(M))}{\mathfrak{a}\mathfrak{\check{F}}^t_{\mathfrak{a},\mathfrak{m},J}( M/\Gamma_{\mathfrak{a}}(M))}. \ \ \ (2)$$

Since $Ker \varphi$ and $Img \psi$ are Artinian, By the exact sequences (1) and (2), we obtain  that if $\frac{\mathfrak{\check{F}}^t_{\mathfrak{a},\mathfrak{m},J}( M/\Gamma_{\mathfrak{a}}(M))}{\mathfrak{a} \mathfrak{\check{F}}^t_{\mathfrak{a},\mathfrak{m},J}( M/\Gamma_{\mathfrak{a}}(M))}$ is Artinian, then
$\frac{\mathfrak{\check{F}}^t_{\mathfrak{a},\mathfrak{m},J}(M)}{\mathfrak{a}\mathfrak{\check{F}}^t_{\mathfrak{a},\mathfrak{m},J}(M)}$ is Artinian. So, we may assume that $\Gamma_{\mathfrak{a}}(M)=0$.

Let  $x\in \mathfrak{a}\setminus \cup_{\mathfrak{p}\in {\rm Ass}_R M}\mathfrak{p}$. It is known that ${\rm dim}M/xM=n-1$. Then, the short exact sequence
$$0\rightarrow M \stackrel{x}{\rightarrow} M \rightarrow M/xM \rightarrow 0$$
induces the following long exact sequence of formal local cohomology with respect $(\mathfrak{m},J)$.

\begin{equation}\label{equation5}
\cdots \rightarrow \mathfrak{\check{F}}^i_{\mathfrak{a},\mathfrak{m},J}(M)\stackrel{x}{\rightarrow} \mathfrak{\check{F}}^i_{\mathfrak{a},\mathfrak{m},J}(M)\stackrel{\alpha}{\rightarrow} \mathfrak{\check{F}}^i_{\mathfrak{a},\mathfrak{m},J}( M/xM)\stackrel{\beta}{\rightarrow} \mathfrak{\check{F}}^{i+1}_{\mathfrak{a},\mathfrak{m},J}(M)\rightarrow \cdots.
\end{equation}

Since $\mathfrak{\check{F}}^i_{\mathfrak{a},\mathfrak{m},J}(M)$ is Artinian for all $i>t$, we have that $\mathfrak{\check{F}}^i_{\mathfrak{a},\mathfrak{m},J}(M/xM)$ is Artinian for all $i>t$. Then by induction hypothesis, it follows that $\mathfrak{\check{F}}^t_{\mathfrak{a},\mathfrak{m},J}(M/xM)/J\mathfrak{\check{F}}^t_{\mathfrak{a},\mathfrak{m},J}(M/xM)$ is Artinian.
By the previous long exact sequence, we can consider the following short exact sequences
$$0\rightarrow Im \alpha \rightarrow \mathfrak{\check{F}}^t_{\mathfrak{a},\mathfrak{m},J}(M/xM)\rightarrow Im \beta \rightarrow 0 \ \ \ \ \ {\rm and}$$
$$ \mathfrak{\check{F}}^t_{\mathfrak{a},\mathfrak{m},J}(M) \stackrel{x}{\rightarrow} \mathfrak{\check{F}}^t_{\mathfrak{a},\mathfrak{m},J}(M)\rightarrow Im \alpha \rightarrow 0$$
that induces the following two exact sequences
$$ {\rm Tor}_1^R(R/\mathfrak{a},Img \beta) \rightarrow \frac{Img \alpha}{\mathfrak{a}Img \alpha}\rightarrow \frac{\mathfrak{\check{F}}^t_{\mathfrak{a},\mathfrak{m},J}( M/xM)}{\mathfrak{a}\mathfrak{\check{F}}^t_{\mathfrak{a},\mathfrak{m},J}( M/xM)} \ \ \ (3) \ \ \  {\rm and}$$
$$ \frac{\mathfrak{\check{F}}^t_{\mathfrak{a},\mathfrak{m},J}(M)}{\mathfrak{a} \mathfrak{\check{F}}^t_{\mathfrak{a},\mathfrak{m},J}(M)} \stackrel{x}{\rightarrow} \frac{\mathfrak{\check{F}}^t_{\mathfrak{a},\mathfrak{m},J}(M)}{\mathfrak{a}\mathfrak{\check{F}}^t_{\mathfrak{a},\mathfrak{m},J}(M)}\rightarrow \frac{Im \alpha}{\mathfrak{a} Im \alpha} \rightarrow 0. \ \ \ \ \ (4)$$
By choise of $x$, by (2)  follows that $\frac{\mathfrak{\check{F}}^t_{\mathfrak{a},\mathfrak{m},J}(M)}{\mathfrak{a} \mathfrak{\check{F}}^t_{\mathfrak{a},\mathfrak{m},J}(M)}\cong \frac{Im \alpha}{\mathfrak{a} Im \alpha}$. Therefore, since $Im \beta$ is Artinian, $ {\rm Tor}_1^R(R/\mathfrak{a},Img \beta)$ is Artinian. By the exact sequence (3) the proof is completed.
\end{proof}

\begin{cor}\label{cor4.12}With the same hypotheses of the previous result,
if $t= {\rm g\check{g}-{\rm depth}}(\mathfrak{a},\mathfrak{m},J,M)$, then $\check{\mathfrak{F}}^t_{\mathfrak{a}, \mathfrak{m}, J}(M)/ \mathfrak{a}\check{\mathfrak{F}}^t_{\mathfrak{a}, \mathfrak{m}, J}(M)$ is Artinian.

\end{cor}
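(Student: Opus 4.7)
The plan is essentially a direct unpacking of the definition, combined with a citation of the previous theorem. By Definition \ref{definition}, the quantity $\mathrm{g\check{g}}$-$\mathrm{depth}(\mathfrak{a},\mathfrak{m},J,M)$ is the supremum of integers $i$ such that $\check{\mathfrak{F}}^i_{\mathfrak{a},\mathfrak{m},J}(M)$ fails to be Artinian. Setting $t$ equal to this supremum therefore guarantees that for every $i>t$ the module $\check{\mathfrak{F}}^i_{\mathfrak{a},\mathfrak{m},J}(M)$ is Artinian, which is exactly the hypothesis required to invoke Theorem \ref{artquoc}.

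Since the corollary refers to "the same hypotheses of the previous result," we may assume that $M$ is a finitely generated $R$-module and that $\Gamma_{\mathfrak{a}}(M)$ is $J$-torsion, so the application of Theorem \ref{artquoc} is legitimate with no further verification. Applying that theorem at this particular value of $t$ immediately yields that $\check{\mathfrak{F}}^t_{\mathfrak{a},\mathfrak{m},J}(M)/\mathfrak{a}\check{\mathfrak{F}}^t_{\mathfrak{a},\mathfrak{m},J}(M)$ is Artinian, as claimed.

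There is no substantive obstacle; the corollary is a clean translation of Theorem \ref{artquoc} into the language of the depth invariant introduced in Section 2. The only minor point worth flagging is the tacit assumption that $t$ is a genuine non-negative integer: if $\check{\mathfrak{F}}^i_{\mathfrak{a},\mathfrak{m},J}(M)$ were Artinian for every $i\ge 0$, the supremum would be vacuous and there would be nothing to prove; otherwise the argument proceeds exactly as outlined above.
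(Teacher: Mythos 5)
Your proposal is correct and is precisely the intended argument: the paper states the corollary without proof because it follows by unwinding Definition \ref{definition} (so that $i>t$ forces $\check{\mathfrak{F}}^i_{\mathfrak{a},\mathfrak{m},J}(M)$ Artinian) and then citing Theorem \ref{artquoc}. Your remark about the degenerate case (the supremum being $-\infty$ when every $\check{\mathfrak{F}}^i$ is Artinian) is a reasonable and correct caveat, and note that the supremum is always finite on the other side since $\check{\mathfrak{F}}^i_{\mathfrak{a},\mathfrak{m},J}(M)$ vanishes for $i$ large.
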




\begin{cor}\label{cor4.13}\label{2} Consider $\mathfrak{a}$ and $J$  ideals of $(R,\mathfrak{m})$. Let $M$ be a finitely generated $R$-module such that $\Gamma_{\mathfrak{a}}(M)$ is a $J$-torsion.

\begin{enumerate}
\item [(1)] With the same idea of the proof of Theorem \ref{artquoc} is possible to show that $\check{\mathfrak{F}}^i_{\mathfrak{a}, \mathfrak{m}, J}(M)$ is Artinian for all $i<t$, then $\check{\mathfrak{F}}^t_{\mathfrak{a}, \mathfrak{m}, J}(M)/ \mathfrak{a}\check{\mathfrak{F}}^t_{\mathfrak{a}, \mathfrak{m}, J}(M)$ is Artinian for some  $t$ integer.

\item [(2)] If $t= {\rm f\check{f}-depth}(\mathfrak{a},\mathfrak{m},J,M)$, then $\check{\mathfrak{F}}^t_{\mathfrak{a}, \mathfrak{m}, J}(M)/ \mathfrak{a}\check{\mathfrak{F}}^t_{\mathfrak{a}, \mathfrak{m}, J}(M)$ is Artinian by previous item.

\end{enumerate}
\end{cor}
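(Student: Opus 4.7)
The plan is to mirror the argument of Theorem~\ref{artquoc} with the roles of the indices above and below $t$ interchanged. For part (1), I proceed by induction on $n=\dim M$. The base case $n=0$ is immediate: by \cite[Proposition 4.1]{paper}, $\check{\mathfrak{F}}^i_{\mathfrak{a},\mathfrak{m},J}(M)=0$ for $i>0$, while $\check{\mathfrak{F}}^0_{\mathfrak{a},\mathfrak{m},J}(M)\cong \mathfrak{F}^0_{\mathfrak{a},\mathfrak{m},J}(M)$ is Artinian by Theorem~\ref{top}, so the quotient in the statement is Artinian for every $t$. For the inductive step I first reproduce the reduction step of Theorem~\ref{artquoc} verbatim: using the exact sequence $0\to\Gamma_{\mathfrak{a}}(M)\to M\to M/\Gamma_{\mathfrak{a}}(M)\to 0$ together with the identification $\check{\mathfrak{F}}^i_{\mathfrak{a},\mathfrak{m},J}(\Gamma_{\mathfrak{a}}(M))\cong H^i_{\mathfrak{m}}(\Gamma_{\mathfrak{a}}(M))$ (Artinian in every degree), the associated long exact sequence reduces the problem to the case $\Gamma_{\mathfrak{a}}(M)=0$.

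Next I pick $x\in\mathfrak{a}\setminus\bigcup_{\mathfrak{p}\in{\rm Ass}_R M}\mathfrak{p}$, which is $M$-regular and satisfies $\dim M/xM=n-1$, and consider the long exact sequence induced by $0\to M\xrightarrow{x}M\to M/xM\to 0$:
$$\cdots\to \check{\mathfrak{F}}^{i-1}_{\mathfrak{a},\mathfrak{m},J}(M/xM)\xrightarrow{\beta_{i-1}}\check{\mathfrak{F}}^{i}_{\mathfrak{a},\mathfrak{m},J}(M)\xrightarrow{x} \check{\mathfrak{F}}^{i}_{\mathfrak{a},\mathfrak{m},J}(M)\xrightarrow{\alpha_i}\check{\mathfrak{F}}^{i}_{\mathfrak{a},\mathfrak{m},J}(M/xM)\to\cdots.$$
Whenever $i+1<t$ both $\check{\mathfrak{F}}^{i}_{\mathfrak{a},\mathfrak{m},J}(M)$ and $\check{\mathfrak{F}}^{i+1}_{\mathfrak{a},\mathfrak{m},J}(M)$ are Artinian by hypothesis, so the sandwich forces $\check{\mathfrak{F}}^{i}_{\mathfrak{a},\mathfrak{m},J}(M/xM)$ to be Artinian for all $i<t-1$. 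Applying the inductive hypothesis to $M/xM$ (of dimension $n-1$) at threshold $t-1$ then gives Artinianness of $\check{\mathfrak{F}}^{t-1}_{\mathfrak{a},\mathfrak{m},J}(M/xM)/\mathfrak{a}\check{\mathfrak{F}}^{t-1}_{\mathfrak{a},\mathfrak{m},J}(M/xM)$.

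The decisive step is transferring this conclusion back to $\check{\mathfrak{F}}^{t}_{\mathfrak{a},\mathfrak{m},J}(M)/\mathfrak{a}\check{\mathfrak{F}}^{t}_{\mathfrak{a},\mathfrak{m},J}(M)$, and this is the symmetric counterpart of the final diagram chase of Theorem~\ref{artquoc}. Splitting the LES at degrees $t-1$ and $t$ yields the short exact sequence
$$0\to {\rm Im}(\alpha_{t-1})\to \check{\mathfrak{F}}^{t-1}_{\mathfrak{a},\mathfrak{m},J}(M/xM)\to {\rm Im}(\beta_{t-1})\to 0,$$
in which ${\rm Im}(\alpha_{t-1})$ is a quotient of the Artinian module $\check{\mathfrak{F}}^{t-1}_{\mathfrak{a},\mathfrak{m},J}(M)$, and ${\rm Im}(\beta_{t-1})=\check{\mathfrak{F}}^{t}_{\mathfrak{a},\mathfrak{m},J}(M)[x]$ is the $x$-torsion submodule. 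Tensoring with $R/\mathfrak{a}$ and using that ${\rm Tor}_1^R(R/\mathfrak{a},-)$ preserves Artinianness, the $\mathfrak{a}$-quotient of ${\rm Im}(\beta_{t-1})$ is Artinian. The remaining chase passes from this Artinianness of the $x$-torsion piece to that of $\check{\mathfrak{F}}^{t}_{\mathfrak{a},\mathfrak{m},J}(M)/\mathfrak{a}\check{\mathfrak{F}}^{t}_{\mathfrak{a},\mathfrak{m},J}(M)$, using that multiplication by $x\in\mathfrak{a}$ acts as $0$ after $-\otimes_R R/\mathfrak{a}$; structurally it is the mirror image, at the boundary index $t-1$, of the chase in Theorem~\ref{artquoc}. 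The main obstacle is exactly this index-swapped chase: one must carefully recognize that ${\rm Im}(\beta_{t-1})=\check{\mathfrak{F}}^{t}_{\mathfrak{a},\mathfrak{m},J}(M)[x]$ plays the role that ${\rm Im}(\beta_t)\subseteq\check{\mathfrak{F}}^{t+1}_{\mathfrak{a},\mathfrak{m},J}(M)$ played in the original proof, where there the Artinianness came automatically from the hypothesis at $t+1$, whereas here it must be extracted indirectly from the hypothesis at $t-1$.

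Part (2) is then immediate from (1): assuming $t={\rm f\check{f}\!-\!depth}(\mathfrak{a},\mathfrak{m},J,M)$ forces $\check{\mathfrak{F}}^{i}_{\mathfrak{a},\mathfrak{m},J}(M)$ to be Artinian for all $i<t$ by the very definition of ${\rm f\check{f}\!-\!depth}$, and (1) then yields the Artinianness of $\check{\mathfrak{F}}^{t}_{\mathfrak{a},\mathfrak{m},J}(M)/\mathfrak{a}\check{\mathfrak{F}}^{t}_{\mathfrak{a},\mathfrak{m},J}(M)$.
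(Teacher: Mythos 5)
The reduction steps and the dimension induction are set up correctly, and your analysis is right up to the point where you obtain Artinianness of ${\rm Im}(\beta_{t-1})\otimes_R R/\mathfrak{a}$, where ${\rm Im}(\beta_{t-1})=(0:_{\check{\mathfrak{F}}^t_{\mathfrak{a},\mathfrak{m},J}(M)}x)$. The gap is the very last step, which you describe as a ``remaining chase'' and a ``mirror image'' of the chase in Theorem~\ref{artquoc} but do not actually carry out. Let me make precise why the symmetry you are invoking breaks down there.

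In the proof of Theorem~\ref{artquoc} the short exact sequence at degree $t$ produces ${\rm Im}\,\alpha_t \cong \check{\mathfrak{F}}^t(M)/x\check{\mathfrak{F}}^t(M)$, and the decisive identity is
$$\frac{\check{\mathfrak{F}}^t(M)/x\check{\mathfrak{F}}^t(M)}{\mathfrak{a}\bigl(\check{\mathfrak{F}}^t(M)/x\check{\mathfrak{F}}^t(M)\bigr)}\;\cong\;\frac{\check{\mathfrak{F}}^t(M)}{\mathfrak{a}\check{\mathfrak{F}}^t(M)},$$
which holds precisely because $x\in\mathfrak{a}$. Artinianness of the left-hand side therefore immediately gives the statement. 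In your mirrored situation, the module you control is instead $(0:_{N}x)/\mathfrak{a}(0:_{N}x)$ with $N=\check{\mathfrak{F}}^t(M)$, and there is no analogous identity relating this to $N/\mathfrak{a}N$. The observation that $x$ kills $N\otimes_R R/\mathfrak{a}$ does not produce one: tensoring $0\to (0:_N x)\to N\xrightarrow{x} xN\to 0$ with $R/\mathfrak{a}$ only shows that $N/\mathfrak{a}N$ is an extension of $xN/\mathfrak{a}xN$ by a quotient of $(0:_N x)/\mathfrak{a}(0:_N x)$, and the term $xN/\mathfrak{a}xN$ is again a quotient of $N/\mathfrak{a}N$ of the same kind, so this goes in a circle. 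In fact, for an arbitrary module $N$ with $x\in\mathfrak{a}$, Artinianness of $(0:_N x)/\mathfrak{a}(0:_N x)$ says nothing at all about $N/\mathfrak{a}N$: if $x$ is $N$-regular then $(0:_N x)=0$ while $N/\mathfrak{a}N$ can be arbitrary. The $x$-torsion piece and the $x$-cotorsion piece are genuinely on opposite sides of the boundary exact sequence, and that asymmetry is exactly what the proof of Theorem~\ref{artquoc} exploits; it does not reverse.

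For comparison, the same boundary appears in the proof of Theorem~6.5 of the paper, but there the functor applied is ${\rm Hom}_R(R/\mathfrak{m},-)$, and the relevant adjunction
$${\rm Hom}_R(R/\mathfrak{m},(0:_{N}x))\cong{\rm Hom}_R(R/\mathfrak{m}\otimes_R R/xR,N)\cong{\rm Hom}_R(R/\mathfrak{m},N)$$
(using $x\in\mathfrak{m}$) is what lets one pass from the $x$-torsion submodule back to $N$ itself. There is no analogous ${\rm Tor}$-adjunction identifying $(0:_{N}x)\otimes_R R/\mathfrak{a}$ with $N\otimes_R R/\mathfrak{a}$, so the argument does not port to the tensor-quotient statement. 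You would need either additional structural input on $\check{\mathfrak{F}}^t_{\mathfrak{a},\mathfrak{m},J}(M)$ (for instance something like $\mathfrak{a}$-adic separatedness or completeness, which the paper establishes only for $\mathfrak{F}^i$ in Proposition~\ref{inter}), or a different decomposition than the naive mirror of Theorem~\ref{artquoc}.

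Part (2) is fine once part (1) is in place.
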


\section{Coassociated and Cosupport Results}
\hspace{0.5cm}The concept of support and associated primes ideals of local cohomology were studied in great details. With respect to theory dual of this concept, not so much is known for formal local cohomology modules.  The objective of this section is to investigate and give some results about the cosupport and coassociated primes for formal local cohomology modules with respect to a pair of ideals. Firstly  an technical result that improves \cite[Proposition 2.1]{131}.

\textit{}\begin{prop}\label{inter}Let $M$ be an finitely generated $R$-module. Then for all $i\in \mathbb{Z}$   $$\cap_{t>0} \mathfrak{a}^t \mathfrak{F}^i_{\mathfrak{a},\mathfrak{m},J}(M)=0.$$
\end{prop}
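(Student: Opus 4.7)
The plan is to exploit the interaction between the $\mathfrak{a}$-adic filtration on $\mathfrak{F}^i_{\mathfrak{a},\mathfrak{m},J}(M)$ and the annihilators of the modules $H^i_{\mathfrak{m},J}(M/\mathfrak{a}^n M)$ that assemble to form it. The structural reason the intersection vanishes is that, while $\mathfrak{a}$ need not act nilpotently on the whole formal module, it \emph{does} act nilpotently on each stage of the defining inverse system, and ``being in $\mathfrak{a}^t$'' forces the image in low-level stages to vanish.

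First I would establish the annihilator statement: for every $n$, the ideal $\mathfrak{a}^n$ annihilates $H^i_{\mathfrak{m},J}(M/\mathfrak{a}^n M)$. This I would do by writing
\[
H^i_{\mathfrak{m},J}(M/\mathfrak{a}^n M)\cong \varinjlim_{\mathfrak{b}} \operatorname{Ext}^i_R(R/\mathfrak{b}, M/\mathfrak{a}^n M),
\]
the colimit being taken over the set of ideals $\mathfrak{b}$ with $\mathfrak{m}^k\subseteq\mathfrak{b}+J$ for some $k$ (this is the standard description of local cohomology with respect to a pair of ideals from \cite{art1}). Since $\mathfrak{a}^n$ annihilates $M/\mathfrak{a}^n M$, Lemma \ref{lemma0} shows $\mathfrak{a}^n$ annihilates every $\operatorname{Ext}^i_R(R/\mathfrak{b}, M/\mathfrak{a}^n M)$, hence also the direct limit.

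Next I would chase through the inverse limit. Denote by $\pi_n\colon\mathfrak{F}^i_{\mathfrak{a},\mathfrak{m},J}(M)\to H^i_{\mathfrak{m},J}(M/\mathfrak{a}^n M)$ the canonical projection. These maps are $R$-linear, so if $y\in \mathfrak{a}^t\mathfrak{F}^i_{\mathfrak{a},\mathfrak{m},J}(M)$ is written as a finite sum $y=\sum_j a_j z^{(j)}$ with $a_j\in\mathfrak{a}^t$, then $\pi_n(y)=\sum_j a_j\pi_n(z^{(j)})$. Whenever $n\leq t$ we have $a_j\in\mathfrak{a}^t\subseteq\mathfrak{a}^n$, and by the first step $\mathfrak{a}^n$ kills $H^i_{\mathfrak{m},J}(M/\mathfrak{a}^n M)$, so $\pi_n(y)=0$.

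Finally, if $y\in\bigcap_{t>0}\mathfrak{a}^t\mathfrak{F}^i_{\mathfrak{a},\mathfrak{m},J}(M)$, then for each fixed $n$ we may take $t=n$ in the preceding paragraph to conclude $\pi_n(y)=0$. Since $y$ is an element of the projective limit and projects to zero in every component, $y=0$, as required. The only point that requires a bit of care, and hence is the ``main obstacle,'' is the first step: one must use the definition of $H^i_{\mathfrak{m},J}$ for a pair of ideals (not just $H^i_{\mathfrak{m}}$) to extract the annihilator information; once that is in hand, the rest is a direct compatibility argument in the inverse system.
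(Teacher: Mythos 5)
Your proof is correct and rests on the same key observation as the paper's: $\mathfrak{a}^n$ annihilates $M/\mathfrak{a}^n M$, hence (via the direct-limit description of $H^i_{\mathfrak{m},J}$ and Lemma~\ref{lemma0}) annihilates $H^i_{\mathfrak{m},J}(M/\mathfrak{a}^n M)$, so every element of $\bigcap_t \mathfrak{a}^t\mathfrak{F}^i_{\mathfrak{a},\mathfrak{m},J}(M)$ projects to zero at each finite stage. The paper packages this by pushing $\mathfrak{a}^t$ inside the inverse limit and interchanging $\varprojlim_t$ with $\varprojlim_n$, whereas your element-wise chase through the projections $\pi_n$ is cleaner and avoids the limit-interchange bookkeeping; the mathematical content is the same.
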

\begin{proof} Note that $\mathfrak{a}\displaystyle \varprojlim_{t }N_t\subseteq \displaystyle \varprojlim_{t }\mathfrak{a}N_t$ if $\{N_t \}$ is a inverse system. Therefore

$$
\begin{array}{lll}
\cap_{t>0} \mathfrak{a}^t \mathfrak{F}^i_{\mathfrak{a},\mathfrak{m},J}(M) &\cong & \displaystyle \varprojlim_{t }\mathfrak{a}^t\displaystyle \varprojlim_{ n }H^i_{I,J}(M / \mathfrak{a}^n M) \\
&\subseteq & \displaystyle \varprojlim_{ t }\displaystyle \varprojlim_{ n }\mathfrak{a}^t H^i_{\mathfrak{m},J}(M / \mathfrak{a}^n M)\\
&\cong& \displaystyle \varprojlim_{ n }\displaystyle \varprojlim_{ t }\mathfrak{a}^t H^i_{\mathfrak{m},J}(M / \mathfrak{a}^n M)

\end{array}
$$

Since $H^i_{\mathfrak{m},J}(M / \mathfrak{a}^n M)\cong \displaystyle \varinjlim_{\tilde{\mathfrak{a}}\in \widetilde{W}(\mathfrak{m},J)} H^i_{\tilde{\mathfrak{a}}}(M / \mathfrak{a}^n M)$ \cite[Theorem 3.2]{art1} follows that

$$
\begin{array}{lll}
\cap_{t>0} \mathfrak{a}^t \mathfrak{F}^i_{\mathfrak{a},\mathfrak{m},J}(M) &\cong & \displaystyle \varprojlim_{ n }\displaystyle \varprojlim_{t }\mathfrak{a}^t \displaystyle \varinjlim_{\tilde{\mathfrak{a}}\in \widetilde{W}(\mathfrak{m},J)}  H^i_{\tilde{\mathfrak{a}}}(M / \mathfrak{a}^n M)  \\
&\subseteq & \displaystyle \varprojlim_{n }\displaystyle \varprojlim_{ t }\sum_{\tilde{\mathfrak{a}}\in \widetilde{W}(\mathfrak{m},J)} \mathfrak{a}^t  H^i_{\tilde{\mathfrak{a}}}(M / \mathfrak{a}^n M).

\end{array}
$$

Thus, as for all $t\geq n$  $\mathfrak{a}^t  H^i_{\mathfrak{a}}(M / \mathfrak{a}^n M)=0$ by Lemma \ref{lemma0} the proof is completed.
\end{proof}

\begin{lem}\label{cosup} Let $\mathfrak{a}$ and $J$ be two ideals of  $(R,\mathfrak{m})$, $M$  an $R$-module, not necessarily finitely generated, and $S$ be a multiplicative set of $R$ such that $S\cap \mathfrak{a}\neq \emptyset$. Then ${\rm Hom}_R(S^{-1}R, \mathfrak{F}^i_{\mathfrak{a},\mathfrak{m},J}(M))=0$ for all  integer $i$.
\end{lem}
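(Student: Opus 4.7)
The plan is to reduce the statement to the $\mathfrak{a}$-separation property $\bigcap_{t>0}\mathfrak{a}^{t}\mathfrak{F}^{i}_{\mathfrak{a},\mathfrak{m},J}(M)=0$. Since $S\cap\mathfrak{a}\neq\emptyset$, fix once and for all an element $x\in S\cap\mathfrak{a}$. Given any $f\in\mathrm{Hom}_{R}(S^{-1}R,\mathfrak{F}^{i}_{\mathfrak{a},\mathfrak{m},J}(M))$ and any fraction $a/s\in S^{-1}R$, for each $t\geq 1$ we may rewrite $a/s=x^{t}\cdot(a/(sx^{t}))$, so that
\[
f(a/s)=x^{t}\,f\!\left(a/(sx^{t})\right)\in x^{t}\mathfrak{F}^{i}_{\mathfrak{a},\mathfrak{m},J}(M)\subseteq\mathfrak{a}^{t}\mathfrak{F}^{i}_{\mathfrak{a},\mathfrak{m},J}(M).
\]
Letting $t$ vary, $f(a/s)$ lies in the intersection $\bigcap_{t>0}\mathfrak{a}^{t}\mathfrak{F}^{i}_{\mathfrak{a},\mathfrak{m},J}(M)$. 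Once this intersection is shown to vanish, we obtain $f(a/s)=0$ for every $a/s\in S^{-1}R$, hence $f=0$.

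To justify the intersection statement for an arbitrary (not necessarily finitely generated) $M$, I would essentially reuse the argument of Proposition \ref{inter}. The only property of $M$ used there is that $\mathfrak{a}^{n}$ annihilates $M/\mathfrak{a}^{n}M$, which is tautological; by functoriality of $H^{i}_{\mathfrak{m},J}(-)$ (combined with Lemma \ref{lemma0}), this yields $\mathfrak{a}^{t}H^{i}_{\mathfrak{m},J}(M/\mathfrak{a}^{n}M)=0$ whenever $t\geq n$, with no appeal to finite generation. Given $\xi=(\xi_{n})\in\bigcap_{t>0}\mathfrak{a}^{t}\mathfrak{F}^{i}_{\mathfrak{a},\mathfrak{m},J}(M)$ and using that each $\mathfrak{a}^{t}$ is finitely generated ($R$ being Noetherian), every component $\xi_{n}$ belongs to $\mathfrak{a}^{t}H^{i}_{\mathfrak{m},J}(M/\mathfrak{a}^{n}M)$ for every $t$; choosing $t\geq n$ forces $\xi_{n}=0$, hence $\xi=0$.

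The main obstacle, which is minor, is confirming that Proposition \ref{inter} survives removal of the finite generation hypothesis; as explained above, this is automatic once one observes that only the annihilator relation $\mathfrak{a}^{n}(M/\mathfrak{a}^{n}M)=0$ enters. With this in hand, the one-line telescoping trick $a/s=x^{t}\cdot(a/(sx^{t}))$ finishes the proof.
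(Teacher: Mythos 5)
Your argument is exactly the one in the paper: pick $x\in S\cap\mathfrak{a}$, write $f(a/s)=x^{t}f(a/(sx^{t}))\in\mathfrak{a}^{t}\mathfrak{F}^{i}_{\mathfrak{a},\mathfrak{m},J}(M)$ for every $t$, and then invoke Proposition~\ref{inter} to conclude that the intersection vanishes and hence $f=0$. Your extra remark is well taken, though: the paper's Proposition~\ref{inter} is stated for finitely generated $M$ while Lemma~\ref{cosup} explicitly drops that hypothesis, and the paper cites the proposition without comment; your observation that the proof of Proposition~\ref{inter} only uses the tautology $\mathfrak{a}^{n}(M/\mathfrak{a}^{n}M)=0$ together with Lemma~\ref{lemma0} (so that finite generation of $M$ is never needed) fills that small gap.
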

\begin{proof} Let $s_1$ an element of $S\cap \mathfrak{a}\neq \emptyset$. If $f \in {\rm Hom}_R(S^{-1}R, \mathfrak{F}^i_{\mathfrak{a},\mathfrak{m},J}(M))$ then $f(r/s)= s_1^jf(r/s_1^j s) \in \mathfrak{a}^j\mathfrak{F}^i_{\mathfrak{a},\mathfrak{m},J}(M))$ for all $r/s\in S^{-1}R$ and $j>0$. Then $f(r/s)\in \cap _{j>0} \mathfrak{a}^j\mathfrak{F}^i_{\mathfrak{a},\mathfrak{m},J}(M))=0$ by Proposition \ref{inter}. Therefore $f=0$ and this proof the statement.

\end{proof}
The previous lemma generalizes \cite[Lemma 2.2]{131} and will be useful to characterize the cosupport and coassociated primes of certain formal local cohomologies.  In \cite{melschen}, Melkerson and Schenzel defined the concept the cosupport for an $R$-module $M$ give by
$${\rm CoSupp}(M)=\{ \mathfrak{p}\in {\rm Spec}R \  ; \  {\rm Hom}_R(R_{\mathfrak{p}},M)\neq 0  \}.$$

Our next result improves \cite[Corollary 2.3]{131} and \cite[Corollary 3.5]{majidp}.

\begin{prop}\label{contido} Let $\mathfrak{a}$ and $J$ be two ideals of  $(R,\mathfrak{m})$. Let $M$ be  a finitely generated $R$-module . Then ${\rm CoSupp}(\mathfrak{F}^i_{\mathfrak{a},\mathfrak{m},J}(M))\subset V(\mathfrak a)$ for all integer $i$.
\end{prop}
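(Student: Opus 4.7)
The plan is to deduce the containment directly from Lemma \ref{cosup} by contrapositive. Suppose $\mathfrak{p} \in \operatorname{Spec}(R)$ satisfies $\mathfrak{p} \notin V(\mathfrak{a})$, i.e.\ $\mathfrak{a} \not\subseteq \mathfrak{p}$. First I would pick an element $s \in \mathfrak{a} \setminus \mathfrak{p}$, and then set $S = R \setminus \mathfrak{p}$, so that $S$ is a multiplicative subset of $R$ with $S \cap \mathfrak{a} \neq \emptyset$ (because $s$ lies in both).

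Next, I would apply Lemma \ref{cosup} to the multiplicative set $S$, which yields
\[
\operatorname{Hom}_R\bigl(S^{-1}R,\ \mathfrak{F}^i_{\mathfrak{a},\mathfrak{m},J}(M)\bigr) = 0
\]
for every integer $i$. Since $S^{-1}R = R_{\mathfrak{p}}$ by definition of localization at a prime, this reads
\[
\operatorname{Hom}_R\bigl(R_{\mathfrak{p}},\ \mathfrak{F}^i_{\mathfrak{a},\mathfrak{m},J}(M)\bigr) = 0,
\]
so $\mathfrak{p} \notin \operatorname{CoSupp}(\mathfrak{F}^i_{\mathfrak{a},\mathfrak{m},J}(M))$. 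Taking the contrapositive gives the desired inclusion $\operatorname{CoSupp}(\mathfrak{F}^i_{\mathfrak{a},\mathfrak{m},J}(M)) \subseteq V(\mathfrak{a})$.

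There is essentially no obstacle here: the entire content was absorbed into Proposition \ref{inter} (the ideal-adic separation statement $\bigcap_{t>0}\mathfrak{a}^t \mathfrak{F}^i_{\mathfrak{a},\mathfrak{m},J}(M) = 0$) and its packaging as Lemma \ref{cosup}. The only thing to be careful about is verifying that $M$ finitely generated is enough (so that Lemma \ref{cosup} applies, even though its hypothesis allows non-finitely generated modules), and that the multiplicative set one chooses is indeed the complement of $\mathfrak{p}$. No separate case analysis on $i$ is needed, since Lemma \ref{cosup} is valid for all integers $i$.
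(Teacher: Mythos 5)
Your proof is correct and is essentially identical to the paper's argument: both reduce to Lemma \ref{cosup} applied to $S = R\setminus\mathfrak{p}$, with the only cosmetic difference being that you phrase it as a contrapositive while the paper argues directly from $\mathfrak{p}\in\operatorname{CoSupp}(\mathfrak{F}^i_{\mathfrak{a},\mathfrak{m},J}(M))$ to conclude $\mathfrak{a}\cap(R\setminus\mathfrak{p})=\emptyset$.
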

\begin{proof} Let $\mathfrak{p}\in {\rm CoSupp}(\mathfrak{F}^i_{\mathfrak{a},\mathfrak{m},J}(M))$. Then ${\rm Hom}_R(R_{\mathfrak{p}},\mathfrak{F}^i_{\mathfrak{a},\mathfrak{m},J}(M))\neq 0$ and so, by Lemma \ref{cosup} follow that $\mathfrak{a}\cap (R/\mathfrak{p})=\emptyset$. Therefore $\mathfrak{a}\subset \mathfrak{p}$ and so the claim is proved.

\end{proof}

 A prime ideal $\mathfrak{p}\in R$ is called to be a coassociated prime ideal of  an $R$-module $M$ if there exist a cocyclic homomorphic image $L$ of $M$ such that $\mathfrak{p}= {\rm Ann}(L)$. The set of all coassociated prime ideals of  $M$ is denoted by ${\rm Coass}_R (M)$ (see  \cite{yassemi}). In other words, when we consider an local ring $(R,\mathfrak{m})$, ${\rm Coass}_R (M)= {\rm Ass}(M^{\vee}) $ where $M^{\vee}= {\rm Hom }(M, E(R/\mathfrak{m})).$ Furthermore, if $\mathfrak{a}$ be an ideal of $R$, the following statement is true:

(i)   ${\rm Coass}_R (M/\mathfrak{a}M)= {\rm Coass}_R (M) \cap V(\mathfrak{a})$ \cite[Theorem 1.21]{yassemi};

(ii) If $M$ is Artinian then ${\rm Coass}_R (M)$ is finite \cite[Lemma 1.22]{yassemi};

(iii) $ {\rm Coass}_R M \subset {\rm CoSupp}_R(M)$ \cite[Therem 2.2]{yassemi}.

\begin{thm} Assume $\mathfrak{a}$ be an ideal of $(R, \mathfrak{m})$. Let $M$ be a finitely generated $R$-module with $\rm{dim} M=d $. Then
$${\rm Coass}_R(\mathfrak{F}^d_{\mathfrak{a},I,J}(M))= \{\mathfrak{p}\in {\rm Supp}_R M\cap V(J) \mid {\rm cd}(I,R/\mathfrak{p})=d\}\cap V(\mathfrak{a}),$$
where ${\rm cd}(I,R/\mathfrak{p})$ is the cohomological dimension of the $R$-module $R/\mathfrak{p}$ with respect to $I$. In particular,
$${\rm Coass}_R(\mathfrak{F}^d_{\mathfrak{a},\mathfrak{m},J}(M))= \{\mathfrak{p}\in {\rm Supp}_R M\cap V(J) \mid \dim R/{\mathfrak p}=d\}\cap V(\mathfrak{a}).$$

\end{thm}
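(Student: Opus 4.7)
The plan is to deduce the statement directly from Theorem \ref{top} (and Theorem \ref{top1} for the ``in particular'' assertion), by invoking the classical identification ${\rm Coass}_R(A) = {\rm Att}_R(A)$ which holds for every Artinian $R$-module $A$. Since the right-hand side of the displayed formula is exactly the set computed in Theorem \ref{top}, once I have reduced Coass to Att on the module $\mathfrak{F}^d_{\mathfrak{a},I,J}(M)$ the proof will be over.

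First, I would recall that Theorem \ref{top} already supplies the two ingredients I need: the Artinianness of $\mathfrak{F}^d_{\mathfrak{a},I,J}(M)$ (via the explicit presentation $\mathfrak{F}^d_{\mathfrak{a},I,J}(M) \cong H^d_{I,J}(M)/\mathfrak{a}^{n_0}H^d_{I,J}(M)$ produced in its proof) and the exact description of its attached primes as $\{\mathfrak{p}\in{\rm Supp}_RM\cap V(J)\mid {\rm cd}(I,R/\mathfrak{p})=d\}\cap V(\mathfrak{a})$. Next I would invoke the fact that coassociated primes and attached primes coincide for Artinian modules over any commutative Noetherian ring; this follows from the existence of a minimal secondary representation for an Artinian module, or equivalently (in the local case) from the definition ${\rm Coass}_R(A) = {\rm Ass}_R(A^{\vee})$ combined with Matlis duality interchanging secondary summands of $A$ and primary components of $A^{\vee}$. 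Substituting $A = \mathfrak{F}^d_{\mathfrak{a},I,J}(M)$ then gives the first equality of the theorem at once.

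For the ``in particular'' statement, the same two-step reduction applies verbatim, now with Theorem \ref{top1} playing the role of Theorem \ref{top}; alternatively one can derive it from the general formula by noting that ${\rm cd}(\mathfrak{m},R/\mathfrak{p}) = \dim R/\mathfrak{p}$ by Grothendieck's vanishing and non-vanishing theorems, so the two descriptions collapse to the same set. The only real obstacle is the identification ${\rm Coass} = {\rm Att}$ on Artinian modules; this should be mentioned explicitly (say citing \cite{yassemi}) because the paper uses both notions. A more self-contained alternative I could pursue, if one prefers to avoid that citation, is to apply property (i) of coassociated primes (${\rm Coass}_R(N/\mathfrak{a}N) = {\rm Coass}_R(N) \cap V(\mathfrak{a})$) to the isomorphism $\mathfrak{F}^d_{\mathfrak{a},I,J}(M) \cong H^d_{I,J}(M)/\mathfrak{a}^{n_0}H^d_{I,J}(M)$, reducing the question to the coassociated primes of the Artinian module $H^d_{I,J}(M)$, which can be read off from \cite{artop}.
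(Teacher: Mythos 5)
Your proposal is correct and follows essentially the same route as the paper's own (one-line) proof: both reduce coassociated primes to attached primes via Artinianness of the module, then read off the answer from Theorem \ref{top} (resp.\ Theorem \ref{top1} for $I=\mathfrak{m}$), with the Coass--Att identification for Artinian modules supplied by a result of Yassemi (the paper cites \cite[Theorem 1.14]{yassemi}). The alternative self-contained route you sketch, passing through ${\rm Coass}_R(N/\mathfrak{a}N)={\rm Coass}_R(N)\cap V(\mathfrak{a})$ applied to the presentation $\mathfrak{F}^d_{\mathfrak{a},I,J}(M)\cong H^d_{I,J}(M)/\mathfrak{a}^{n_0}H^d_{I,J}(M)$, is a nice extra but not what the paper does.
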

\begin{proof}The statement follow immediately by Theorem \ref{top}, Theorem \ref{top1} and \cite[Theorem 1.14]{yassemi}.
\end{proof}
Analogously to Corollary \ref{mesmosuporte}, by previous theorem we have the following result.
\begin{cor}Consider $\mathfrak{a}, I$ and $J$ be ideals of  $(R, \mathfrak{m})$. Let $M$ and $N$ are two finitely generated $R$-modules of dimension $d$ such that ${\rm Supp}_R M= {\rm Supp}_R N$. Then ${\rm Coass}_R(\mathfrak{F}^d_{\mathfrak{a},I,J}(M))= {\rm Coass}_R(\mathfrak{F}^d_{\mathfrak{a},I,J}(N))$.

\end{cor}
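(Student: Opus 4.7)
The plan is to reduce the statement directly to the characterization of $\mathrm{Coass}_R(\mathfrak{F}^d_{\mathfrak{a},I,J}(M))$ just established in the preceding theorem. That theorem gives
\[
\mathrm{Coass}_R(\mathfrak{F}^d_{\mathfrak{a},I,J}(M)) = \{\mathfrak{p}\in \mathrm{Supp}_R M \cap V(J) \mid \mathrm{cd}(I,R/\mathfrak{p})=d\}\cap V(\mathfrak{a}),
\]
and the right-hand side depends on $M$ only through $\mathrm{Supp}_R M$ (the ideals $\mathfrak{a}, I, J$ and the cohomological dimensions $\mathrm{cd}(I,R/\mathfrak{p})$ are intrinsic to $R$ and $\mathfrak{p}$, not to $M$).

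First, I would remark that the hypothesis $\mathrm{Supp}_R M = \mathrm{Supp}_R N$ forces the common dimension $d$ of $M$ and $N$ to be the same, so the preceding theorem applies to both $M$ and $N$ with the same exponent $d$. Then I would simply write the two characterizations
\[
\mathrm{Coass}_R(\mathfrak{F}^d_{\mathfrak{a},I,J}(M)) = \{\mathfrak{p}\in \mathrm{Supp}_R M \cap V(J) \mid \mathrm{cd}(I,R/\mathfrak{p})=d\}\cap V(\mathfrak{a}),
\]
\[
\mathrm{Coass}_R(\mathfrak{F}^d_{\mathfrak{a},I,J}(N)) = \{\mathfrak{p}\in \mathrm{Supp}_R N \cap V(J) \mid \mathrm{cd}(I,R/\mathfrak{p})=d\}\cap V(\mathfrak{a}),
\]
and observe that the right-hand sides coincide under $\mathrm{Supp}_R M = \mathrm{Supp}_R N$.

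There is no real obstacle here: the corollary is a formal consequence of the support-only dependence of the set appearing in the coassociated-primes formula, exactly parallel to Corollary~\ref{mesmosuporte} for attached primes. The only thing worth being careful about is that the two modules are plugged into the formula with the same top index $d$, which is guaranteed by the equality of supports.
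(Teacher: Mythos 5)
Your proof is correct and is exactly the paper's argument: the paper cites the preceding theorem, whose formula for $\mathrm{Coass}_R(\mathfrak{F}^d_{\mathfrak{a},I,J}(M))$ depends on $M$ only through $\mathrm{Supp}_R M$, just as you observe. Your remark that equal supports force equal dimension is a sensible explicit check, though the paper treats it as implicit.
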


The next result generalizes \cite[Corollary 2.5]{finiteness}.
\begin{prop}\label{5.6} Let $\mathfrak{a}$ and $J$ be two ideals of $(R,\mathfrak{m})$ . Let $M$ be a finitely generated $R$-module such that $\Gamma_{\mathfrak{a}}(M)$ is $J$-torsion. If, for some integer $t$, $\check{\mathfrak{F}}^i_{\mathfrak{a}, \mathfrak{m}, J}(M)$ is Artinian for all $i>t$ (respectively $i<t$), then $ {\rm Coass}_R \check{\mathfrak{F}}^t_{\mathfrak{a}, \mathfrak{m}, J}(M) \cap V(\mathfrak{a}) $ is finite.
\end{prop}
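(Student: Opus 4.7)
The plan is to reduce the finiteness of $\mathrm{Coass}_R\check{\mathfrak{F}}^t_{\mathfrak{a},\mathfrak{m},J}(M)\cap V(\mathfrak{a})$ to the finiteness of the coassociated primes of the Artinian quotient $\check{\mathfrak{F}}^t_{\mathfrak{a},\mathfrak{m},J}(M)/\mathfrak{a}\check{\mathfrak{F}}^t_{\mathfrak{a},\mathfrak{m},J}(M)$, which is exactly the object delivered by Theorem \ref{artquoc} (or by Corollary \ref{cor4.13}(1) for the $i<t$ case). The three facts listed in the paper just before Theorem \ref{top1}, in particular (i) $\mathrm{Coass}_R(N/\mathfrak{a}N)=\mathrm{Coass}_R(N)\cap V(\mathfrak{a})$ and (ii) $\mathrm{Coass}_R$ of an Artinian module is finite, will then finish the argument essentially for free.

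More precisely, I would proceed as follows. First, since $\Gamma_{\mathfrak{a}}(M)$ is $J$-torsion and $\check{\mathfrak{F}}^i_{\mathfrak{a},\mathfrak{m},J}(M)$ is Artinian for all $i>t$, apply Theorem \ref{artquoc} to obtain that
\[
Q:=\check{\mathfrak{F}}^t_{\mathfrak{a},\mathfrak{m},J}(M)/\mathfrak{a}\check{\mathfrak{F}}^t_{\mathfrak{a},\mathfrak{m},J}(M)
\]
is an Artinian $R$-module. By fact (ii) above, $\mathrm{Coass}_R(Q)$ is a finite set. Next, apply fact (i) with $N=\check{\mathfrak{F}}^t_{\mathfrak{a},\mathfrak{m},J}(M)$ to get the identification
\[
\mathrm{Coass}_R(Q)=\mathrm{Coass}_R\!\bigl(\check{\mathfrak{F}}^t_{\mathfrak{a},\mathfrak{m},J}(M)\bigr)\cap V(\mathfrak{a}).
\]
Combining the two displays yields that $\mathrm{Coass}_R\check{\mathfrak{F}}^t_{\mathfrak{a},\mathfrak{m},J}(M)\cap V(\mathfrak{a})$ is finite, as desired.

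For the parenthetical statement corresponding to $i<t$, I would invoke Corollary \ref{cor4.13}(1) in place of Theorem \ref{artquoc}: under the hypothesis that $\check{\mathfrak{F}}^i_{\mathfrak{a},\mathfrak{m},J}(M)$ is Artinian for all $i<t$, the same quotient $Q$ is again Artinian, and the rest of the argument is identical. I do not expect any real obstacle here; the entire content is packaging the Artinianness of the quotient (which is the nontrivial input from Section 4) together with the standard behaviour of coassociated primes modulo an ideal, so the proof should be only a few lines long.
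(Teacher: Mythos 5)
Your proof is correct and matches the paper's own argument exactly: the paper also deduces the result by combining Theorem \ref{artquoc} (giving Artinianness of $\check{\mathfrak{F}}^t_{\mathfrak{a},\mathfrak{m},J}(M)/\mathfrak{a}\check{\mathfrak{F}}^t_{\mathfrak{a},\mathfrak{m},J}(M)$) with facts (i) and (ii) on coassociated primes stated just before Theorem \ref{top1}. No difference in approach.
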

\begin{proof}
By Theorem \ref{artquoc} and previous comment (i) and (ii)  the result follows.

\end{proof}

\begin{rem}If $t={\rm g\check{g}-depth}(\mathfrak{a},\mathfrak{m},J,M)$ (resp. $t= {\rm f\check{f}- depth}(\mathfrak{a},\mathfrak{m},J,M)),$ by Corollary \ref{cor4.12} (resp. Corollary  \ref{cor4.13})  follows that $ {\rm Coass}_R \check{\mathfrak{F}}^t_{\mathfrak{a}, \mathfrak{m}, J}(M) \cap V(\mathfrak{a}) $ is finite.

\end{rem}

Note  that, with the same hypothesis of  Proposition \ref{5.6}, by \cite[Proposition 2.4]{paper} and \cite[1.10]{yassemi}, we have that $ {\rm Coass}_R \mathfrak{F}^t_{\mathfrak{a}, \mathfrak{m}, J}(M) \cap V(\mathfrak{a}) $ is finite. Therefore, by previous comment (iii)  and Proposition \ref{contido} we have that $ {\rm Coass}_R  \mathfrak{F}^t_{\mathfrak{a}, \mathfrak{m}, J}(M)$ is finite.

When $M$ is $J$-torsion $R$-module, we have the similar result for formal local cohomology modules with respect to a pair of ideals.

\begin{cor} Let $\mathfrak{a}$ and $J$ be two ideals of $(R,\mathfrak{m}).$ Let $M$ be a finitely generated $R$-module $J$-torsion. If $t= {\rm sup}\{{\rm ara}(\tilde{\mathfrak{a}}) \ ; \ \tilde{\mathfrak{a}}\in \tilde{W}(\mathfrak{m},J) \},$ then $\mathfrak{F}^t_{\mathfrak{a}, \mathfrak{m}, J}(M)\otimes R/\mathfrak{a}$ is Artinian and $ {\rm Coass}_R \mathfrak{F}^t_{\mathfrak{a}, \mathfrak{m}, J}(M) $ is finite.
\end{cor}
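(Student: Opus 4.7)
The plan is to reduce the statement to Theorem \ref{artquoc} via a vanishing argument above degree $t$, and then to extract the coassociated-primes claim from the resulting Artinian quotient together with Proposition \ref{contido}.

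First I would establish that both $\check{\mathfrak{F}}^i_{\mathfrak{a},\mathfrak{m},J}(M)$ and $\mathfrak{F}^i_{\mathfrak{a},\mathfrak{m},J}(M)$ vanish for $i>t$. Since $\mathfrak{m}\in\widetilde{W}(\mathfrak{m},J)$ one has ${\rm ara}(\mathfrak{m})\le t$, so one can pick $\underline{x}=x_{1},\dots,x_{r}$ with $r={\rm ara}(\mathfrak{m})$ and $\sqrt{(\underline{x})R}=\mathfrak{m}$; the complex $\check{C}_{\underline{x},J}$ is then concentrated in degrees $0,\dots,r\le t$, which forces $\check{\mathfrak{F}}^i_{\mathfrak{a},\mathfrak{m},J}(M)=0$ for every $i>t$. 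For the ordinary version I would apply \cite[Theorem 3.2]{art1}:
$$H^i_{\mathfrak{m},J}(N)\cong\varinjlim_{\tilde{\mathfrak{a}}\in\widetilde{W}(\mathfrak{m},J)}H^i_{\tilde{\mathfrak{a}}}(N),$$
together with the fact that $H^i_{\tilde{\mathfrak{a}}}(N)=0$ whenever $i>{\rm ara}(\tilde{\mathfrak{a}})$, so by definition of $t$ the colimit vanishes for $i>t$ and any $N$. Specializing to $N=M/\mathfrak{a}^{n}M$ and passing to the projective limit gives $\mathfrak{F}^i_{\mathfrak{a},\mathfrak{m},J}(M)=0$ for $i>t$; in particular both versions are (trivially) Artinian in this range.

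Next, because $M$ is $J$-torsion the submodule $\Gamma_{\mathfrak{a}}(M)$ is also $J$-torsion, so the hypothesis of Theorem \ref{artquoc} is satisfied. Applied with the \v{C}ech vanishing from the previous step, it yields that $\check{\mathfrak{F}}^t_{\mathfrak{a},\mathfrak{m},J}(M)/\mathfrak{a}\check{\mathfrak{F}}^t_{\mathfrak{a},\mathfrak{m},J}(M)$ is Artinian. I would then transfer this Artinianness to the ordinary formal version using \cite[Proposition 2.4]{paper}, exactly as in the remark after Proposition \ref{5.6}, to conclude that
$$\mathfrak{F}^t_{\mathfrak{a},\mathfrak{m},J}(M)\otimes_{R}R/\mathfrak{a}\;\cong\;\mathfrak{F}^t_{\mathfrak{a},\mathfrak{m},J}(M)/\mathfrak{a}\mathfrak{F}^t_{\mathfrak{a},\mathfrak{m},J}(M)$$
is Artinian, which proves the first assertion.

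Finally, from the Artinianness of $\mathfrak{F}^t_{\mathfrak{a},\mathfrak{m},J}(M)\otimes R/\mathfrak{a}$ and comment (ii) of Section 5, its set of coassociated primes is finite; by comment (i) this set equals ${\rm Coass}_{R}(\mathfrak{F}^t_{\mathfrak{a},\mathfrak{m},J}(M))\cap V(\mathfrak{a})$, so that intersection is finite. Combining Proposition \ref{contido} with comment (iii) gives ${\rm Coass}_{R}(\mathfrak{F}^t_{\mathfrak{a},\mathfrak{m},J}(M))\subseteq{\rm CoSupp}_{R}(\mathfrak{F}^t_{\mathfrak{a},\mathfrak{m},J}(M))\subseteq V(\mathfrak{a})$, so the intersection with $V(\mathfrak{a})$ exhausts ${\rm Coass}_{R}(\mathfrak{F}^t_{\mathfrak{a},\mathfrak{m},J}(M))$ and the latter is finite. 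The main obstacle I foresee is the transfer step: passing from Artinianness of $\check{\mathfrak{F}}^t/\mathfrak{a}\check{\mathfrak{F}}^t$ to that of $\mathfrak{F}^t/\mathfrak{a}\mathfrak{F}^t$ relies on the comparison \cite[Proposition 2.4]{paper} from the companion paper rather than on material developed within the excerpt.
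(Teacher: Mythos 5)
Your outline matches the paper's high-level strategy (vanishing above degree $t$, Theorem \ref{artquoc}, then the coassociated-primes wrap-up), but it leaves unresolved precisely the step you flagged as a concern, and it misses the ingredient that makes the paper's proof close cleanly. Since $M$ is $J$-torsion, $\Gamma_{\mathfrak{a}}(M)$ is $J$-torsion, and the paper invokes \cite[Corollary 2.6]{paper} to get $\check{\mathfrak{F}}^i_{\mathfrak{a},\mathfrak{m},J}(M)\cong\mathfrak{F}^i_{\mathfrak{a},\mathfrak{m},J}(M)$ for all $i$. With that identification there is nothing to transfer: Theorem \ref{artquoc} applied to the \v{C}ech side directly yields the Artinianness of $\mathfrak{F}^t_{\mathfrak{a},\mathfrak{m},J}(M)\otimes R/\mathfrak{a}$. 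Your alternative — passing the Artinianness of the quotient through \cite[Proposition 2.4]{paper} — would only succeed if that proposition realizes $\mathfrak{F}^i$ as a quotient of $\check{\mathfrak{F}}^i$ (say via a $\varprojlim^1$ sequence); that is plausible but you have not verified it, and the text here only cites Proposition 2.4 of the companion paper for comparing depths and coassociated primes, not for delivering the Artinian quotient. As written, the first assertion of the corollary is not established.

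The remaining pieces are fine and essentially agree with the paper: the vanishing of $\mathfrak{F}^i$ for $i>t$ via \cite[Theorem 3.2]{art1} and the bound $H^i_{\tilde{\mathfrak{a}}}(N)=0$ for $i>{\rm ara}(\tilde{\mathfrak{a}})$ is exactly the paper's argument (which cites \cite[Corollary 3.3.3]{grot} for the bound); your separate \v{C}ech-complex degree-count for $\check{\mathfrak{F}}$ becomes redundant once the $J$-torsion isomorphism is in hand. The coassociated-primes conclusion — finiteness of ${\rm Coass}\cap V(\mathfrak{a})$ from comments (i) and (ii), then exhaustion via Proposition \ref{contido} and comment (iii) — is correct and is exactly how the paper, through Proposition \ref{5.6} and the subsequent comment, finishes.
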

\begin{proof} Since $M$ is $J$-torsion we have that $\Gamma_{\mathfrak{a}}(M)$ is $J$-torsion, so we have $\check{\mathfrak{F}}^t_{\mathfrak{a}, \mathfrak{m}, J}(M)\cong \mathfrak{F}^t_{\mathfrak{a}, \mathfrak{m}, J}(M)$ \cite[Corollary 2.6]{paper}. Furthermore, by \cite[Theorem 3.2]{art1},
$$\mathfrak{F}^t_{\mathfrak{a}, \mathfrak{m}, J}(M)\cong  \displaystyle \varprojlim_{n } \displaystyle \varinjlim_{\tilde{\mathfrak{a}}\in \widetilde{W}(\mathfrak{m},J)}  H^t_{\tilde{\mathfrak{a}}}(M / \mathfrak{a}^n M).$$

The statement follows by \cite[Corollary 3.3.3]{grot}, Theorem \ref{artquoc},  Proposition \ref{5.6} and previous comment.


\end{proof}

\begin{cor} Let $\mathfrak{a}$ and $J$ be two ideals of $(R,\mathfrak{m}).$ Let  $M$ be a finitely generated $R$-module  $J$-torsion. Then $\mathfrak{F}^{d}_{\mathfrak{a}, \mathfrak{m}, J}(M)\otimes R/\mathfrak{a}$ is Artinian and $ {\rm Coass}_R \mathfrak{F}^{d}_{\mathfrak{a}, \mathfrak{m}, J}(M)$ is finite, where  $d=dim M/(\mathfrak{a}+J)M$.
\end{cor}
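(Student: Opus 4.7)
The plan is to reduce this corollary to two results already established in the paper by exploiting the top-degree vanishing provided by $d=\dim M/(\mathfrak{a}+J)M$. First I would invoke \cite[Corollary 2.6]{paper}, which under the $J$-torsion hypothesis on $M$ furnishes the isomorphism $\check{\mathfrak{F}}^i_{\mathfrak{a},\mathfrak{m},J}(M)\cong \mathfrak{F}^i_{\mathfrak{a},\mathfrak{m},J}(M)$ for every $i$. Combining this with \cite[Theorem 4.2]{paper} (invoked earlier in the paper to compute $\sup\{i:\mathfrak{F}^i_{\mathfrak{a},\mathfrak{m},J}(M)\neq 0\}=\dim M/(\mathfrak{a}+J)M$), we obtain $\check{\mathfrak{F}}^i_{\mathfrak{a},\mathfrak{m},J}(M)=0$ for all $i>d$, so all these modules are trivially Artinian above $d$.

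Next I would note that since $M$ is $J$-torsion, the submodule $\Gamma_{\mathfrak{a}}(M)\subseteq M$ is also $J$-torsion, so the hypothesis of Theorem \ref{artquoc} is satisfied. Applying that theorem at $t=d$ yields that $\check{\mathfrak{F}}^d_{\mathfrak{a},\mathfrak{m},J}(M)/\mathfrak{a}\check{\mathfrak{F}}^d_{\mathfrak{a},\mathfrak{m},J}(M)$ is Artinian. Transporting this across the isomorphism of \cite[Corollary 2.6]{paper} and using $N\otimes_R R/\mathfrak{a}\cong N/\mathfrak{a}N$ produces the first claim that $\mathfrak{F}^d_{\mathfrak{a},\mathfrak{m},J}(M)\otimes R/\mathfrak{a}$ is Artinian.

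For the second claim, I would apply Proposition \ref{5.6} at $t=d$, which (given the Artinianness above $d$ already verified) gives that $\mathrm{Coass}_R\check{\mathfrak{F}}^d_{\mathfrak{a},\mathfrak{m},J}(M)\cap V(\mathfrak{a})$ is finite; transferring again via the $J$-torsion isomorphism yields that $\mathrm{Coass}_R\mathfrak{F}^d_{\mathfrak{a},\mathfrak{m},J}(M)\cap V(\mathfrak{a})$ is finite. To drop the intersection with $V(\mathfrak{a})$, I would apply Proposition \ref{contido} to get $\mathrm{CoSupp}(\mathfrak{F}^d_{\mathfrak{a},\mathfrak{m},J}(M))\subseteq V(\mathfrak{a})$ and combine it with the standard containment $\mathrm{Coass}_R(-)\subseteq \mathrm{CoSupp}_R(-)$ recorded as item (iii) just before the theorem. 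This forces $\mathrm{Coass}_R\mathfrak{F}^d_{\mathfrak{a},\mathfrak{m},J}(M)\subseteq V(\mathfrak{a})$, so the intersection equals the full set, which is therefore finite.

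There is no real obstacle; the entire argument is a matter of recognizing that $d$ plays the same role here that $t=\sup\{\mathrm{ara}(\tilde{\mathfrak{a}})\}$ plays in the immediately preceding corollary, with \cite[Theorem 4.2]{paper} supplying the top-degree vanishing in place of \cite[Corollary 3.3.3]{grot}. The only thing to be careful about is ensuring that the Čech-to-formal translation is applied to the quotient $\check{\mathfrak{F}}^d/\mathfrak{a}\check{\mathfrak{F}}^d$ rather than only to $\check{\mathfrak{F}}^d$, but this is immediate from the naturality of the isomorphism in \cite[Corollary 2.6]{paper}.
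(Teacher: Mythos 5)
Your proposal matches the paper's proof in substance: the paper likewise reduces to $\check{\mathfrak{F}}^i\cong\mathfrak{F}^i$ via \cite[Corollary 2.6]{paper}, obtains Artinianness above $d$ from the vanishing of \cite[Theorem 4.2]{paper}, applies Theorem~\ref{artquoc} (implicitly, via ``the same idea as the previous result'') and Proposition~\ref{5.6} at $t=d$, and then removes the intersection with $V(\mathfrak{a})$ using Proposition~\ref{contido} together with $\mathrm{Coass}_R(-)\subseteq\mathrm{CoSupp}_R(-)$. Your spelling out the $\Gamma_{\mathfrak{a}}(M)$ being $J$-torsion (since $M$ is $J$-torsion) and the explicit invocation of Theorem~\ref{artquoc} makes the argument slightly more transparent than the paper's terse version, but the route is identical.
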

\begin{proof}By the same idea of the previous result, we have that $\check{\mathfrak{F}}^t_{\mathfrak{a}, \mathfrak{m}, J}(M)\cong \mathfrak{F}^t_{\mathfrak{a}, \mathfrak{m}, J}(M)$ \cite[Corollary 2.6]{paper}.  The result follows from Proposition \ref{5.6}, Proposition \ref{contido}  and  the fact that $\mathfrak{F}^{j}_{\mathfrak{a}, \mathfrak{m}, J}(M)=0$ for all $d<j$ \cite[Theorem 4.2]{paper}.

\end{proof}

\begin{lem}\label{decomp} Let $\mathfrak{a}$, $I$ and $J$ be ideals of $(R,\mathfrak{m}).$ Let $M$ be a finitely generated $R$-module. Then
$$\mathfrak{F}^{n}_{\mathfrak{a}, I, J}(M)\cong \mathfrak{F}^{n}_{\mathfrak{a}, I, J}(R)\otimes_R M$$
where $n:={\rm dim} R/\mathfrak{a}+J$.
\end{lem}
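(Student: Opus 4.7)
The plan is to mimic the classical right-exactness argument for top cohomology, now in the formal setting. The key is to use that $\mathfrak{F}^{n+1}_{\mathfrak{a},I,J}(-)$ vanishes on every finitely generated $R$-module, and then feed a finite free presentation of $M$ into the long exact sequence of \cite[Theorem 3.4]{paper}.

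First I would establish the vanishing: for every finitely generated $R$-module $N$,
$\mathfrak{F}^{n+1}_{\mathfrak{a},I,J}(N)=0$. This amounts to the bound $\sup\{i:\mathfrak{F}^i_{\mathfrak{a},I,J}(N)\neq 0\}\le \dim N/(\mathfrak{a}+J)N\le \dim R/(\mathfrak{a}+J)=n$, which is the natural extension of \cite[Theorem 4.2]{paper} (stated there for $I=\mathfrak{m}$) to arbitrary $I$. The route I would follow is to write $H^i_{I,J}(-)\cong \varinjlim_{\tilde{\mathfrak{a}}\in \widetilde{W}(I,J)} H^i_{\tilde{\mathfrak{a}}}(-)$ via \cite[Theorem 3.2]{art1}, reduce to the inverse system $\{H^i_{\tilde{\mathfrak{a}}}(N/\mathfrak{a}^kN)\}_k$, and adapt the dimension-counting argument of \cite[Theorem 4.2]{paper} to bound the surviving quotients in the limit by $\dim R/(\mathfrak{a}+J)$.

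With the vanishing in hand, take a finite free presentation
$R^q \xrightarrow{\phi} R^p \to M \to 0$, and set $K=\mathrm{im}(\phi)$, $K'=\ker(\phi)$. The short exact sequences $0\to K\to R^p\to M\to 0$ and $0\to K'\to R^q\to K\to 0$ produce, via the long exact sequence of \cite[Theorem 3.4]{paper} and the vanishing $\mathfrak{F}^{n+1}_{\mathfrak{a},I,J}(K)=\mathfrak{F}^{n+1}_{\mathfrak{a},I,J}(K')=0$, the right-exact sequence
$$\mathfrak{F}^n_{\mathfrak{a},I,J}(R)^q \xrightarrow{\mathfrak{F}^n(\phi)} \mathfrak{F}^n_{\mathfrak{a},I,J}(R)^p \longrightarrow \mathfrak{F}^n_{\mathfrak{a},I,J}(M) \longrightarrow 0,$$
where I have used $\mathfrak{F}^n_{\mathfrak{a},I,J}(R^t)\cong \mathfrak{F}^n_{\mathfrak{a},I,J}(R)^t$ (a consequence of the commutation of $H^*_{I,J}(-)$ and $\varprojlim$ with finite direct sums).

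Finally, I would identify $\mathfrak{F}^n(\phi)$ with $\mathfrak{F}^n_{\mathfrak{a},I,J}(R)\otimes_R \phi$ and apply right-exactness of $\mathfrak{F}^n_{\mathfrak{a},I,J}(R)\otimes_R-$ to the presentation to conclude
$$\mathfrak{F}^n_{\mathfrak{a},I,J}(M)\cong \mathrm{coker}\bigl(\mathfrak{F}^n_{\mathfrak{a},I,J}(R)\otimes_R \phi\bigr)\cong \mathfrak{F}^n_{\mathfrak{a},I,J}(R)\otimes_R M.$$
The principal obstacle is the first step: establishing the sharp vanishing $\mathfrak{F}^{n+1}_{\mathfrak{a},I,J}(N)=0$ for arbitrary pairs $(I,J)$, which extends \cite[Theorem 4.2]{paper} beyond the case $I=\mathfrak{m}$. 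Once this is secured, the rest is a routine diagram chase via the long exact sequence and the right exactness of tensor.
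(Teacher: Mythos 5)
Your argument is, in substance, the paper's argument: the paper quotes Watt's theorem \cite[Theorem 5.45]{brod} directly, while you simply unpack its standard proof by feeding a finite free presentation $R^q\to R^p\to M\to 0$ into the long exact sequence of \cite[Theorem 3.4]{paper} and using right exactness of $\mathfrak{F}^n_{\mathfrak{a},I,J}(R)\otimes_R -$. These are the same proof; indeed for a finitely generated $M$ the presentation argument is exactly the content of Watt's theorem, and it even sidesteps the usual requirement in Watt that the functor commute with \emph{infinite} direct sums, which is not obviously the case for an inverse-limit functor.

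The one place you diverge in emphasis is the ``principal obstacle'' you flag: you worry that the bound $\sup\{i:\mathfrak{F}^i_{\mathfrak{a},I,J}(N)\neq 0\}\le n$ (equivalently, right exactness of $\mathfrak{F}^n_{\mathfrak{a},I,J}(-)$) is only recorded in \cite[Theorem 4.2]{paper} for $I=\mathfrak{m}$. Note, however, that the paper's own proof of this lemma cites the very same \cite[Theorem 4.2]{paper} to conclude that $\mathfrak{F}^n_{\mathfrak{a},I,J}(-)$ is right exact for arbitrary $I$ — so the authors regard that vanishing/right-exactness statement as already available for the general pair $(I,J)$, and you are not being asked to supply it from scratch. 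Modulo that external input (which the paper's argument also takes as given), your proof is correct and complete. One small technicality worth making explicit in your write-up: to upgrade $\mathfrak{F}^{n+1}_{\mathfrak{a},I,J}(K)=0$ to exactness at the $\mathfrak{F}^n_{\mathfrak{a},I,J}(R^p)$ spot you also need exactness of $\mathfrak{F}^n_{\mathfrak{a},I,J}(R^q)\to\mathfrak{F}^n_{\mathfrak{a},I,J}(K)\to 0$, i.e. you must apply the vanishing to the second syzygy $K'$ exactly as you indicate; just be sure to observe that $K$ and $K'$ are again finitely generated so the dimension bound applies to them.
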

\begin{proof} By \cite[Theorem 4.2]{paper},  $\mathfrak{F}^{n}_{\mathfrak{a}, I, J}(-)$ is a right exact functor and by definition of inverse limit, $\mathfrak{F}^{i}_{\mathfrak{a}, I, J}(-)$ preserves finite direct sum. The statement follows by Watt's Theorem \cite[Theorem 5.45]{brod}.

\end{proof}

By this result, for finitely generated $R$-modules $M$,  we can see that $\mathfrak{F}^{n}_{\mathfrak{a}, I, J}(M)=0$ if and only if $\mathfrak{F}^{n}_{\mathfrak{a}, I, J}(R)=0$.

The \cite[Lemma 4.2]{majid} showed that if $R$  a local ring  and $M$ be an $R$-module, the set of minimal primes of ${\rm CoSupp}_R(M)$ is finite if, and only if, ${\rm CoSupp}_R(M)$ is a closed subset of ${\rm Spec}R$. So, by \cite[Theorem 2.6]{yassemi} we can conclude that ${\rm CoSupp}_R(\mathfrak{F}^{i}_{\mathfrak{a}, I, J}(M))$ is closed if and only if ${\rm Coass}_R(\mathfrak{F}^{i}_{\mathfrak{a}, I, J}(M))$ is finite, for some integer $i$. By Theorem \ref{top} we have that ${\rm CoSupp}_R(\mathfrak{F}^{d}_{\mathfrak{a}, I, J}(M))$ is closed, where ${\rm dim}M=d$.

\begin{rem} Let $(R,\mathfrak{m})$ be a Cohen Macaulay complete ring of dimension $d$ and $J$ be a perfect ideal of $R$, i.e, ${\rm grade}(J,R)= {\rm pd}_R R/J=t$. Then, if $M$ is a finitely generated $R$-module, by construction in Section \ref{section3} we have the isomorphism
$$ \mathfrak{F}^i_{\mathfrak{a},\mathfrak{m},J}(M)\cong H^{d-t-i}_{\mathfrak{a}}(M,S)^\vee$$ where $(-)^\vee= {\rm Hom}_R(-,E_R(\mathbb{K}))$, $S= H^{d-t}_{\mathfrak{m},J}(R)^\vee$, $i \in \mathbb{Z}$.

\noindent  Therefore, by \cite[Corollary 1.18]{yassemi} (respectively \cite[Corollary 2.9]{yassemi}) we obtain that, if $H^{d-t-i}_{\mathfrak{a}}(M,S)$ is a finite module, then
$${\rm CoSupp}_R(\mathfrak{F}^{i}_{\mathfrak{a}, \mathfrak{m}, J}(M))= {\rm Supp}_R (H^{d-t-i}_{\mathfrak{a}}(M,S) )  \ \ \  {\rm and}$$
$${\rm Coass}_R(\mathfrak{F}^{i}_{\mathfrak{a}, \mathfrak{m}, J}(M))= {\rm Ass}_R (H^{d-t-i}_{\mathfrak{a}}(M,S) ).$$


\end{rem}

Our next result generalizes \cite[Proposition 4.4]{majidp}.
\begin{prop}Let $(R,\mathfrak{m})$ be a Cohen Macaulay complete ring of dimension $d$ and $J$ be a perfect ideal of $R$, i.e, ${\rm grade}(J,R)= {\rm pd}_R R/J=t.$ If $M$ is a finitely generated $R$-module, then
$${\rm Coass}_R(\mathfrak{F}^{n}_{\mathfrak{a}, \mathfrak{m}, J}(M))= {\rm Supp}_R(M)\cap {\rm Ass}_R (H^{d-t-n}_{\mathfrak{a}}(S) )$$
where $n:={\rm dim} R/\mathfrak{a}+J$.

\end{prop}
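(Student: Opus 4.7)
\begin{prf}
The plan is to combine three ingredients already assembled in the paper: the decomposition of $\mathfrak{F}^n_{\mathfrak{a},\mathfrak{m},J}(-)$ as a tensor product, the Matlis‐dual interpretation furnished by the previous Remark, and standard behaviour of coassociated primes under tensoring and Matlis duality.

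First, since $n={\rm dim}R/(\mathfrak{a}+J)$, Lemma \ref{decomp} yields the isomorphism
$$\mathfrak{F}^{n}_{\mathfrak{a},\mathfrak{m},J}(M)\cong \mathfrak{F}^{n}_{\mathfrak{a},\mathfrak{m},J}(R)\otimes_R M.$$
Applying the previous Remark to the module $R$ (so that the generalized local cohomology $H^{d-t-n}_{\mathfrak{a}}(R,S)$ reduces to the ordinary $H^{d-t-n}_{\mathfrak{a}}(S)$), we get
$$\mathfrak{F}^{n}_{\mathfrak{a},\mathfrak{m},J}(R)\cong H^{d-t-n}_{\mathfrak{a}}(S)^{\vee}.$$

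Next, I would invoke the standard formula for coassociated primes of a tensor product by a finitely generated module (Yassemi, the analogue of the classical identity for Supp and Ass): for any $R$-module $N$ and finitely generated $M$,
$$ {\rm Coass}_R(N\otimes_R M)={\rm Supp}_R(M)\cap {\rm Coass}_R(N).$$
Combined with the two isomorphisms above this gives
$$ {\rm Coass}_R(\mathfrak{F}^{n}_{\mathfrak{a},\mathfrak{m},J}(M))={\rm Supp}_R(M)\cap {\rm Coass}_R(H^{d-t-n}_{\mathfrak{a}}(S)^{\vee}).$$

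To finish I would use that, in the present complete local setting, the Matlis dual interchanges coassociated and associated primes; by the definition adopted in the paper, ${\rm Coass}_R(H^{d-t-n}_{\mathfrak{a}}(S)^{\vee})={\rm Ass}_R(H^{d-t-n}_{\mathfrak{a}}(S)^{\vee\vee})$, and Yassemi's Theorem 1.14 (already cited in Theorem 5.4) identifies this with ${\rm Ass}_R(H^{d-t-n}_{\mathfrak{a}}(S))$. Substituting yields the announced equality. The only delicate point is the last step: one must make sure that the biduality identity ${\rm Coass}_R(N^{\vee})={\rm Ass}_R(N)$ is applied in the generality needed here, since $H^{d-t-n}_{\mathfrak{a}}(S)$ is not a priori finitely generated or Artinian; this is where the completeness hypothesis on $R$ is crucial and where I expect the technical care to lie.
\end{prf}
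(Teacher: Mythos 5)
Your proposal reproduces the paper's argument almost verbatim: Lemma~\ref{decomp} to factor $\mathfrak{F}^{n}_{\mathfrak{a},\mathfrak{m},J}(M)\cong \mathfrak{F}^{n}_{\mathfrak{a},\mathfrak{m},J}(R)\otimes_R M$, Yassemi's tensor-product formula \cite[Theorem 1.21]{yassemi} to extract ${\rm Supp}_R(M)$, and the preceding Remark (applied with $M=R$, so that $H^{d-t-n}_{\mathfrak a}(R,S)=H^{d-t-n}_{\mathfrak a}(S)$) to identify ${\rm Coass}_R(\mathfrak{F}^{n}_{\mathfrak{a},\mathfrak{m},J}(R))$ with ${\rm Ass}_R(H^{d-t-n}_{\mathfrak a}(S))$. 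The ``delicate point'' you isolate at the end is a genuine one, but it is a deficiency of the statement rather than of your argument: the Remark you invoke carries the explicit hypothesis that the generalized local cohomology module be finite, and Yassemi's \cite[Corollary 2.9]{yassemi} on Matlis biduality of ${\rm Coass}$/${\rm Ass}$ needs that (or Artinianness) to apply, yet the Proposition omits any such hypothesis on $H^{d-t-n}_{\mathfrak a}(S)$; the paper's own proof tacitly makes the same elision, so you have in effect located an unstated assumption in the source.
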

\begin{proof} By Lemma \ref{decomp}, \cite[Theorem 1.21]{yassemi} and previous comment follow that
$$
\begin{array}{lll}
{\rm Coass}_R(\mathfrak{F}^{n}_{\mathfrak{a}, \mathfrak{m}, J}(M))& = & {\rm Coass}_R(\mathfrak{F}^{n}_{\mathfrak{a}, \mathfrak{m}, J}(R)\otimes_R M)\\
&=& {\rm Supp}_R(M)\cap {\rm Coass}_R(\mathfrak{F}^{n}_{\mathfrak{a}, \mathfrak{m}, J}(R))\\
&=&{\rm Supp}_R(M)\cap {\rm Ass}_R (H^{d-t-n}_{\mathfrak{a}}(S) ).
\end{array}
$$

\end{proof}




\section{Finitenness of $\mathfrak{\check{F}}^i_{\mathfrak{a},\mathfrak{m},J}(M)$ and $\mathfrak{F}^i_{\mathfrak{a},\mathfrak{m},J}(M)$}

\hspace{0.5cm} In this section, we investigate the finiteness of \v{C}ech formal local cohomology and formal local cohomology modules with respect to a pair of ideals. We give an important criterion for finiteness, and we show an especific case of non-finiteness for  \v{C}ech formal local cohomology and formal local cohomology modules with respect to a pair of ideals.  The next result extends \cite[Theorem 2.8]{yan}.

\begin{thm}\label{final}
Let  $\mathfrak{a},J$ be two ideals of $(R,\mathfrak{m})$. Let $M$ be a finitely generated $R$-module and $t\geq 1$ be an integer. The following statements are equivalent:

\begin{itemize}
\item[(1)] $\mathfrak{\check{F}}^i_{\mathfrak{a},\mathfrak{m},J}(M)=0$ for all $i\geq t$;
\item[(2)] $\mathfrak{\check{F}}^i_{\mathfrak{a},\mathfrak{m},J}(M)$ is finitely generated for all $i\geq t$;
\item[(3)] $\mathfrak{\check{F}}^i_{\mathfrak{a},\mathfrak{m},J}(R/\mathfrak{p})= 0$ for all $i\geq t$, $\mathfrak{p}\in {\rm Supp} M$;
\item[(4)] $\mathfrak{\check{F}}^i_{\mathfrak{a},\mathfrak{m},J}(R/\mathfrak{p})$ is finitely generated for all $i\geq t$, $\mathfrak{p}\in {\rm Supp} M$.
\end{itemize}
\end{thm}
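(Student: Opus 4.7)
The implications $(1) \Rightarrow (2)$ and $(3) \Rightarrow (4)$ are immediate; the plan is to close the cycle via $(3) \Rightarrow (1)$, $(2) \Rightarrow (4)$, and the main content $(4) \Rightarrow (3)$. For both prime-to-module implications I would fix a prime filtration $0 = M_0 \subsetneq M_1 \subsetneq \cdots \subsetneq M_n = M$ with $M_j/M_{j-1} \cong R/\mathfrak{p}_j$ and each $\mathfrak{p}_j \in \mathrm{Supp}(M)$, then exploit the long exact sequence of $\check{\mathfrak{F}}^{\bullet}_{\mathfrak{a},\mathfrak{m},J}$ (Theorem~3.4 of \cite{paper}) attached to each $0 \to M_{j-1} \to M_j \to R/\mathfrak{p}_j \to 0$. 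An ascending induction on $j$ transports vanishing from the $R/\mathfrak{p}_j$ up to $M = M_n$, giving $(3) \Rightarrow (1)$. For $(2) \Rightarrow (4)$, the middle term $\check{\mathfrak{F}}^i(R/\mathfrak{p}_j)$ of the same long exact sequence sits between a quotient of $\check{\mathfrak{F}}^i(M_j)$ and a submodule of $\check{\mathfrak{F}}^{i+1}(M_{j-1})$, so a descending induction on $j$ starting from $M = M_n$ transfers finite generation to each filtration prime; the extension to an arbitrary $\mathfrak{p} \in \mathrm{Supp}(M)$ follows by picking $\mathfrak{p}_j \subseteq \mathfrak{p}$ and inducting on $\dim R/\mathfrak{p}_j$ via $0 \to \mathfrak{p}/\mathfrak{p}_j \to R/\mathfrak{p}_j \to R/\mathfrak{p} \to 0$.

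The crux is $(4) \Rightarrow (3)$: fix $\mathfrak{p} \in \mathrm{Supp}(M)$ and $i \geq t$, set $F := \check{\mathfrak{F}}^i_{\mathfrak{a},\mathfrak{m},J}(R/\mathfrak{p})$, assumed finitely generated, and show $F = 0$. My plan is a dimension induction on $d := \dim R/\mathfrak{p}$. The base case $d = 0$ is immediate: $R/\mathfrak{p}$ has finite length, so $\check{\mathfrak{F}}^i(R/\mathfrak{p}) = 0$ for $i \geq 1 \geq t$ by \cite[Proposition~4.1]{paper}. For the inductive step, after reducing to the case $\mathfrak{a} \not\subseteq \mathfrak{p}$, choose $x \in \mathfrak{a}$ regular on $R/\mathfrak{p}$ and apply the long exact sequence of $\check{\mathfrak{F}}^{\bullet}$ to $0 \to R/\mathfrak{p} \xrightarrow{x} R/\mathfrak{p} \to R/(\mathfrak{p}+xR) \to 0$; the inductive hypothesis on the right term forces $F = xF$. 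Iterating yields $F = x^n F \subseteq \mathfrak{a}^n F$ for all $n$, and the $\mathfrak{a}$-adic Krull intersection $\bigcap_n \mathfrak{a}^n F = 0$ (the $\check{\mathfrak{F}}$-analog of Proposition~\ref{inter}, provable by the same argument since each $H^i_{\mathfrak{m},J}(R/(\mathfrak{p}+\mathfrak{a}^n))$ is annihilated by $\mathfrak{a}^n$) then gives $F = 0$.

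The main obstacle I expect is precisely the step $F = xF \Rightarrow F = 0$ for a finitely generated $F$ in a local ring: Nakayama's lemma is not directly available since $x \in \mathfrak{a}$ need not lie in the Jacobson radical, so the argument must genuinely invoke the Krull-intersection property for $\check{\mathfrak{F}}$ rather than any purely module-theoretic trick. The delicate boundary case is when no $R/\mathfrak{p}$-regular element of $\mathfrak{a}$ exists, i.e.\ $\mathfrak{a} \subseteq \mathfrak{p}$, in which case $\check{\mathfrak{F}}^i_{\mathfrak{a},\mathfrak{m},J}(R/\mathfrak{p}) \cong H^i_{\mathfrak{m},J}(R/\mathfrak{p})$ since $\mathfrak{a}^n\cdot R/\mathfrak{p} = 0$; this must be handled separately by combining Theorem~\ref{artquoc} with the observation that a finitely generated Artinian module over a local ring has finite length, together with the top-degree characterization in Theorem~\ref{top}.
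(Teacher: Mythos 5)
Your overall plan---closing the cycle via $(1)\Rightarrow(2)$, $(2)\Rightarrow(4)$, $(4)\Rightarrow(3)$, $(3)\Rightarrow(1)$---is organizationally different from the paper (which proves $(2)\Rightarrow(1)$ directly, then $(1)\Rightarrow(3)$ by a degree bound, $(3)\Rightarrow(1)$ by prime filtration, and remarks that $(3)\Leftrightarrow(4)$ is analogous to $(1)\Leftrightarrow(2)$), and such a reorganization would be fine in principle. But your central step $(4)\Rightarrow(3)$ rests on a misconception that sends you off in the wrong direction: you say ``Nakayama's lemma is not directly available since $x\in\mathfrak{a}$ need not lie in the Jacobson radical,'' but $(R,\mathfrak{m})$ is local, so the Jacobson radical \emph{is} $\mathfrak{m}$, and every proper ideal $\mathfrak{a}$ sits inside $\mathfrak{m}$. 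Hence $x\in\mathfrak{a}\subseteq\mathfrak{m}$ and Nakayama applies immediately to the finitely generated module $F$ with $F=xF$. This is exactly the mechanism the paper uses in its proof of $(2)\Rightarrow(1)$: induct on $\dim M$, reduce to $\operatorname{depth} M>0$ by killing $\Gamma_{\mathfrak{m}}(M)$, pick \emph{any} $M$-regular $x\in\mathfrak{m}$ (no need for $x\in\mathfrak{a}$), deduce $x\check{\mathfrak{F}}^i(M)=\check{\mathfrak{F}}^i(M)$ from the long exact sequence and the inductive hypothesis on $M/xM$, and conclude by Nakayama. Because the paper's induction runs over \emph{all} finitely generated modules of smaller dimension, it applies directly to $M/xM$ and needs no boundary case.

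By contrast, the detour you construct to avoid Nakayama creates real gaps. First, the ``$\check{\mathfrak{F}}$-analog of Proposition~\ref{inter}'' is not provable ``by the same argument'': Proposition~\ref{inter} exploits that $\mathfrak{F}^i$ \emph{is} a projective limit of modules annihilated by $\mathfrak{a}^n$, whereas $\check{\mathfrak{F}}^i$ is a cohomology subquotient of a projective limit of complexes, and intersection properties of the ambient inverse limit do not automatically descend to subquotients; the paper carefully states Proposition~\ref{inter} for $\mathfrak{F}^i$ only. Second, your boundary case $\mathfrak{a}\subseteq\mathfrak{p}$ reduces to showing that $H^i_{\mathfrak{m},J}(R/\mathfrak{p})$ finitely generated forces vanishing, but neither Theorem~\ref{artquoc} (which concerns Artinianness of $\check{\mathfrak{F}}^t/\mathfrak{a}\check{\mathfrak{F}}^t$) nor Theorem~\ref{top} (top-degree $\mathfrak{F}^d$) addresses this, and $H^i_{\mathfrak{m},J}$ is in general not Artinian (as the paper itself recalls via Tehranian--Talemi). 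Third, your induction on $\dim R/\mathfrak{p}$ applied to the long exact sequence requires the inductive hypothesis on $R/(\mathfrak{p}+xR)$, which is not of the form $R/\mathfrak{q}$; you would need an interpolating prime filtration step that is not written out. Finally, your $(2)\Rightarrow(4)$ via descending prime filtration does not close: to deduce that $\check{\mathfrak{F}}^t(M_{j-1})$ is finitely generated from the long exact sequence, you need control of $\check{\mathfrak{F}}^{t-1}(R/\mathfrak{p}_j)$, which sits in degree $t-1$ outside the range covered by your hypothesis, so the finite generation does not actually propagate down the filtration at the critical degree $i=t$.
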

\begin{proof} $(1)\Rightarrow (2)$ is clear.

$(2)\Rightarrow (1)$ Consider $d= {\rm dim} M \geq \dim (M/{\mathfrak a}M)$ and we argue by induction on $d$. If $d=0$, we have that $\mathfrak{\check{F}}^i_{\mathfrak{a},\mathfrak{m},J}(M)\cong \mathfrak F^i_{\mathfrak{a},\mathfrak{m},J}(M)=0$ for all $i\geq 1$ \cite[Proposition 4.1]{paper}.
So, suppose that $d>0$ and the result has been proved for smaller values of $d$. Firstly, we assume ${\rm depth} M>0$. Thus, there exists an $M$-regular element $x$ in $\mathfrak{m}$. The short exact sequence
$$0\rightarrow M\stackrel{x}{\rightarrow} M\rightarrow M/xM\rightarrow 0$$
induces the next long exact sequence of formal local cohomology
$$\cdots \rightarrow \mathfrak{\check{F}}^i_{\mathfrak{a},\mathfrak{m},J}(M)\stackrel{x}{\rightarrow} \mathfrak{\check{F}}^i_{\mathfrak{a},\mathfrak{m},J}(M)\rightarrow \mathfrak{\check{F}}^i_{\mathfrak{a},\mathfrak{m},J}( M/xM)\rightarrow \mathfrak{\check{F}}^{i+1}_{\mathfrak{a},\mathfrak{m},J}(M)\rightarrow \cdots.$$

By the inductive hypothesis, we have $\mathfrak{\check{F}}^i_{\mathfrak{a},\mathfrak{m},J}( M/xM)=0$ for all $i\geq t$ e then $x\mathfrak{\check{F}}^i_{\mathfrak{a},\mathfrak{m},J}( M)= \mathfrak{\check{F}}^i_{\mathfrak{a},\mathfrak{m},J}( M)$ for all $i\geq t$. Since $\mathfrak{\check{F}}^i_{\mathfrak{a},\mathfrak{m},J}( M)$ is finitely generated  for all $i\geq t$, we have $\mathfrak{\check{F}}^i_{\mathfrak{a},\mathfrak{m},J}( M)=0$ for all $i\geq t$.

Now, assume ${\rm depth}M=0$ and consider  $N=H_{\mathfrak{m}}^0(M)$. Note that
$$\mathfrak{\check{F}}^0_{\mathfrak{a},\mathfrak{m},J}( N)\cong \mathfrak F^0_{\mathfrak{a},\mathfrak{m},J}(N)=\displaystyle \varprojlim_{ n }H^0_{\mathfrak{m},J}(N/\mathfrak{a}^n N)= N,$$
and $\mathfrak{\check{F}}^i_{\mathfrak{a},\mathfrak{m},J}( N)=0$ for all $i\geq 1$ because $N$ is too $(\mathfrak{m},J)$ torsion and Artinian $R$-module. From the short exact sequence $0\rightarrow N\rightarrow M\rightarrow M/N\rightarrow 0$ we obtain that $\mathfrak{\check{F}}^i_{\mathfrak{a},\mathfrak{m},J}(M)\cong \mathfrak{\check{F}}^i_{\mathfrak{a},\mathfrak{m},J}(M/N)$ for all $i\geq 1$. By this, we may assume that $M$ is $\mathfrak{m}$-torsion free and the statement follows by the first argument.

$(1)\Rightarrow (3)$ By \cite[Proposition 2.4 and Theorem 4.2]{paper} we have that
$$\dim_R M/(\mathfrak{a}+J)M= {\rm sup} \{i \in \mathbb{Z} \ | \ \mathfrak{F}^i_{\mathfrak{a},\mathfrak{m},J}(M)\neq 0\}\leq {\rm sup} \{i \in \mathbb{Z} \ | \ \mathfrak{\check{F}}^i_{\mathfrak{a},\mathfrak{m},J}(M)\neq 0\}.$$
Since ${\rm dim}\frac{R}{(\mathfrak{a}+J)+\mathfrak{p}}\leq \dim\frac{M}{(\mathfrak{a}+J)M} $ for all $\mathfrak{p}\in {\rm Supp}M$, we can conclude that ${\rm sup} \{i \in \mathbb{Z} \ | \ \mathfrak{\check{F}}^i_{\mathfrak{a},\mathfrak{m},J}(R/\mathfrak{p})\neq 0\}\leq {\rm sup} \{i \in \mathbb{Z} \ | \ \mathfrak{\check{F}}^i_{\mathfrak{a},\mathfrak{m},J}(M)\neq 0\}.$ Therefore, $\mathfrak{\check{F}}^i_{\mathfrak{a},\mathfrak{m},J}(R/\mathfrak{p})=0$ for all $i\geq t$.

$(3)\Rightarrow (1)$ Consider a prime filtration $0=M_0\subseteq M_1\subseteq \ldots \subseteq M_s=M$ of submodules de $M$ such that $M_j/M_{j-1}\cong R/\mathfrak{p}_j$ where $p_j \in {\rm Supp M}$ and $1\leq j\leq s$. By induction on $j$, and the exact sequence
$$\mathfrak{\check{F}}^i_{\mathfrak{a},\mathfrak{m},J}(M_{j-1})\rightarrow \mathfrak{\check{F}}^i_{\mathfrak{a},\mathfrak{m},J}(M_j)\rightarrow \mathfrak{\check{F}}^i_{\mathfrak{a},\mathfrak{m},J}( R/\mathfrak{p}_j)$$ follows that $ \mathfrak{\check{F}}^i_{\mathfrak{a},\mathfrak{m},J}(M)=0$ for all $i\geq t.$

$(3)\Leftrightarrow (4)$ It is analogous the previous proof of $(1)\Leftrightarrow (2)$ and this finishes the result.

\end{proof}

An immediate consequence of the previous result is the next corollary.

\begin{cor} Let $\dim_R M/(\mathfrak{a}+J)M = d >0$ with $J\neq R$. Then $\mathfrak{\check{F}}^d_{\mathfrak{a},\mathfrak{m},J}(M)$ is not finitely generated.
\end{cor}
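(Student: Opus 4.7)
The plan is to argue by contradiction, assuming that $\mathfrak{\check{F}}^d_{\mathfrak{a},\mathfrak{m},J}(M)$ is finitely generated, and then invoking Theorem \ref{final} with $t=d$ to force a vanishing that clashes with the non-triviality of the top formal cohomology.

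The first step is to pin down the top non-vanishing index on both sides. By \cite[Theorem 4.2]{paper},
\[
d=\dim_R M/(\mathfrak{a}+J)M=\sup\{i:\mathfrak{F}^i_{\mathfrak{a},\mathfrak{m},J}(M)\neq 0\},
\]
so $\mathfrak{F}^d_{\mathfrak{a},\mathfrak{m},J}(M)\neq 0$ and $\mathfrak{F}^i_{\mathfrak{a},\mathfrak{m},J}(M)=0$ for $i>d$. Combining the inequality $\sup\{i:\mathfrak{F}^i_{\mathfrak{a},\mathfrak{m},J}(M)\neq 0\}\leq\sup\{i:\mathfrak{\check{F}}^i_{\mathfrak{a},\mathfrak{m},J}(M)\neq 0\}$, which is invoked in the proof of Theorem \ref{final}(1)$\Rightarrow$(3), with the \v{C}ech analog of \cite[Theorem 4.2]{paper} (which bounds $\sup\{i:\mathfrak{\check{F}}^i_{\mathfrak{a},\mathfrak{m},J}(M)\neq 0\}$ from above by $d$), I would obtain $\mathfrak{\check{F}}^d_{\mathfrak{a},\mathfrak{m},J}(M)\neq 0$ and $\mathfrak{\check{F}}^i_{\mathfrak{a},\mathfrak{m},J}(M)=0$ for all $i>d$.

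With these facts in hand, the finite generation of $\mathfrak{\check{F}}^d_{\mathfrak{a},\mathfrak{m},J}(M)$ together with the vanishing above $d$ gives that $\mathfrak{\check{F}}^i_{\mathfrak{a},\mathfrak{m},J}(M)$ is finitely generated for every $i\geq d$. Applying Theorem \ref{final}(2)$\Rightarrow$(1) with $t=d$, I conclude that $\mathfrak{\check{F}}^i_{\mathfrak{a},\mathfrak{m},J}(M)=0$ for all $i\geq d$, in particular $\mathfrak{\check{F}}^d_{\mathfrak{a},\mathfrak{m},J}(M)=0$. This contradicts the non-vanishing just established, proving the corollary.

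The principal obstacle is securing the bound $\mathfrak{\check{F}}^i_{\mathfrak{a},\mathfrak{m},J}(M)=0$ for $i>d$: without it, we only know finite generation at the single index $i=d$, which is not enough to trigger the equivalence in Theorem \ref{final}. This vanishing should either be available as the direct \v{C}ech-formal companion of \cite[Theorem 4.2]{paper}, or else be deduced from the finite length of the \v{C}ech complex $\check{C}_{\underline{x},J}$ after passing to the inverse limit. The hypotheses $d>0$ and $J\neq R$ are used to keep the construction non-degenerate and to guarantee that the non-vanishing of $\mathfrak{F}^d_{\mathfrak{a},\mathfrak{m},J}(M)$ is substantive rather than vacuous.
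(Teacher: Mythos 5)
Your strategy — argue by contradiction, upgrade finite generation at $i=d$ to all $i\geq d$, invoke Theorem \ref{final}(2)$\Rightarrow$(1), and clash with a non-vanishing result — is exactly the one the paper uses. The one place you diverge is unnecessary and slightly weakens your argument: you try to establish $\mathfrak{\check{F}}^d_{\mathfrak{a},\mathfrak{m},J}(M)\neq 0$ \emph{a priori}, which forces you to invoke an upper bound $\sup\{i:\mathfrak{\check{F}}^i_{\mathfrak{a},\mathfrak{m},J}(M)\neq 0\}\leq d$ (your ``\v{C}ech analog of Theorem 4.2'') that is nowhere stated in the paper and that you correctly flag as your principal obstacle. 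The paper sidesteps this entirely: once Theorem \ref{final} gives $\mathfrak{\check{F}}^d_{\mathfrak{a},\mathfrak{m},J}(M)=0$, it applies \cite[Proposition 2.4]{paper} (which produces a surjection $\mathfrak{\check{F}}^d_{\mathfrak{a},\mathfrak{m},J}(M)\twoheadrightarrow\mathfrak{F}^d_{\mathfrak{a},\mathfrak{m},J}(M)$) to conclude $\mathfrak{F}^d_{\mathfrak{a},\mathfrak{m},J}(M)=0$, and the contradiction lives on the $\mathfrak{F}$-side where the non-vanishing at degree $d$ is already secured by \cite[Theorem 4.2]{paper}. You should replace your non-vanishing claim for $\mathfrak{\check{F}}^d$ with this transfer via Proposition 2.4. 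Note, however, that the other place you both rely on the vanishing $\mathfrak{\check{F}}^i_{\mathfrak{a},\mathfrak{m},J}(M)=0$ for $i>d$ — namely, to pass from ``$\mathfrak{\check{F}}^d$ finitely generated'' to ``$\mathfrak{\check{F}}^i$ finitely generated for all $i\geq d$'' — is genuinely shared with the paper, which asserts this step without explicit citation; so that part of your concern is legitimate and applies to the paper's proof as well.
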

\begin{proof} If $\mathfrak{\check{F}}^d_{\mathfrak{a},\mathfrak{m},J}(M)$ is finitely generated, then  $\mathfrak{\check{F}}^i_{\mathfrak{a},\mathfrak{m},J}(M)$ is finitely generated for all $i\geq d$. Therefore, by previous theorem $\mathfrak{\check{F}}^i_{\mathfrak{a},\mathfrak{m},J}(M)=0$ for all $i\geq d$. Remember that
$d= {\rm sup} \{i \in \mathbb{Z} \ | \ \mathfrak{F}^i_{\mathfrak{a},\mathfrak{m},J}(M)\neq 0\}\leq {\rm sup} \{i \in \mathbb{Z} \ | \ \mathfrak{\check{F}}^i_{\mathfrak{a},\mathfrak{m},J}(M)\neq 0\}$. Since $\mathfrak{\check{F}}^d_{\mathfrak{a},\mathfrak{m},J}(M)=0$, by \cite[Proposition 2.4]{paper} we have  that $\mathfrak{F}^d_{\mathfrak{a},\mathfrak{m},J}(M)=0$ and this is a contradiction.

\end{proof}

With respect the relation of finiteness of the formal local cohomology with respect to a pair of ideals, we give the following result.

\begin{cor}Let $\mathfrak{a},J$ be two ideals of $(R,\mathfrak{m})$. Let $M$ be a finitely generated $R$-module and $t\geq 1$ be an integer. The following statements are equivalent:

\begin{itemize}
\item[(1)] $\mathfrak{F}^i_{\mathfrak{a},\mathfrak{m},J}(M)=0$ for all $i\geq t$;
\item[(2)] $\mathfrak{F}^i_{\mathfrak{a},\mathfrak{m},J}(M)$ is finitely generated for all $i\geq t$;
\item[(3)] $\mathfrak{F}^i_{\mathfrak{a},\mathfrak{m},J}(R/\mathfrak{p})= 0$ for all $i\geq t$, $\mathfrak{p}\in {\rm Supp} M$;
\item[(4)] $\mathfrak{F}^i_{\mathfrak{a},\mathfrak{m},J}(R/\mathfrak{p})$ is finitely generated for all $i\geq t$, $\mathfrak{p}\in {\rm Supp} M$.

\end{itemize}
Furthermore, if $\dim_R M/(\mathfrak{a}+J)M = d >0$ with $J\neq R$, then $\mathfrak{F}^d_{\mathfrak{a},\mathfrak{m},J}(M)$ is not finitely generated.
\end{cor}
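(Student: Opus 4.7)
The plan is to mirror the proof of Theorem~\ref{final} verbatim, replacing $\mathfrak{\check F}^i_{\mathfrak{a},\mathfrak{m},J}$ by $\mathfrak{F}^i_{\mathfrak{a},\mathfrak{m},J}$ throughout. The implications $(1)\Rightarrow(2)$ and $(3)\Rightarrow(4)$ are immediate. For $(1)\Leftrightarrow(3)$, I would invoke \cite[Theorem 4.2]{paper} to identify $\sup\{i\mid \mathfrak{F}^i_{\mathfrak{a},\mathfrak{m},J}(M)\neq 0\}$ with $\dim M/(\mathfrak{a}+J)M$ and then use the inequality $\dim R/(\mathfrak{p}+\mathfrak{a}+J)\leq \dim M/(\mathfrak{a}+J)M$ for each $\mathfrak{p}\in\mathrm{Supp}\,M$ to derive $(1)\Rightarrow(3)$; the converse $(3)\Rightarrow(1)$ would follow from a standard prime filtration $0=M_0\subseteq\cdots\subseteq M_s=M$ with $M_j/M_{j-1}\cong R/\mathfrak{p}_j$, applying the long exact sequence in $\mathfrak{F}^i_{\mathfrak{a},\mathfrak{m},J}$ inductively on $j$. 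The equivalence $(3)\Leftrightarrow(4)$ is then just $(1)\Leftrightarrow(2)$ applied to each cyclic module $R/\mathfrak{p}$ separately.

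The substantive implication is $(2)\Rightarrow(1)$, which I would prove by induction on $d=\dim M$. The base case $d=0$ is covered by \cite[Proposition 4.1]{paper}, giving $\mathfrak{F}^i_{\mathfrak{a},\mathfrak{m},J}(M)=0$ for all $i\geq 1$. For the inductive step, first assume $\mathrm{depth}\,M>0$ and pick an $M$-regular $x\in\mathfrak{m}$; the short exact sequence $0\to M\xrightarrow{x}M\to M/xM\to 0$ induces a long exact sequence in $\mathfrak{F}^i_{\mathfrak{a},\mathfrak{m},J}$. Since $\dim M/xM=d-1$ and the hypothesis passes to $M/xM$, the inductive hypothesis forces $\mathfrak{F}^i_{\mathfrak{a},\mathfrak{m},J}(M/xM)=0$ for $i\geq t$, so that multiplication by $x$ is surjective on $\mathfrak{F}^i_{\mathfrak{a},\mathfrak{m},J}(M)$; since the latter is finitely generated and $x\in\mathfrak{m}$, Nakayama's Lemma yields vanishing. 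If instead $\mathrm{depth}\,M=0$, I would set $N=H^0_{\mathfrak{m}}(M)$ and, using that $\mathfrak{F}^0_{\mathfrak{a},\mathfrak{m},J}(N)\cong N$ is Artinian with $\mathfrak{F}^i_{\mathfrak{a},\mathfrak{m},J}(N)=0$ for $i\geq 1$, conclude from the exact sequence for $0\to N\to M\to M/N\to 0$ that $\mathfrak{F}^i_{\mathfrak{a},\mathfrak{m},J}(M)\cong \mathfrak{F}^i_{\mathfrak{a},\mathfrak{m},J}(M/N)$ for $i\geq 1$; since $M/N$ has positive depth, we reduce to the previous case.

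For the final non-finiteness assertion, suppose $\mathfrak{F}^d_{\mathfrak{a},\mathfrak{m},J}(M)$ were finitely generated with $d=\dim M/(\mathfrak{a}+J)M>0$. The equivalence $(1)\Leftrightarrow(2)$ with $t=d$ would then force $\mathfrak{F}^i_{\mathfrak{a},\mathfrak{m},J}(M)=0$ for all $i\geq d$, contradicting the identity $d=\sup\{i\mid\mathfrak{F}^i_{\mathfrak{a},\mathfrak{m},J}(M)\neq 0\}$ from \cite[Theorem 4.2]{paper}.

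The main obstacle I expect is justifying the long exact sequence in $\mathfrak{F}^i_{\mathfrak{a},\mathfrak{m},J}$ attached to a short exact sequence of modules. Unlike the \v{C}ech version---where the flatness of $\check C_{\underline x,J}$ produces a short exact sequence of complexes before the cohomology step---here we must pass through the inverse limit $\varprojlim H^i_{\mathfrak{m},J}(-/\mathfrak{a}^n-)$, which is merely left exact. I would address this either by verifying a Mittag--Leffler condition on the inverse systems (automatic when intermediate modules are Artinian) or by invoking \cite[Proposition 2.4]{paper} to transfer the long exact sequence of \v{C}ech formal local cohomology---already used in the proof of Theorem~\ref{final}---to $\mathfrak{F}$; once this is in place the argument above becomes a faithful transcription of the preceding proof.
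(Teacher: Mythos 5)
The paper dispatches this corollary in one line: it invokes Theorem~\ref{final} together with the exact sequence of \cite[Proposition~2.4]{paper} comparing $\mathfrak{\check F}^i_{\mathfrak{a},\mathfrak{m},J}(M)$ with $\mathfrak{F}^i_{\mathfrak{a},\mathfrak{m},J}(M)$, rather than re-running the induction. Your plan of transcribing the entire proof of Theorem~\ref{final} with $\mathfrak{F}$ in place of $\mathfrak{\check F}$ is a genuinely different route, and you correctly put your finger on the precise point where the transcription is not automatic: the existence of a long exact sequence in $\mathfrak{F}^\bullet_{\mathfrak{a},\mathfrak{m},J}$ attached to a short exact sequence of modules. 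That long exact sequence is used twice in your argument---once for $0\to M\xrightarrow{x}M\to M/xM\to 0$ (and for $0\to N\to M\to M/N\to 0$) in the implication $(2)\Rightarrow(1)$, and again for the prime filtration in $(3)\Rightarrow(1)$---so it is not a side issue but the load-bearing step.

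Neither of the two repairs you sketch actually closes this gap. The \v{C}ech version has a long exact sequence because $\varprojlim(\check C_{\underline x,J}\otimes M/\mathfrak{a}^nM)$ is, up to the $\mathfrak{a}$-adic completion of $M$, the tensor product of a flat complex with an exact completion functor; for $\mathfrak{F}^i=\varprojlim H^i_{\mathfrak{m},J}(M/\mathfrak{a}^nM)$ one must push a short exact sequence through the non-exact functor $M\mapsto M/\mathfrak{a}^nM$, then through local cohomology, then through $\varprojlim$, and there is no reason this composite is $\delta$-functorial. Your first fix (Mittag--Leffler) does not apply automatically: the modules $H^i_{\mathfrak{m},J}(M/\mathfrak{a}^nM)$ need not be Artinian when $J$ is non-nilpotent---the paper itself cites Tehranian--Talemi \cite[Theorem~2.10]{nonart} for precisely this failure---so the hypotheses that would guarantee $\varprojlim^1=0$ are not available. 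Your second fix (transferring the long exact sequence from $\mathfrak{\check F}$ to $\mathfrak{F}$ via \cite[Proposition~2.4]{paper}) also does not work as stated: that proposition gives, for each fixed module $N$, a short exact sequence of the shape $0\to\varprojlim^1 H^{i-1}_{\mathfrak{m},J}(N/\mathfrak{a}^nN)\to\mathfrak{\check F}^i(N)\to\mathfrak{F}^i(N)\to 0$, and mapping the $\mathfrak{\check F}$-long-exact-sequence termwise onto its $\mathfrak{F}$-quotients does not produce an exact sequence in $\mathfrak{F}$ unless the $\varprojlim^1$ terms are controlled. In short, the transcription stalls at the very point you flag. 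The paper's actual route avoids all of this: it runs the induction only once, for $\mathfrak{\check F}$ (where the long exact sequence is available), and then uses the surjection $\mathfrak{\check F}^i\twoheadrightarrow\mathfrak{F}^i$ from Proposition~2.4 to pass the conclusions to $\mathfrak{F}$. If you want to complete your argument, the cleanest repair is to replace the mutatis-mutandis transcription by exactly this deduction from Theorem~\ref{final}; your treatment of $(1)\Rightarrow(2)$, $(3)\Rightarrow(4)$, the use of \cite[Theorem~4.2]{paper} in $(1)\Rightarrow(3)$, and the final non-finiteness statement are all fine and carry over unchanged.
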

\begin{proof} This result follows by Theorem \ref{final} and the exact sequence in \cite[Proposition 2.4]{paper}. The second statement follows by the same idea of previous corollary.
\end{proof}

\begin{thm} Let $\mathfrak{a}$ and $J$ be two ideals of $(R,\mathfrak{m})$. Let $M$ be a finitely generated $R$-module such that  $\Gamma_{\mathfrak{a}}(M)$ is $J$-torsion. Then ${\rm Hom}(R/\mathfrak{m}, \mathfrak{\check{F}}^{t}_{\mathfrak{a},\mathfrak{m},J}(M))$ is finitely generated, where $t={\rm f{\check f}-depth}(\mathfrak{a},\mathfrak{m},J,M).$

\end{thm}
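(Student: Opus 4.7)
Setting $N := \check{\mathfrak{F}}^{t}_{\mathfrak{a},\mathfrak{m},J}(M)$, my starting point is the elementary identity
\[
\mathrm{Hom}_{R}(R/\mathfrak{m},\,N) \;=\; \mathrm{Hom}_{R}(R/\mathfrak{m},\,(0:_{N}x)), \qquad x \in \mathfrak{m},
\]
which holds because every element of $\mathrm{Soc}(N)=(0:_N\mathfrak{m})$ is already annihilated by $x$. The plan is to exploit this identity together with an induction on $d = \dim M$, in the spirit of the proof of Theorem~\ref{artquoc}, using an $M$-regular element $x \in \mathfrak{a}$ to access the lower-dimensional module $M/xM$.

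The base case $d = 0$ is immediate: $M$ is Artinian, so $\check{\mathfrak{F}}^{i}(M) = 0$ for $i > 0$ by \cite[Proposition~4.1]{paper} and $\check{\mathfrak{F}}^{0}(M) \cong \mathfrak{F}^{0}(M)$ is a submodule of the Noetherian $\mathfrak{a}$-adic completion $\widehat{M}$, so its socle is automatically finitely generated. For the inductive step I would first reduce to $\Gamma_{\mathfrak{a}}(M) = 0$. Since $\Gamma_{\mathfrak{a}}(M)$ is both $\mathfrak{a}$-torsion and, by hypothesis, $J$-torsion, one has $\check{\mathfrak{F}}^{i}(\Gamma_{\mathfrak{a}}(M)) \cong H^{i}_{\mathfrak{m}}(\Gamma_{\mathfrak{a}}(M))$, which is Artinian for every $i$. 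Plugging this into the long exact sequence of \cite[Theorem~3.4]{paper} arising from $0 \to \Gamma_{\mathfrak{a}}(M) \to M \to M/\Gamma_{\mathfrak{a}}(M) \to 0$ shows that the ${\rm f\check{f}}$-depth is preserved and that the finite generation of $\mathrm{Hom}_{R}(R/\mathfrak{m},\check{\mathfrak{F}}^{t}(-))$ transfers between $M$ and $M/\Gamma_{\mathfrak{a}}(M)$.

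Assuming $\Gamma_{\mathfrak{a}}(M) = 0$, prime avoidance furnishes an $M$-regular $x \in \mathfrak{a}$, and the long exact sequence attached to $0 \to M \xrightarrow{x} M \to M/xM \to 0$ yields the short exact sequence
\[
0 \to C \to \check{\mathfrak{F}}^{t-1}_{\mathfrak{a},\mathfrak{m},J}(M/xM) \to (0 :_{N} x) \to 0, \qquad (\ast)
\]
where $C$ is a quotient of $\check{\mathfrak{F}}^{t-1}_{\mathfrak{a},\mathfrak{m},J}(M)$, hence Artinian by the very definition of $t$. Inspection of the same sequence also gives ${\rm f\check{f}}\text{-depth}(\mathfrak{a},\mathfrak{m},J,M/xM) \geq t-1$, so by the inductive hypothesis applied to $M/xM$ (together with the trivial subcase where $\check{\mathfrak{F}}^{t-1}(M/xM)$ is itself Artinian), $\mathrm{Hom}_{R}(R/\mathfrak{m},\check{\mathfrak{F}}^{t-1}(M/xM))$ is finitely generated. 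Applying $\mathrm{Hom}_{R}(R/\mathfrak{m},-)$ to $(\ast)$ produces
\[
0 \to \mathrm{Soc}(C) \to \mathrm{Hom}_{R}(R/\mathfrak{m},\check{\mathfrak{F}}^{t-1}(M/xM)) \to \mathrm{Hom}_{R}(R/\mathfrak{m},(0:_{N}x)) \to \mathrm{Ext}^{1}_{R}(R/\mathfrak{m},C),
\]
whose outer terms are finitely generated -- in particular $\mathrm{Ext}^{1}_{R}(R/\mathfrak{m},C)$ is an $R/\mathfrak{m}$-vector space that is a subquotient of $C^{n}$, hence Artinian and finite-dimensional. Consequently $\mathrm{Hom}_{R}(R/\mathfrak{m},(0:_{N}x))$ is finitely generated, and the opening identity closes the induction.

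The main obstacle I anticipate is the bookkeeping around the running hypothesis ``$\Gamma_{\mathfrak{a}}(M)$ is $J$-torsion'': this assumption is precisely what powers the isomorphism $\check{\mathfrak{F}}^{i}(\Gamma_{\mathfrak{a}}(-)) \cong H^{i}_{\mathfrak{m}}(\Gamma_{\mathfrak{a}}(-))$ and the application of Corollary~\ref{cor4.13}, but it is not obviously preserved on passing from $M$ to $M/xM$, since $\Gamma_{\mathfrak{a}}(M/xM) \cong (0 :_{H^{1}_{\mathfrak{a}}(M)} x)$ once $\Gamma_{\mathfrak{a}}(M)=0$. One must therefore either check that the hypothesis propagates in the cases actually used, strengthen the inductive statement to a form that does propagate, or reorganise the argument so as to avoid the quotient by a regular element altogether.
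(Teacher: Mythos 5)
Your argument follows essentially the same skeleton as the paper's proof: after reducing to $\Gamma_{\mathfrak a}(M)=0$ via the long exact sequence of \cite[Theorem~3.4]{paper}, take an $M$-regular element $x\in\mathfrak a$, extract from the long exact sequence attached to $0\to M\stackrel{x}{\to}M\to M/xM\to 0$ the short exact sequence
\[
0\longrightarrow \frac{\check{\mathfrak F}^{t-1}_{\mathfrak a,\mathfrak m,J}(M)}{x\,\check{\mathfrak F}^{t-1}_{\mathfrak a,\mathfrak m,J}(M)}\longrightarrow \check{\mathfrak F}^{t-1}_{\mathfrak a,\mathfrak m,J}(M/xM)\longrightarrow (0:_N x)\longrightarrow 0,
\]
apply ${\rm Hom}_R(R/\mathfrak m,-)$, and close with ${\rm Hom}_R(R/\mathfrak m,(0:_Nx))\cong{\rm Hom}_R(R/\mathfrak m,N)$. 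The differences are cosmetic: the paper inducts on $t={\rm f\check f}\!-\!{\rm depth}$ (base case $t=0$ handled by passing to the completion and observing $\check{\mathfrak F}^0\subset\widehat M$) rather than on $\dim M$; it cites \cite[Lemma~2.2]{Chu-Lizhong} for the finite generation of ${\rm Ext}^1_R(R/\mathfrak m,-)$ of an Artinian module where you argue directly via a minimal free resolution of $R/\mathfrak m$; and it leaves implicit the subcase you name, where $\check{\mathfrak F}^{t-1}_{\mathfrak a,\mathfrak m,J}(M/xM)$ is already Artinian and its socle is trivially finite-dimensional.

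The reservation you record at the end is legitimate and, importantly, is not cured in the paper's own proof either: the hypothesis that $\Gamma_{\mathfrak a}(M)$ is $J$-torsion powers the reduction $\check{\mathfrak F}^i_{\mathfrak a,\mathfrak m,J}(\Gamma_{\mathfrak a}(M))\cong H^i_{\mathfrak m}(\Gamma_{\mathfrak a}(M))$, yet the inductive step passes to $M/xM$ and invokes the inductive hypothesis without verifying that $\Gamma_{\mathfrak a}(M/xM)\cong(0:_{H^1_{\mathfrak a}(M)}x)$ is $J$-torsion. You are right that an airtight version of the argument needs either a check that this condition propagates, a strengthened inductive statement that does propagate, or a reorganization avoiding the regular-element quotient. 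So your proposal reproduces the paper's route faithfully, gap included, and has the additional merit of having located that gap.
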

\begin{proof}
Use induction on $t$. When $t=0$, since $\mathfrak{\check{F}}^0_{\mathfrak{a},\mathfrak{m},J}(M)\cong \mathfrak{\check{F}}^0_{\mathfrak{a}{\hat R},\mathfrak{m}{\hat R},J{\hat R}}({\hat M})$ and $\dim (M/{\mathfrak a}M)=\dim({\hat M}/{\mathfrak a}{\hat R}{\hat M})$, we may assume that $M$ is complete in $\mathfrak m$-adic topology. So, $M$ is also complete in $\mathfrak a$-adic topology. Hence $\mathfrak{\check{F}}^0_{\mathfrak{a},\mathfrak{m},J}(M)=H^0(\displaystyle \lim_{ ^{\longleftarrow} }(\check{C}_{\underline{x}, J}\otimes  M / \mathfrak{a}^n M))\subset \lim_{ ^{\longleftarrow} }(M/{\mathfrak a}^nM)=M.$ This implies that $\mathfrak{\check{F}}^0_{\mathfrak{a},\mathfrak{m},J}(M)$  is finitely generated as an $\hat R$-module, then ${\rm Hom}({\hat R}/\mathfrak{{\hat m}}, \mathfrak{\check{F}}^0_{\mathfrak{a},\mathfrak{m},J}(M))$ is finitely generated as a $\hat R$-module. Moreover ${\rm Hom}({\hat R}/\mathfrak{{\hat m}}, \mathfrak{\check{F}}^0_{\mathfrak{a},\mathfrak{m},J}(M))$ is a $R/{\mathfrak m}$-vector space of finite dimension.

\noindent On the other hand, since $ \mathfrak{\check{F}}^0_{\mathfrak{a},\mathfrak{m},J}(M)$ has a structure as an $\hat R$-module, we have the following isomorphism:
$$
\begin{array}{lll}
{\rm Hom}_{\hat R}({\hat R}/\mathfrak{{\hat m}}, \mathfrak{\check{F}}^0_{\mathfrak{a},\mathfrak{m},J}(M))&\cong & {\rm Hom}_{\hat R}(R/\mathfrak{m}\otimes{\hat R}, \mathfrak{\check{F}}^0_{\mathfrak{a},\mathfrak{m},J}(M))\\
&\cong & {\rm Hom}_R(R/\mathfrak{m}, {\rm Hom}_{\hat R}({\hat R}, \mathfrak{\check{F}}^0_{\mathfrak{a},\mathfrak{m},J}(M)))\\
&\cong & {\rm Hom}_R(R/\mathfrak{m}, \mathfrak{\check{F}}^0_{\mathfrak{a},\mathfrak{m},J}(M)).
\end{array}
$$

\noindent Thus, ${\rm Hom}_R(R/\mathfrak{m}, \mathfrak{\check{F}}^0_{\mathfrak{a},\mathfrak{m},J}(M))$ is finitely generated.

 Next, we suppose that $t>0$ and that the claim has been proved for smaller values of $t$. From the exact sequence

 $$0\rightarrow  \Gamma_{\mathfrak{a}}(M)\rightarrow M \rightarrow M/\Gamma_{\mathfrak{a}}(M)\rightarrow 0,$$

\noindent by \cite[Theorem 3.4]{paper} we have the long exact sequence
$$\cdots \rightarrow \mathfrak{\check{F}}^i_{\mathfrak{a},\mathfrak{m},J}(\Gamma_{\mathfrak{a}}(M))\rightarrow \mathfrak{\check{F}}^i_{\mathfrak{a},\mathfrak{m},J}(M)\rightarrow \mathfrak{\check{F}}^i_{\mathfrak{a},\mathfrak{m},J}( M/\Gamma_{\mathfrak{a}}(M))\rightarrow \mathfrak{\check{F}}^{i+1}_{\mathfrak{a},\mathfrak{m},J}(\Gamma_{\mathfrak{a}}(M))\rightarrow \cdots.$$

Also, by assumption we have

$$
\begin{array}{lll}
\mathfrak{\check{F}}^i_{\mathfrak{a},\mathfrak{m},J}(\Gamma_{\mathfrak{a}}(M))&=& H^i(\displaystyle \lim_{ ^{\longleftarrow} }(\check{C}_{\underline{x}, J}\otimes  \Gamma_{\mathfrak{a}}(M) / \mathfrak{a}^n \Gamma_{\mathfrak{a}}(M))\\
&\cong & H^i_{\mathfrak{m},J}(\Gamma_{\mathfrak{a}}(M))\\
&\cong & H^i_{\mathfrak{m}}(\Gamma_{\mathfrak{a}}(M)),
\end{array}
$$
\noindent it follows that for some integer $s$, $\mathfrak{\check{F}}^i_{\mathfrak{a},\mathfrak{m},J}(M)$ is Artinian for all $i<s$ if, and only if,  $\mathfrak{\check{F}}^i_{\mathfrak{a},\mathfrak{m},J}( M/\Gamma_{\mathfrak{a}}(M))$.

\noindent From the exact sequences (\ref{equation3}) and (\ref{equation4}), we have the following exact sequence
$$
\cdots \rightarrow {\rm Hom}_R(R/{\mathfrak m},{\rm Ker}{\alpha})\rightarrow {\rm Hom}_R(R/{\mathfrak m},\mathfrak{\check{F}}^i_{\mathfrak{a},\mathfrak{m},J}(M)) \rightarrow {\rm Hom}_R(R/{\mathfrak m},{\rm Im}{\alpha})
$$
$$\rightarrow {\rm Ext}^1_R(R/{\mathfrak m},{\rm Ker}{\alpha}) \rightarrow\cdots,$$

$$
\cdots \rightarrow {\rm Hom}_R(R/{\mathfrak m},{\rm Im}{\beta})\rightarrow {\rm Hom}_R(R/{\mathfrak m},\mathfrak{\check{F}}^i_{\mathfrak{a},\mathfrak{m},J}( M/\Gamma_{\mathfrak{a}}(M))) \rightarrow {\rm Hom}_R(R/{\mathfrak m},{\rm Im}{\beta}).
$$
\noindent Note that ${\rm Ker}\alpha$ and ${\rm Im}\beta$ are Artinian, then the above sequences yiels that ${\rm Hom}_R(R/{\mathfrak m},\mathfrak{\check{F}}^i_{\mathfrak{a},\mathfrak{m},J}(M))$ is finitely generated if and only if \\${\rm Hom}_R(R/{\mathfrak m},\mathfrak{\check{F}}^i_{\mathfrak{a},\mathfrak{m},J}( M/\Gamma_{\mathfrak{a}}(M)))$. Therefore, we may assume that $\Gamma_{\mathfrak{a}}(M)=0$. There exist an $M$-regular element $x\in {\mathfrak a}$.

 \noindent By \cite[Theorem 3.4]{paper}, the short exact sequence
 $$0 \rightarrow M\stackrel{x}{\rightarrow} M\rightarrow M/xM\rightarrow  0$$ induces the long exact sequence (\ref{equation5}). It yields that $\mathfrak{\check{F}}^{t-1}_{\mathfrak{a},\mathfrak{m},J}(M/xM)$ is Artinian for $i<t-1$. So by the inductive hypothesis, ${\rm Hom}_R(R/{\mathfrak m},\mathfrak{\check{F}}^{t-1}_{\mathfrak{a},\mathfrak{m},J}(M/xM))$  is finitely generated. On the other hand, the exact sequence

 $$0 \rightarrow \frac{\mathfrak{\check{F}}^{t-1}_{\mathfrak{a},\mathfrak{m},J}(M)}{x\mathfrak{\check{F}}^{t-1}_{\mathfrak{a},\mathfrak{m},J}(M)} \rightarrow \mathfrak{\check{F}}^{t-1}_{\mathfrak{a},\mathfrak{m},J}(M/xM)\rightarrow (0:_{\mathfrak{\check{F}}^t_{\mathfrak{a},\mathfrak{m},J}(M)}x)\rightarrow  0$$

 \noindent induces the exact sequence

$
\begin{array}{lll}
{\rm Hom}_R(R/{\mathfrak m},\mathfrak{\check{F}}^{t-1}_{\mathfrak{a},\mathfrak{m},J}(M/xM))& \rightarrow &{\rm Hom}_R(R/{\mathfrak m},(0:_{\mathfrak{\check{F}}^t_{\mathfrak{a},\mathfrak{m},J}(M)}x))\\
&&\rightarrow {\rm Ext}^1_R\left(R/{\mathfrak m},\frac{\mathfrak{\check{F}}^{t-1}_{\mathfrak{a},\mathfrak{m},J}(M)}{x\mathfrak{\check{F}}^{t-1}_{\mathfrak{a},\mathfrak{m},J}(M)}\right)
\end{array}
$

\noindent Since $\frac{\mathfrak{\check{F}}^{t-1}_{\mathfrak{a},\mathfrak{m},J}(M)}{x\mathfrak{\check{F}}^{t-1}_{\mathfrak{a},\mathfrak{m},J}(M)}$ is Artinian, ${\rm Ext}^1_R\left(R/{\mathfrak m},\frac{\mathfrak{\check{F}}^{t-1}_{\mathfrak{a},\mathfrak{m},J}(M)}{x\mathfrak{\check{F}}^{t-1}_{\mathfrak{a},\mathfrak{m},J}(M)}\right)$ is finitely generated by  \cite[Lemma 2.2]{Chu-Lizhong}.

\noindent Then we obtain that ${\rm Hom}_R(R/{\mathfrak m},(0:_{\mathfrak{\check{F}}^t_{\mathfrak{a},\mathfrak{m},J}(M)}x))$ is finitely generated. Since $x\in \mathfrak m$, we have

$$
\begin{array}{lll}
{\rm Hom}_R(R/{\mathfrak m},(0:_{\mathfrak{\check{F}}^t_{\mathfrak{a},\mathfrak{m},J}(M)}x))&\cong & {\rm Hom}_R(R/{\mathfrak m}\otimes R/xR,{\check{\mathfrak F}}^t_{\mathfrak{a},\mathfrak{m},J}(M))\\
&\cong & {\rm Hom}_R(R/{\mathfrak m},{\check{\mathfrak F}}^t_{\mathfrak{a},\mathfrak{m},J}(M))
\end{array}
$$

\noindent therefore ${\rm Hom}_R(R/{\mathfrak m},{\check{\mathfrak F}}^t_{\mathfrak{a},\mathfrak{m},J}(M))$ is finitely generated.

\end{proof}

\noindent {\it Acknowledgements:}  The authors would like to thank by Professors B. Ulrich and G. Caviglia for some conversations, and by hospitality for the Department of Mathematics-Purdue University.


\begin{thebibliography}{PTW02}

\bibitem{iranultimo} M. Aghapournahr, KH. Ahmadi-Amoli and M.Y. Sadegui, \emph{Cofiniteness and Artinianness of certain local cohomology
modules}, Ricerche di Matematica (accepted), (2015).

\bibitem{finiteness} M. Asgharzadeh and K. Divaani-Aazr, \emph{Finiteness Properties of Formal Local Cohomology modules and Cohen-Macaulayness}, Com. Algebra, \textbf{39-3} (2011), 1082-1103.

\bibitem{ffdepth1}M. H. Bijan-Zadeh and S. Rezaei, \emph{Artinianness and Attached Primes of Formal Local Cohomology Modules}, Algebra Colloquium, \textbf{21}:2 (2014), 307-316.

\bibitem{Bourbaki} N. Bourbaki, \emph{Alg\'ebre commutative}, Hermann, Paris, 1961-1965.

\bibitem{grot} M. P. Brodmann and R. Y. Sharp, \emph{Local cohomology- an algebraic introduction with geometric applications}, Cambridge University Press, 1998.

\bibitem{artop} L. Chu, \emph{Top local cohomology modules with respect to a pair of ideals} Proc. Amer. Math. Soc., \textbf{139-3}
  (2010), 777-782.


\bibitem{Chu-Lizhong} L. Chu, \emph{Some results of formal local cohomology modules} Comunication in Math. Research, \textbf{26}(1)
  (2010), 1-21

\bibitem{art3}
 L. Chu, and Q. Wang,  \emph{Some results on local cohomology modules defined by a pair of ideals} J.Math. Kyoto Univ, \textbf{49-1}
  (2009), 193-200.

\bibitem{Div} K. Divaani- Aazr, R. Naghipour and M. Touse, \emph{Cohomological dimensionof certain algebraic varieties}, Proc. Amer. Math. Soc., \textbf{130} (2002), 3537-3544.

\bibitem{6} K. Divaani- Aazr and P. Schenzel, \emph{Ideal Topology, local cohomology and connectedness}, Math. Proc. Cambridge philos. Soc., \textbf{131} (2001), 211-226.



\bibitem{majidp} M. Eghbali, \emph{On Artinianness of Formal Local Cohomology, Colocalization and Coassociated Primes}, Math. Scand., \textbf{113} (2013), 5-19.

\bibitem{majid} M. Eghbali, \emph{On Formal local cohomology, colocalization and endomorphism ring of top local cohomology modules}, (2011)	Thesis, Universitat Halle-Wittenberg.


\bibitem{faltings} G. Faltings, \emph{Algebraization of some formal vector bundles}, Ann. of Math. 110 (1979) 501-514.

\bibitem{paper} T.H. Freitas and V.H. Jorge Perez , \emph{On Formal Local Cohomology with Respect to a Pair of Ideals}, J. Commut. Algebra, (accepted), (2015).

\bibitem{Hart}A. Grothendieck,  \emph{Local Cohomology}, Notes by R. Hartshorne, Lecture Notes in Math., vol 20, Springer, 1966.

\bibitem{yan} Gu. Y, \emph{ The Artinianness of Formal Local Cohomology Modules}  Bull. Malays. Math. Sci. Soc., \textbf{2}  (2014), no. 2, 449–456.

\bibitem{herzog} J. Herzog, \emph{Komplexe}, Aufl\"{o}sungen und Dualit\"{a}t in der lokalen Algebra, Habilitationsschrift, Universit\"{a}t Regensburg, 1970.

\bibitem{10} M. Hellus and J. Stuckrad, \emph{Artiniannes of local cohomology }, J. Commut.Algebra, \textbf{1} (2009), 269-274.

\bibitem{huneke} C. Huneke, \emph{Problems on local cohomology, Free resolutions in commutative algebra and algebraic geometry}, (Sundance, UT, 1990), Jones and Bartlett, (1992), 93-108.


\bibitem{24h} S. B. Iyengar, G.J. Leuschke, A. Leykin, C. Miller, E. Miller, A.K. Singh, and U. Walther,\emph{ Twenty-Four
Hours of Local Cohomology}, Graduate Studies in Mathematics 87, Amer. Math. Soc. (2007).




\bibitem{13} A. Mafi, \emph{Some results on the local cohomology modules}, Arch. Math. (Basel), \textbf{87} (2006), 211-216.

\bibitem{131} A. Mafi, \emph{Results on formal local cohomology modules}, Bull. Malays. Math. Sci. Soc, (2) \textbf{36}, nÂº 1, (2013), 173-177.

\bibitem{14} T. Marley and J.C Vassilev, \emph{Local cohomology modules with infinite dimension socles}, Proc. Amer. Math. Soc , \textbf{132} (2004), 3485-3490.

\bibitem{16} L. Melkerson, \emph{Some applications of a criterion of artiniannes of a module}, J. Pure and Applied Algebra, \textbf{101} (1995), 291-303.

\bibitem{melschen} L. Melkerson and P. Schenzel, \emph{The co-localization of an Artinian module}, Proc. Endinburgh Math. Soc., \textbf{38} (1995), 121-131.


\bibitem{parsa} Sh. Payrovi and M. Lotfi Parsa, \emph{ Artinianness of local cohomology modules defined by a pair of ideals}, Bull. Malays. Math. Sci. Soc. \textbf{2} 35 (2012), no. 4, 877–883.

\bibitem{18} C. Peskine and L. Szpiro , \emph{Dimension projective finie et cohomologie locale}, Publ. Math. I.H.E.S, \textbf{42} (1972), 47-119.

\bibitem{brod}J. Rotman, \emph{An introduction to homological algebra}, Second Edition, Academic Press, Orlando, FL, 1979.


\bibitem{art0} P. Schenzel,\emph{ On formal local cohomology and connectedness}, J.Algebra. \textbf{315} (2007) 897-923.

\bibitem{notas0}
P. Schenzel,  \emph{On the use of local cohomology in Algebra and Geometry}, Notes.




\bibitem{art1}
T. Takahashi, Y. Yoshino, and  T. Yoshizawa, \emph{Local cohomology based on a nonclosed support defined by a pair of ideals},J. Pure Appl. Algebra, \textbf{213},(2009), 582-600.

\bibitem{nonart}A.Tehranian and A.P.E Talemi, \emph{Non-Artinian Local Cohomology with Respect to a Pair of Ideals}, A. Colloquium, \textbf{20}: 4 (2013), 637-642.

\bibitem{yassemi}S. Yassemi, \emph{Coassociated primes}, Comm. Algebra, \textbf{23} (4), (1995), 1473-1498.

\bibitem{Weibel}
C. A, Weibel , \emph{An introduction to homological algebra}, Cambridge University Press, (1994).

\end{thebibliography}
\end{document}